\renewcommand*\libertine@figurestyle{LF}
\renewcommand*\libertine@figurestyle{OsF}
\definecolor{green}{RGB}{0,127,0}
\definecolor{red}{RGB}{191,0,0}
\theoremstyle{plain}
\newtheorem{theorem}{Theorem}[section]
\newtheorem{lemma}[theorem]{Lemma}
\newtheorem{corollary}[theorem]{Corollary}
\newtheorem{proposition}[theorem]{Proposition}
\newtheorem{definition-lemma}[theorem]{Definition-Lemma}
\theoremstyle{definition}
\newtheorem{definition}[theorem]{Definition}
\newtheorem{remark}[theorem]{Remark}
\theoremstyle{plain}
\newtheorem{introthm}{Theorem}
\newcommand{\hh}{\mathbf{h}}
\newcommand{\pp}{\mathbf{p}}
\newcommand{\PP}{\mathbf{P}}
\newcommand{\qq}{\mathbf{q}}
\newcommand{\QQ}{\mathbf{Q}}
\newcommand{\xx}{\mathbf{x}}
\newcommand{\uu}{\mathbf{u}}
\newcommand{\g}{\mathbf{g}}
\newcommand{\Z}{\mathbb{Z}}
\newcommand{\Q}{\mathbb{Q}}
\newcommand{\R}{\mathbb{R}}
\newcommand{\CP}{{\mathbb{C}\mathbb{P}}}
\newcommand{\J}{\mathsf{J}}
\newcommand{\HurGbg}{H^{(b)}_{G;g}}
\newcommand{\HurGbbg}{H^{(\bb)}_{G;g}}
\newcommand{\HurGg}{H^{(0)}_{G;g}}
\newcommand{\la}{\lambda}
\newcommand{\dz}[1]{\mathsf{d}z_{#1}}
\newcommand{\dxz}[1]{\mathsf{d}x(z_{#1})}
\newcommand{\al}{{(\alpha)}}
\newcommand{\bb}{\mathfrak{b}}
\newcommand{\FI}{\mathcal{F}_<}
\newcommand{\splus}{\!+\!}
\newcommand{\sminus}{\!-\!}
\DeclareMathOperator{\Aut}{Aut}
\DeclareMathOperator{\Id}{Id}
\DeclareMathOperator{\hook}{hook}
\DeclareMathOperator{\wt}{wt}
\DeclareMathOperator{\supp}{supp}
\DeclareMathOperator{\Ad}{Ad}
   \def\MR#1{}
\title[$b$-Hurwitz numbers from Whittaker vectors for $\mathcal{W}$-algebras]{$b$-Hurwitz numbers from Whittaker vectors for $\mathcal{W}$-algebras}
	\author[N. K. Chidambaram]{Nitin K. Chidambaram}
\address{
	School of Mathematics, University of Edinburgh, James Clerk Maxwell Building, Peter Guthrie Tait Rd, Edinburgh EH9 3FD, U.K. %
}
\address{
  \textit{  Present address:} Departamento de Matem\'aticas Fundamentales, UNED,
    Calle de Juan del Rosal, 28040 Madrid, Spain. 
}
\email{nitin.chidambaram@mat.uned.es}
\author[M.~Dołęga]{Maciej Dołęga}
\address{
Institute of Mathematics, 
Polish Academy of Sciences, 
ul. Śniadeckich 8, 
00-956 Warszawa, Poland.
}
\email{mdolega@impan.pl}
\author[K.~Osuga]{Kento Osuga}
\address{Graduate School of Mathematical Sciences, University of Tokyo, 3-8-1 Komaba, Meguro, Tokyo, 153-0041, Japan}
\address{
\textit{  Present address:} Graduate School of Mathematics \& Kobayashi--Maskawa Institute for the Origin of Particles and the Universe, Nagoya University, Furocho, Nagoya, Aichi, 464-8601, Japan}
\email{osuga@math.nagoya-u.ac.jp}
\begin{document}
\emergencystretch 3em 
\begin{abstract}
We show that rationally weighted $b$-Hurwitz numbers  are obtained by taking an explicit limit of a Whittaker vector for the $\mathcal{W}$-algebra of type $A$. As a consequence, we show that the generating function satisfies an infinite set of finite degree differential operators that determines it uniquely.  We provide  an interpretation of the associated Whittaker vector in terms of generalized branched coverings that might be of independent interest. Our result is new even in the special case $b=0$ that corresponds to classical hypergeometric Hurwitz numbers, and implies that they are governed by the Eynard--Orantin topological recursion. This gives an independent proof of the recent result of Bychkov--Dunin-Barkowski--Kazarian--Shadrin.
\end{abstract}

\maketitle

\tableofcontents

\vspace{-.9 cm}
\section{Introduction}\label{sec:Introduction}

Hurwitz theory, in its simplest form, counts
branched coverings of algebraic curves. First
considered in 1891 by Hurwitz~\cite{Hurwitz1891} who showed that Hurwitz numbers
can be expressed in terms of counting permutation factorizations,  Hurwitz
theory has recently been found to have  fascinating connections to
integrable hierarchies, Gromov–-Witten theory, and topological
recursion \cite{EkedahlLandoShapiroVainshtein2001,OkounkovPandharipande2006,GouldenJackson2008,BouchardMarino2008}. In the last two decades, various
generalizations of simple Hurwitz numbers, including Grothendieck dessins
d'enfants/constellations~\cite{BousquetSchaeffer2000}, monotone Hurwitz numbers
\cite{GouldenGuayPaquetNovak2014} and orbifold Hurwitz
numbers~\cite{JohnsonPandharipandeTseng2011},  have been 
studied in many different contexts. 

An approach unifying all these different classes was introduced in \cite{Guay-PaquetHarnad2017} which defined $G$-weighted Hurwitz
  numbers $\HurGg(\mu_1,\dots,\mu_n)$, where the weight $G(z):= \sum_{i \geq 0}g_i z^i$
is a formal power series. The generating function of weighted Hurwitz numbers is a
tau-function of hypergeometric type of
the KP (or more generally the $2$-Toda) integrable hierarchies~\cite{OrlovScherbin2000}, and  is
given by the following explicit formula: 
\begin{equation}
  \label{eq:TauBGWeight}
 \tau_G^{(0)} := \sum_{\substack{n \geq 0,\\\lambda \vdash n}} 
    \check{s}_\lambda(\pp)\prod_{\square
                  \in \lambda}\frac{G(\hbar\cdot c(\square))}{\hbar}
                = \exp\left(\sum_{\substack{g
          \in
          \Z_{\geq 0},\\ n \in \Z_{\geq
            1}}}\hbar^{2g-2+n}\sum_{\mu \colon \ell(\mu)
          = n}\HurGg(\mu_1,\dots,\mu_n)
          \cdot p_\mu\right),
      \end{equation}
      where $\check{s}_\lambda(\pp)$ is an appropriately normalized
      Schur function expressed in the basis of power-sum symmetric functions $\pp = (p_k)_{k \geq
      1}$, and $c(\square) := x-y$ is
      the content of the box $\square = (x,y)$.

      \subsection{$b$-deformed Hurwitz numbers}
      \label{sub:b-Hur}

      In this paper, we are interested in a refinement of  Hurwitz theory, known as \emph{$b$-Hurwitz numbers} defined by Chapuy and the second-named author \cite{ChapuyDolega2022}. $G$-weighted $b$-Hurwitz numbers $\HurGbg (\mu_1,\dots,\mu_\ell)$ are  a  one-parameter deformation of classical weighted Hurwitz numbers, whose generating function is given by
\begin{equation*}
 \tau_G^{(b)} := \sum_{\substack{n \geq 0,\\\lambda \vdash n}} 
    \check{J}_\lambda^{(1+b)}(\pp)\prod_{\square
                  \in \lambda}\frac{G(\hbar\cdot c_{1+b}(\square))}{\hbar}
                = \exp\left(\sum_{\substack{g
          \in
         \frac12 \Z_{\geq 0},\\ n \in \Z_{\geq
            1}}}\frac{\hbar^{2g-2+n}}{1+b}\sum_{\mu \colon \ell(\mu)
          = n}\HurGbg(\mu_1,\dots,\mu_n)
          \cdot p_\mu\right),
      \end{equation*} where $c_{1+b}(x,y) := (b+1)\cdot x-y -b$ is the $b$-deformation of the content, and $\check{J}^{(1+b)}_\lambda(\pp)$ is an appropriately normalized Jack polynomial with the condition  $\check{J}_\lambda^{(1)}(\pp)= \check{s}_\lambda(\pp)$. In particular, $H^{(b)}_{G;g}$ agree with the $H^{(0)}_{G;g}$ of \eqref{eq:TauBGWeight} when $b=0$. The appearance of Jack functions deforming Schur functions away from $b=0$ is motivated by the analogous relation in matrix models that appears when one studies $\beta$-deformations (where $\beta  = \frac{1}{1+b}$).

      \cite{ChapuyDolega2022} provide an enumerative/combinatorial definition of  $b$-Hurwitz numbers that interpolates between classical (complex) Hurwitz theory (at $b=0$) and its non-orientable (real) version (at $b =1$). As a consequence, they prove that for any $\mu_1,\dots,\mu_n$, the quantity
      $|\mu|\cdot\HurGbg(\mu_1,\dots,\mu_n) \in \Z_{\geq 0}[b][g_0,g_1,\dots]$ is a polynomial in $b$ with non-negative integer coefficients. This statement was conjectured in several special cases previously. Weighted $b$-Hurwitz numbers  encompass many models appearing in mathematical physics, enumerative geometry, and
      combinatorics, including Jacobi, Laguerre and Gaussian $\beta$-ensembles \cite{BonzomChapuyDolega2023,Ruzza2023}, $\beta$-deformations of the Harish-Chandra/Itzykson--Zuber (HCIZ)
      integral and the Br\'ezin--Gross--Witten integral \cite{BonzomChapuyDolega2023}.

      \subsection{(Decoupled) cut-and-join equations}
 
     One of the main tools in understanding the structure of classical
     Hurwitz numbers are cut-and-join equations that determine $\HurGg$
     uniquely. In examples where
     the weight $G(z)$ is relatively simple, one can understand these
     equations in terms of the Jucys--Murphy elements that further lead
     to methods employing the representation theory of the
     symmetric group and so-called semi-infinite wedge representations (see for
     instance~\cite{BorotDoKarevLewanskiMoskovsky2023} and references therein).
     These techniques can then be used to prove various structural properties of Hurwitz numbers --- for instance,  the Eynard--Orantin topological
     recursion \cite{EynardMulaseSafnuk2011,DoDyerMathews2017}, polynomiality and 
     ELSV-type formulas \cite{GouldenJacksonVakil2005,BorotDoKarevLewanskiMoskovsky2023}.

 On the other hand, finding cut-and-join equations for arbitrary $b$ is fundamentally more challenging than for $b=0$, as the aforementioned tools, such as permutation factorizations, the boson-fermion correspondence, and the semi-infinite wedge representations, do not extend to generic $b$. The approach initiated in~\cite{ChapuyDolega2022} is to construct  a family of operators (directly on the bosonic side) whose action on Jack polynomials is well controlled, and to relate their combinatorial properties to the properties of generalized branched coverings. We pursue this approach further by combining the operators constructed in~\cite{ChapuyDolega2022} with another remarkable family of operators constructed by Nazarov and Sklyanin in~\cite{NazarovSklyanin2013}, and by systematically analyzing the interplay of their algebraic structures. This enables us to derive a family of cut-and-join equations for weighted $b$-Hurwitz numbers of finite degree for rational weights $G$. In fact, we go further and decouple the cut-and-join equations into an infinite set of finite order differential constraints that uniquely determine the generating function $\tau^{(b)}_G$. 

     We obtain the following set of differential constraints on $\tau^{(b)}_G$, which can be interpreted as coming from a set of Virasoro/$\mathcal W$-constraints (see \cref{sec:introW} below).
    \begin{introthm} \label{thm:introcutandjoin} The generating function  $\tau^{(b)}_G$ of
        $b$-Hurwitz numbers weighted by $G(z) =
        \frac{\prod_{i=1}^p\left(P_i+
            z\right)}{\prod_{i=1}^q\left(Q_i-z\right)}  $, for any
        integers $p,q \geq 0$, is uniquely determined by the following differential operator constraints\footnote{In the body of the paper, we work with a  reparametrization of the generating function $\tau^{(b)}_G$, which is denoted by $\tau^{(\bb)}_G$. The shift $\hbar \to \hbar \sqrt{1+b}$ appearing in equation \eqref{eq:tauconstraints} is a consequence of this reparametrization.}:
    \begin{equation}\label{eq:tauconstraints}
       \Biggl( \sum_{\gamma \in \Gamma^{\geq 0}_{(0,k)\to (p,0)}} \widetilde{\wt}\left(\gamma| \left(\mathbf P \right)\right)  
      + (-1)^{q+1} \sum_{\gamma \in \Gamma^{\geq 0}_{(0,k)\to (q+1,-1)}}\widetilde{\wt}\left(\gamma|\left(-\mathbf Q,0\right) \right) \Biggr) \Biggl|_{\hbar \to \hbar \sqrt{1+b}}  \tau^{(b)}_{G} = 0, \,\,\,\, k \geq 0.
    \end{equation} The sums are over lattice paths $\gamma$ of a specific shape, and $\widetilde{\wt}(\gamma | \bullet)$ is an explicit differential operator in the power sums $(p_k)_{k \geq 1}$ associated to the path $\gamma$ (see \eqref{eq:tildewt}). In particular, \eqref{eq:tauconstraints} is an explicit differential equation of degree $\textup{max}(p,q+1)$  in the power sum variables. 
    \end{introthm} 
    For example, the rescaled operators \eqref{eq:tauconstraints} associated to the weight $G(z) = \frac{P+z}{Q-z}$ are quadratic and using the notation, for $k >0$,
    \begin{equation*}
      \J_{k} =  \hbar k (1+b) \cdot  \partial_{p_{k}}, \qquad \qquad \J_{-k} =   \hbar p_{k}, \qquad  \qquad \J_0 = 0,
    \end{equation*}
    are given explicitly as
    \[
          \left(\J_k + \delta_{k,0} P + \sum_{\ell \geq 0 } \J_{k-\ell} \J_{\ell+1} + (\hbar b k - Q)\J_{k+1}\right) \tau^{(b)}_{G} = 0, \qquad k \geq 0.
    \] 
    It is easy to check that at $\hbar = 1$,  the above equations are a set of Virasoro constraints (up to rescaling by $\sqrt{1+b}$).  When the weight $G(z) = \frac{(P_1+z)(P_2+z)}{(Q_1-z)(Q_2-z)}$, we have the following cubic constraints instead:
    \begin{multline*}
          \left(\sum_{\ell \geq 0 } \J_{k-\ell} \J_{\ell} + (P_1+P_2+\hbar b k)\J_{k}+ P_1P_2 \delta_{k,0}- \sum_{\ell,m \geq 0 } \J_{k-\ell} \J_{\ell-m} \J_{m+1} 
          \right.
          \\
          \left.
          + \sum_{\ell \geq 0} (Q_1+Q_2- \hbar b (k+ \ell))\J_{k-\ell}\J_{\ell+1} - (Q_1-\hbar b k)(Q_2-\hbar b k) \J_{k+1}\right) \tau^{(b)}_{G} = 0, \quad k \geq 0.
    \end{multline*}

     We note that the cut-and-join equations given in \cite{ChapuyDolega2022} are of infinite order for rational weights (decoupling those cut-and-join-type equations are
 highly non-trivial even in the  case of polynomial weight $G$, see
\cite{BonzomNador2023} for the case of cubic polynomial
 weight), while ours are of finite order and extend the  one derived in \cite{BonzomChapuyDolega2023} for monotone Hurwitz numbers to any rational weight. Multiplying \eqref{eq:tauconstraints} by $p_{k+1}$ on the left and summing over $k \geq 0$ gives our  cut-and-join equation (proved in \cref{thm:rationalweight}) in a familiar form.

\subsection{\texorpdfstring{$\mathcal W$}{W}-algebra representation} \label{sec:introW}
     The decoupling of the cut-and-join equation into an infinite set of differential constraints is reminiscent of how the cut-and-join equation is built out of Virasoro constraints in the monotone case \cite{BonzomChapuyDolega2023}. In fact, we prove \cref{thm:introcutandjoin} by unveiling the underlying structure of our cut-and-join equations as being closely related to the structure of a $\mathcal W$-algebra (a vertex algebra that generalizes the Virasoro algebra). An important insight is to use lattice paths as a combinatorial tool that simultaneously encodes  operators acting
on $\tau^{(b)}_G$ as well as the $\mathcal
W$-algebra structure. As a consequence we show that $\tau^{(b)}_G$ for any  rational weight $G(z)$ can be obtained from a  Whittaker vector for a $\mathcal W$-algebra of type $A$ via some simple substitutions and limits. 
     
      More concretely, we work with the principal $\mathcal W$-algebra  of  $\mathfrak{gl}_r $ at shifted level $\mathsf k + r-1 = -\frac{b}{\sqrt{1+b}} $, denoted $\mathcal W^{\mathsf
        k}(\mathfrak{gl}_r)$, and a specific representation of it  defined via \eqref{eq:Jrep}. Using Airy structure techniques (introduced in \cite{KontsevichSoibelman2018, BorotBouchardChidambaramCreutzigNoshchenko2024,BorotBouchardChidambaramCreutzig2024}), we construct a Whittaker vector $\mathcal Z$ for  $\mathcal W^{\mathsf k}(\mathfrak{gl}_r)$ satisfying the following constraints
     \begin{equation}\label{eq:Wconstintro}
     \mathsf W^i_k  \mathcal Z = \Omega_i \delta_{k,0} \mathcal Z\, \qquad \forall i \in [r], k \in \mathbb Z_{\geq 0},
     \end{equation} where $\big(\mathsf{W}^i_k\big)_{k\in \mathbb Z, i \in [r]}$ are the modes of a set of strong generators for $\mathcal W^{\mathsf k}(\mathfrak{gl}_r)$, and $\Omega_{i}$ is a certain function of the parameters appearing in the rational weight $G(z)$. Note that the vector $\mathcal Z$ is not the highest weight vector of the representation we are working in, but rather a Whittaker vector (see \cref{rem:Whittaker} for more details). An abbreviated version of one of our main results \cref{thm:taufromZ} is as follows.
     \begin{introthm}\label{theo:Main}
      Consider the generating function $ \tau^{(b)}_{G}$ of weighted $b$-Hurwitz numbers for an arbitrary rational weight, say $G(z) = \frac{\prod_{i=1}^p (P_i+z)}{\prod_{i=1}^q (Q_i-z)} $ for some $p,q\geq 0$. The function $ \tau^{(b)}_{G}$ is an (explicit) limit of the  Whittaker vector $\mathcal Z$ for $\mathcal W^{\mathsf k}(\mathfrak{gl}_r) $ where $ r = \max(p,q+2)$.
     \end{introthm} 
     Under this identification, the differential operator \eqref{eq:tauconstraints} annihilating $\tau^{(b)}_G$  is constructed out of the modes $\mathsf{W}^i_k$ of $\mathcal W^{\mathsf k}(\mathfrak{gl}_r)$, and \cref{thm:introcutandjoin} is a consequence of this result.

     A slight variant of \cref{theo:Main} states that the Gaiotto vector from the
        celebrated AGT conjecture~\cite{AldayGaiottoTachikawa2010, SchiffmannVasserot2013,MaulikOkounkov2019, BorotBouchardChidambaramCreutzig2024} is closely related to the generating function of certain weighted $b$-Hurwitz numbers, which might shed new light on the relation between Toda conformal blocks and the Nekrasov instanton partition function (see \cref{rem:AGT} for more details).

    \subsection{Topological recursion for $b=0$} \label{sec:introTR}
    
    In various contexts \cite{BorotBouchardChidambaramCreutzigNoshchenko2024, BorotBouchardChidambaramCreutzig2024}, it has been proved using the formalism of Airy structures that $\mathcal W$-constraints are closely related to the  topological recursion (TR) formalism, originally invented by
    Eynard and Orantin~\cite{EynardOrantin2007}. TR
    is a universal procedure which produces certain enumerative
    invariants  $\omega_{g,n}$, called correlators, for a fixed topology of genus $g$ with
    $n$ boundaries. These correlators are built recursively from the initial
    data of a spectral curve $(\Sigma,x,y,\omega_{0,2})$, 
    consisting of a Riemann surface $\Sigma$, two non-constant
    meromorphic functions $x,y$ on $\Sigma$, and a choice of a canonical
    symmetric bi-differential $\omega_{0,2}$ on $\Sigma \times \Sigma$.
    
\cite{BorotChidambaramUmer2025} proves that the  Whittaker vector
    $\mathcal Z$ at the self-dual level $\mathsf k = 1-r $, which
    corresponds to  $b=0$, can be
    computed using topological recursion on a certain spectral
    curve. Combining this with \cref{theo:Main}, and taking
    appropriate limits allow us to deduce that the classical
    rationally weighted Hurwitz numbers can be computed by TR, which was first proved in \cite{BychkovDuninBarkowskiKazarianShadrin2024}. This is one of the most important structural results in Hurwitz theory, and we provide a new and independent proof of it in \cref{sec:TR} from the $\mathcal{W}$-algebra perspective:

     \begin{introthm}
    	\label{theo:TRRational}
    	Consider the rational weight $G(z) = \frac{\prod_{i=1}^p
    		(P_i+z)}{\prod_{i=1}^q(Q_i-z)}$ for some $p,q \geq 0$. Let $\omega_{g,n}$ denote the correlators obtained from TR on the spectral curve $\left(\CP^1,x,\omega_{0,1} = y dx,\omega_{0,2}\right)$, with 
    	\[
    	x(z) = \frac{z}{G(z)}, \quad y(z) = G(z), \quad 	\omega_{0,2}(z_1,z_2) = \frac{\dz{1} \dz{2}}{(z_1-z_2)^2}.  \] Then, expanding the correlators $\omega_{g,n}(z_1,\ldots,z_n)$ near $z_1,\dots,z_n = 0$ gives $G$-weighted Hurwitz numbers:
    	\begin{equation*}
    		\label{eq:Correlators}
    		\omega_{g,n}(z_1,\dots,z_n) = \sum_{\mu_1,\dots,\mu_n \in \Z_{\geq
    				1}}\HurGg(\mu_1,\dots,\mu_n)|\Aut(\mu)|\prod_{i=1}^n\mu_ix_i(z)^{\mu_i-1}\dxz{i} \,.
    	\end{equation*}
    \end{introthm}
    Let us comment briefly on the history of this theorem. Before the breakthrough of \cite{AlexandrovChapuyEynardHarnad2020} where Hurwitz numbers weighted by arbitrary polynomials $G$ was proved to be governed by TR, each model of interest was treated case by case over a decade: \cite{BouchardMarino2008,EynardMulaseSafnuk2011} (simple
    Hurwitz numbers), \cite{DoDyerMatthews2017} (monotone),
    \cite{KramerPopolitovShadrin2022} (monotone orbifold) to name just
    a few. Finally, \cite{BychkovDuninBarkowskiKazarianShadrin2024} gave a uniform proof of a more general result, that in particular, covers the case of arbitrary rational weights $G$ given in \cref{theo:TRRational}.

    The existing proofs of \cref{theo:TRRational} rely heavily on the
    KP integrability of the generating function $\tau^{(0)}_G$ that
    is specific to the $b=0$ case and
    does not have even a conjectural extension to arbitrary $b$. In
    our proof, we use the underlying $\mathcal W$-algebra structure
    controlling Hurwitz theory for arbitrary $b$ as we prove in
    \cref{theo:Main}, and  the close relationship
    between  $\mathcal W$-algebras and the Eynard--Orantin topological
    recursion. 

    \subsection{Outlook} 
    The $\mathcal W$-algebra structure we uncover in this paper appears to be a fundamental structure underlying Hurwitz theory.  For arbitrary $b$, one may view this structure as a replacement for the  Fock space formalism and KP integrability, both of which only currently exist in the $b=0$ setting. Indeed, we demonstrate that cut-and-join-equations and topological recursion, which are key structural results in  Hurwitz theory, are consequences of this $\mathcal W$-algebra structure. Theorems \ref{thm:introcutandjoin} and \ref{theo:Main} are therefore significant steps towards understanding structural properties of $b$-Hurwitz numbers.

    Our results  raise the  natural question of what the correct generalization of the  Eynard--Orantin topological recursion that governs weighted $b$-Hurwitz numbers is. While the $\mathcal W$-algebra structure holds for any $b$, the analytic side that mimics the original construction of Eynard--Orantin is still under investigation. There are two differing approaches to extend the Eynard--Orantin topological recursion to arbitrary $b$. One approach,  developed in \cite{ChekhovEynardMarchal2011,BelliardEynard2019,BorotBouchardChidambaramCreutzig2024} is to replace the initial data of a spectral curve by a non-commutative spectral curve (essentially, a $D$-module on the spectral curve) and goes by the name of \textit{non-commutative topological recursion}. The underlying $\mathcal W$-algebra structure of this non-commutative topological recursion was explored thoroughly in  \cite{BorotBouchardChidambaramCreutzig2024} for a specific class of examples, which, in  Hurwitz theory corresponds to the case where $1/G(z)$ is a polynomial (see \cref{rem:AGT}). While it seems difficult to extract analytic properties of $b$-Hurwitz numbers directly from non-commutative TR, this connection  is an interesting problem that deserves further study.
    
    Another promising approach is the so-called \textit{refined topological recursion}. A refinement was first attempted by Chekhov and Eynard \cite{ChekhovEynard2006a} in the case of
    $\beta$-deformed matrix models, and its mathematical formulation for degree-$2$ curves was recently carried out by Kidwai and the third author in \cite{KidwaiOsuga2023,Osuga2024}. The initial data is upgraded to a refined spectral curve which comes with a new differential $\omega_{\frac12,1}$.  In subsequent work \cite{ChidambaramDolegaOsuga2026} we use our main result \cref{theo:Main} to prove that  refined TR computes the tau function of some interesting classes of weighted $b$-Hurwitz numbers, including the monotone case and the case of Gaussian, Laguerre and Jacobi $\beta$ ensembles.

\subsection{Notation} Given an integer $n \geq 1$, we will denote the set
$\{1,\dots,n\}$ by $[n]$, and given integers $2 \leq m \leq  n$, we
define the set $[m..n] := [n] \setminus [m-1]$ ($[0] :=
\emptyset$ by convention). We denote by $\FI(A,B)$ a
set of strictly increasing functions $f\colon A \rightarrow B$, i.e.~ $f(a) < f(a')$ for all $a < a'$. We use a bold face to indicate
that we are working with a family of indeterminates indexed by positive integers,
e.g.~ $\pp = (p_1,p_2,\dots), \g = (g_1,g_2,\dots), \xx =
(x_1,x_2,\dots)$, etc. For a ring $R$ we denote by $R[\xx]$, $R(\xx)$,
$R\llbracket \xx \rrbracket$, and $R(\!(\xx)\!)$ the polynomial, rational, formal
power series, and formal Laurent series, respectively, rings over $R$. For a function $F$ of the variables $\mathbf p$, we use the notation $[p_{i_1}\cdots p_{i_n}]F$ to denote the coefficient of
the monomial $p_{i_1}\cdots p_{i_n}$ in $F$.

            In this paper we work with a non-trivial
            reparametrization of $\tau_G^{(b)}$, which reveals more
            remarkable properties of $b$-Hurwitz numbers. We will
            denote the reparametrized function
     by $\tau_G^{(\bb)}$, where the main parameter is $\bb =
     \sqrt{\alpha}^{-1}-\sqrt{\alpha}$ (see~\eqref{eq:TauBGWeight'}). We summarize the relation between the four equivalent parameters $\alpha, b, \mathsf k, \bb$ that will appear in our paper:
\begin{equation}
	\mathfrak{b} = \sqrt{\alpha}^{-1}-\sqrt{\alpha}=-\frac{b}{\sqrt{1+b}}= \mathsf k + r - 1\label{eq:parameters}.
      \end{equation}
      
      \noindent We assume that the rational functions $G(z)$ appearing throughout the paper are presented as irreducible fractions.

\subsection*{Acknowledgements}

NKC acknowledges the support of the ERC Starting Grant 948885, and the Royal Society University Research Fellowship.  This work is part of grant RYC2023-042878-I, funded by MCIN/AEI/10.13039/501100011033 and by the European Social Fund Plus (FSE+). This research was funded in whole or in part by {\it Narodowe Centrum Nauki},
 grant 2021/42/E/ST1/00162. For the purpose of Open Access, the authors have applied a CC-BY public copyright licence to any
Author Accepted Manuscript (AAM) version arising from this submission. KO is supported by JSPS KAKENHI Grant number 22KJ0715 and 23K12968.
We would like to thank the organizers of the program "TR SALENTO 2022", where our collaboration on this project begun. We would also like to thank Ga\"etan Borot and Thomas Creutzig for helpful discussions.

\section{Cut-and-join equation for weighted \texorpdfstring{$b$}{b}-Hurwitz numbers}

In this section, we will derive a specific cut-and-join equation for weighted $b$-Hurwitz numbers, which will turn out to be related to $\mathcal W$-algebra representations in the later sections.

\subsection{Jack polynomials}
\label{sub:Jack}

We call $\lambda$ an integer partition of $n$ if $\lambda$ is a finite
sequence
$\lambda_1 \geq \cdots \geq \lambda_\ell > 0$ of nonnegative integers
that sums up to $n =: |\lambda|$. We denote it by $\lambda \vdash n$, and
its number of parts $\ell$ is denoted $\ell(\lambda)$. Sometimes we use the notation $\lambda =
(1^{m_1(\lambda)},2^{m_2(\lambda)},\dots)$, where $m_i(\lambda)$
denotes the number of parts of $\lambda$ that are equal to $i$. The set of partitions of the same size is
linearly ordered:
\[ \lambda \leq \mu \iff \sum_{i=1}^j\lambda_i \leq \sum_{i=1}^j\mu_i
\qquad  \forall j.\]
We will
interchangeably use the terms \emph{an integer partition $\lambda$}
and \emph{a Young
diagram $\lambda$} and we associate the latter with the collection of boxes
$\{\lambda \ni \square := (x,y)\colon 1 \leq y \leq \ell(\lambda), 1
\leq x \leq \lambda_y\}$. 
For such a box $\square  = (x,y) \in \lambda$ we
define its $\alpha$-deformed content as the quantity $c_\alpha(x,y) :=
\alpha(x-1) - (y-1)$. This quantity plays an important role in theory of
Jack polynomials, that we will
briefly describe following~\cite{Stanley1989,Macdonald1995}.

Let $R$ be a ring and let $R(\alpha)[\pp]$ denote the polynomial ring in
infinitely many variables $\pp = (p_k)_{k \geq 1}$ over the ring $R(\alpha)$. The following operator, called the
\emph{Laplace--Beltrami operator}, acts on $R(\alpha)[\pp]$:
\begin{equation*}
  \label{eq:Laplace--BeltramiDef'}
  D_\alpha := \frac{1}{2}\left(\sum_{k,\ell \geq 1}\alpha
      k \ell \cdot p_{k+\ell}\partial_{p_{k}}\partial_{p_{\ell}}+\sum_{k,\ell \geq 1}
      (k+\ell)\cdot p_{k}p_{\ell}\partial_{p_{k+\ell}}+(\alpha-1)\sum_{k \geq 1}k(k-1)p_k \partial_{p_k}\right).
  \end{equation*}

\emph{Jack polynomials} $J^{(\alpha)}_\lambda$ are elements of
 $\Q(\alpha)[\pp]$ that are indexed by integer partitions
 $\lambda$. Consider the following grading on the ring $\Q(\alpha)[\pp]$
 defined on the basis: $\deg(\alpha) := 0, \deg(p_k) := k$. Then, the Jack
 polynomial $J^{(\alpha)}_\lambda$ can be characterized (up to a
 normalization factor) as the unique homogeneous polynomial of degree
 $|\lambda|$ such that:
 \begin{enumerate}
   \item  it is an eigenfunction of the Laplace--Beltrami operator: \label{eq:Laplace--BeltramiAction'} $D_\alpha J^{(\alpha)}_\lambda = \sum_{\square \in \lambda}c_\alpha (\square) J^{(\alpha)}_\lambda,$
  \item the transition matrix from $\{J^{\al}_\lambda\}_{|\lambda|=n}$
    to the monomial symmetric functions $\{m_\lambda(\xx)\}_{|\lambda|=n}$
    is lower triangular after identifying $p_k$ with the power-sum
    symmetric functions $p_k = p_k(\xx) := \sum_i x_i^k$.
   \end{enumerate}

We will work with the \emph{integral Jack polynomials} which are
normalized such that the coefficient
$[p_1^{|\lambda|}]J^{(\alpha)}_\lambda = 1$.
The quantity
\begin{equation*}
  \label{eq:JackNormalization}
  j^{(\alpha)}_\lambda := \prod_{\square \in
    \lambda}\hook_\lambda(\square)\cdot\hook_\lambda(\square)',
\end{equation*}
with
\[ \hook_\lambda(\square):= \alpha(\lambda_y-x)+\lambda_x'-y+1,\quad 
  \hook_\lambda(\square)' = \alpha(\lambda_y-x+1)+(\lambda_x'-y) \]
is the normalization factor.  One can define a scalar
product on $\Q(\alpha)[\pp]$ for which
the $J_\lambda^\al$ give an  orthogonal basis with the norm  $\|
J_\lambda^\al\|^2 = j_\lambda^\al$. One can show that this is
precisely the $\alpha$-deformed Hall scalar product for which $p_\lambda :=
\prod_{i=1}^{\ell(\lambda)}p_{\lambda_i}$ is orthogonal with the
classical norm:
\begin{equation*}
\langle p_\lambda, p_\la \rangle =
\alpha^{\ell(\lambda)}\frac{\prod_{i\geq 1}m_i(\la)!i^{m_i(\la)}}{|\la|!}.
\end{equation*}

Let us define the operators
$(\J^1_{k})_{k \in \Z}$ acting on $R[\tilde{\pp}]\llbracket \hbar \rrbracket$ by
\begin{equation}\label{eq:J1def}
  \J^1_{k} = \begin{cases} \hbar k\cdot \partial_{\tilde{p}_{k}}
    &\text{ if } k> 0,\\
    0 &\text{ if } k= 0,\\
    \hbar \tilde{p}_{-k} &\text{ if } k < 0.
  \end{cases}
\end{equation}
Since $[\J^1_{k}, \J^1_{\ell}] = k\hbar^2\delta_{k+\ell,0}$, the
action of
$(\J^1_{k})_{k\in \Z}$ on $R[\tilde{\pp}]\llbracket \hbar \rrbracket$ is a representation of
the Heisenberg algebra. Furthermore, with the change of variables $\tilde{p}_k=
\sqrt{\alpha}^{-1} p_k$ one has $\partial_{\tilde{p}_k} =
\sqrt{\alpha}\partial_{p_k}$, hence $J^\al_\lambda$ can be
considered as an element of $\Q(\sqrt{\alpha})[\tilde{\pp}]$ and the
rescaled Laplace--Beltrami operator $\tilde{D}_\alpha :=
\frac{\hbar}{\sqrt{\alpha}} D_\alpha$ can be written as
\begin{equation}
  \label{eq:Laplace--BeltramiDef}
  \tilde{D}_\alpha := \frac{\hbar^{-2}}{2}\sum_{k\geq1}\J^1_{-k}\left(\sum_{\ell\geq1}\J^1_{\ell}  \J^1_{k-\ell}
   -(k-1)\hbar\cdot\bb\cdot\J^1_{k}\right),
\end{equation}
and 
 \begin{equation}
   \label{eq:Laplace--BeltramiAction}
    \tilde{D}_\alpha J^{(\alpha)}_\lambda = \hbar \sum_{\square \in \lambda}\tilde{c}_\alpha (\square) J^{(\alpha)}_\lambda,
  \end{equation}
  where $\tilde{c}_\alpha (x,y) :=
  \sqrt{\alpha}(x-1)-\sqrt{\alpha}^{-1}(y-1)$, and $\bb$ is given by \eqref{eq:parameters}.

           \subsection{Nazarov--Sklyanin operators and co-transition measure}

Consider the (infinite) row vector
$ \J^1_+= (\J^1_k)_{{k} \in \Z_{\geq 1}}$ and dually the column vector $\J^1_- =
(\J^1_{-k})_{{k} \in \Z_{\geq 1}}$. Let $L = (L_{k,\ell})_{k,\ell \in
  \Z_{\geq 1}}$ be the infinite matrix defined by
$L_{k,\ell} := \J^1_{k-\ell}-\delta_{k,\ell}\,k\hbar\bb$, for all $k,\ell\in\Z_{\geq 1}$:
$$
L = \begin{bmatrix} 
-\hbar\bb & \J^1_{-1} & \J^1_{-2} & \cdots\\
\J^1_{1} & -2\hbar\bb & \J^1_{-1} & \ddots\\
\J^1_{2} &\J^1_{1} & -3\hbar\bb & \ddots\\
\vdots & \ddots & \ddots & \ddots 
\end{bmatrix}.
$$

Define the following generating series:
\begin{equation}
  \label{eq:defBoolean}
  \sum_{\ell \geq 0}B^{(\alpha)}_{\ell+2}(\lambda)z^{-\ell-1} = z - (z+\sqrt{\alpha}^{-1}\ell(\lambda))\prod_{i=1}^{\ell(\lambda)}\frac{z+\sqrt{\alpha}^{-1} (i-1)-\sqrt{\alpha}\lambda_i}{z+\sqrt{\alpha}^{-1} i-\sqrt{\alpha}\lambda_i}.
\end{equation}

The main
result of Nazarov--Sklyanin~\cite[Theorem 2]{NazarovSklyanin2013}
is a construction of a family of commuting operators that includes the
Euler and Laplace--Beltrami operators, whose eigenfunctions are Jack
polynomials with explicit eigenvalues:

\begin{theorem}
  \label{theo:Nazarov-Sklyanin}
  The following equality holds true for all $\ell\geq 0$:
  \begin{align*}
        \left(\J_- L^\ell \J_+\right)  J^{(\alpha)}_\lambda &=
    \hbar^{\ell+2}B^{(\alpha)}_{\ell+2}(\lambda) \cdot J^{\al}_\lambda. \label{eq:NS-Boolean}
    \end{align*}
  \end{theorem}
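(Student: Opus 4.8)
The plan is to reduce \cref{theo:Nazarov-Sklyanin} to an eigenvalue computation and then carry it out with the branching rules for Jack polynomials. Each operator $\J_- L^\ell\J_+$ is homogeneous of degree zero for the grading $\deg(\tilde{p}_k):=k$ — since $\J^1_j$ shifts degree by $-j$, the three factors of $\J^1_{-k}(L^\ell)_{k,m}\J^1_m$ contribute $+k$, $m-k$ and $-m$ — so it preserves the finite-dimensional space $\Lambda^{(n)}:=\mathrm{span}\{J^\al_\mu:|\mu|=n\}$, and only finitely many of its summands act nontrivially on a given homogeneous polynomial. It therefore suffices to show that $J^\al_\lambda$ is an eigenvector with the stated eigenvalue. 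I would package the family into the generating series
\[
\mathcal E(z):=\J_-(z-L)^{-1}\J_+ \;=\; \sum_{\ell\ge 0}\bigl(\J_- L^\ell\J_+\bigr)z^{-\ell-1},
\]
so that the claim becomes $\mathcal E(z)\,J^\al_\lambda = \bigl(\sum_{\ell\ge 0}\hbar^{\ell+2}B^\al_{\ell+2}(\lambda)\,z^{-\ell-1}\bigr)J^\al_\lambda$. The two lowest cases can be handled directly and fix the normalisation: for $\ell=0$, $\J_-\J_+=\sum_{k\ge1}\J^1_{-k}\J^1_k=\hbar^2\sum_{k\ge1}k\,\tilde{p}_k\partial_{\tilde{p}_k}$ is $\hbar^2$ times the degree operator, giving $B^\al_2(\lambda)=|\lambda|$; for $\ell=1$, reordering the two inner $\J^1$-factors of $\J_- L\J_+$ by $[\J^1_a,\J^1_b]=a\hbar^2\delta_{a+b,0}$ and comparing with \eqref{eq:Laplace--BeltramiDef} gives $\J_- L\J_+=2\hbar^2\tilde{D}_\alpha-\hbar\bb\,\J_-\J_+$, so by \eqref{eq:Laplace--BeltramiAction} $J^\al_\lambda$ is again an eigenvector, with $B^\al_3(\lambda)=2\sum_{\square\in\lambda}\tilde{c}_\alpha(\square)-\bb\,|\lambda|$ — matching the $z^{-1}$ and $z^{-2}$ coefficients predicted by \eqref{eq:defBoolean}.

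For general $\ell$ the plan is a direct computation. Feed $\J_+J^\al_\lambda=\hbar\bigl(m\,\partial_{\tilde{p}_m}J^\al_\lambda\bigr)_{m\ge1}$ into the geometric expansion of $(z-L)^{-1}$, apply $\J_-=\hbar(\tilde{p}_k\,\cdot)_{k\ge1}$ on the left, and expand in the Jack basis using the Pieri/branching rules for the operators $\partial_{\tilde{p}_m}$ acting on $J^\al_\lambda$. The structural heart of the matter is that the auxiliary ``propagator'' $(z-L)^{-1}$, sandwiched between $\J_-$ and $\J_+$, collapses these \emph{a priori} multi-box moves down to a single-box co-transition: the outcome is
\[
\mathcal E(z)\,J^\al_\lambda \;=\; \hbar^2\Bigl(\,\sum_{\square}\frac{\wt_\lambda(\square)}{z-\hbar\,\theta(\square)}\Bigr)\,J^\al_\lambda,
\]
the sum running over the removable corners $\square$ of $\lambda$, with $\theta(\square)=\sqrt{\alpha}\lambda_i-\sqrt{\alpha}^{-1}i\ (=\tilde{c}_\alpha(\lambda_i+1,i+1))$ for the corner in row $i$, and $\wt_\lambda$ the positive weights of the Jack analogue of Kerov's co-transition measure, which sum to $|\lambda|$. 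In particular this forces $J^\al_\lambda$ to be an eigenvector. (A consistency check: every $\J_- L^\ell\J_+$ is self-adjoint for the $\alpha$-deformed Hall pairing, since $L$ has scalar diagonal and its transpose matches the Fock-space adjoint $(\J^1_j)^{*}=\J^1_{-j}$; as the $J^\al_\mu$ are orthogonal, diagonality in that basis is compatible with the mutual commutativity of the $\J_- L^\ell\J_+$.)

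It then remains to identify the rational function above with the one in \eqref{eq:defBoolean}, i.e.\ that $\sum_{\square}\wt_\lambda(\square)/(w-\theta(\square))$ equals $w-(w+\sqrt{\alpha}^{-1}\ell(\lambda))\prod_{i}\frac{w+\sqrt{\alpha}^{-1}(i-1)-\sqrt{\alpha}\lambda_i}{w+\sqrt{\alpha}^{-1}i-\sqrt{\alpha}\lambda_i}$. Both sides are rational in $w$, vanish at infinity with $w^{-1}$-coefficient $|\lambda|$, and have only simple poles; on the right-hand side the denominator $w+\sqrt{\alpha}^{-1}i-\sqrt{\alpha}\lambda_i$ is cancelled by the numerator $w+\sqrt{\alpha}^{-1}i-\sqrt{\alpha}\lambda_{i+1}$ of the following factor precisely when $\lambda_i=\lambda_{i+1}$, so the surviving poles sit at $w=\sqrt{\alpha}\lambda_i-\sqrt{\alpha}^{-1}i$ with $\lambda_i>\lambda_{i+1}$ — one per removable corner, as on the left — while the zeros surviving cancellation sit at the addable corners; hence the right-hand side equals $w$ minus the ratio of the addable-corner factors over the removable-corner factors, the classical closed form for such a Cauchy transform. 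Equality then follows by a short residue comparison, and extracting the coefficient of $z^{-\ell-1}$ yields $\hbar^{\ell+2}B^\al_{\ell+2}(\lambda)$, which would finish the argument.

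The main obstacle is the localisation claim in the second paragraph. For $\ell\le1$ it is transparent, but for general $\ell$ the branching coefficients of $\partial_{\tilde{p}_m}J^\al_\lambda$ genuinely involve removing ribbon-like multi-box strips, and it is a nontrivial cancellation — essentially the content of Nazarov--Sklyanin's Lax-matrix analysis in \cite{NazarovSklyanin2013} — that after convolution against $(z-L)^{-1}$ only the single-box co-transition survives, with exactly the Kerov weights. Tracking the powers of $\hbar$ (and the rescaling between the variable $z$ of $\mathcal E$ and the variable of \eqref{eq:defBoolean}) consistently throughout is the secondary bookkeeping point.
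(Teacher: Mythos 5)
The paper does not actually prove \cref{theo:Nazarov-Sklyanin}: it imports it verbatim from Nazarov--Sklyanin \cite{NazarovSklyanin2013} (their Theorem 2), so there is no internal argument to compare yours against. Judged as a self-contained proof, your proposal has a genuine gap, and you name it yourself. The sound parts are the degree-zero observation, the explicit checks for $\ell=0$ (degree operator, $B^{(\alpha)}_2(\lambda)=|\lambda|$) and $\ell=1$ (via $\J_-L\J_+ = 2\hbar^2\tilde D_\alpha-\hbar\bb\,\J_-\J_+$ and \eqref{eq:Laplace--BeltramiAction}), and the partial-fraction identification of the right-hand side of \eqref{eq:defBoolean} as the Cauchy transform of a measure with simple poles at the removable corners $\sqrt{\alpha}\lambda_i-\sqrt{\alpha}^{-1}i$. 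But the entire content of the theorem is the statement that $J^{(\alpha)}_\lambda$ is an eigenvector of $\J_-L^\ell\J_+$ for \emph{every} $\ell$, equivalently your ``localisation'' claim that $\J_-(z-L)^{-1}\J_+$ collapses onto the single-box co-transition with Kerov weights; everything beyond $\ell\leq 1$ rests on it, and you do not establish it, deferring instead to Nazarov--Sklyanin's Lax-matrix analysis.

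Two remarks on why this step cannot be waved through. Your consistency check (self-adjointness of $\J_-L^\ell\J_+$ for the deformed Hall pairing, plus orthogonality of the $J^{(\alpha)}_\mu$) does not force diagonality: the Laplace--Beltrami operator does not have simple spectrum on partitions of fixed size, which is precisely why the characterization of Jack polynomials in \cref{sub:Jack} also requires triangularity with respect to dominance; to conclude one would need to prove that each $\J_-L^\ell\J_+$ commutes with $\tilde D_\alpha$ \emph{and} acts triangularly on monomials, which is essentially the transfer-matrix argument of \cite{NazarovSklyanin2013}, not a consequence of the setup. Likewise, the proposed direct computation via Pieri/branching rules must control removals of ribbons of unbounded length produced by $\partial_{\tilde p_m}$, and the cancellation down to single boxes after convolution with $(z-L)^{-1}$ is the hard combinatorial core, not bookkeeping. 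Since the paper simply cites the result, an honest citation would match its route; as a proof attempt, the missing localisation step is the theorem itself.
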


  Note that the operators $\hbar^{-2} \J_- \J_+$ and $\frac{\hbar^{-2}}{2}\J_- (L+\hbar\bb\Id) \J_+$ are precisely
  the Euler and rescaled Laplace--Beltrami operators, and
  \eqref{eq:Laplace--BeltramiAction} is a special case of \cref{theo:Nazarov-Sklyanin}.

The coefficients $B^{(\alpha)}_{\ell+2}(\lambda)$ can be interpreted
as the Boolean cumulants of a transition measure of
Kerov, or equivalently as the moments of a
co-transition measure of Kerov, both introduced
in~\cite{Kerov1993} to study random Young diagrams. It appears that this fact
was not known to Nazarov and Sklyanin, and a connection
to probability was
first mentioned in~\cite{Moll2015}. Later this connection was
described in greater generality and Nazarov--Sklyanin operators were
used to prove the LLN and CLT for Jack measures~\cite{Moll2023,CuencaDolegaMoll2026}, and Lassalle's
positivity conjecture~\cite{BenDaliDolega2023}. We will  use this
connection here to find an explicit formula for the action of $\J_-
(L+\hbar\bb\Id)^\ell \J_+$ on Jack polynomials that will be crucial in
the next section.

Define a
probability measure $\nu^{(\alpha)}_{\lambda}$, called a
\emph{co-transition measure}, as the measure uniquely characterized by
its Cauchy transform expanded around infinity:
\begin{equation}
  \label{eq:Cauchy-St}
  \sqrt{\alpha}|\lambda|\int_\R \frac{d\nu^{(\alpha)}_\lambda(x)}{z-x} =z - (z+\sqrt{\alpha}^{-1}\ell(\lambda))\prod_{i=1}^{\ell(\lambda)}\frac{z+\sqrt{\alpha}^{-1} (i-1)-\sqrt{\alpha}\lambda_i}{z+\sqrt{\alpha}^{-1} i-\sqrt{\alpha}\lambda_i}.
\end{equation}
The fact that $\nu^{(\alpha)}_\lambda $ is a probability measure is due to
Kerov~\cite[Lemma (2.6)]{Kerov2000}. Note that $\nu^{(\alpha)}_\lambda $
is supported on a discrete set located at the
$\alpha$-contents of the outer corners of
$\lambda$ that can be removed to obtain a smaller diagram $\mu$:
\[y \in \supp(\nu^{(\alpha)}_\lambda) \iff \mu \subset \lambda, |\mu|
  = |\lambda|-1, y = \big(\tilde{c}_\alpha(\lambda\setminus\mu)-\bb\big).\]
Therefore
\begin{equation}
  \label{eq:CoTransitionForm}
  \nu^{(\alpha)}_\lambda = \sum_{\mu
               \nearrow
               \lambda}f_\mu^\lambda\cdot\delta_{\big(\tilde{c}_\alpha(\lambda\setminus\mu)-\bb\big)},
\end{equation}
where $\mu
               \nearrow
               \lambda$ means that we sum over all Young diagrams $\mu$
               that are obtained from $\la$ by removing a box. It was
               proven by Kerov~\cite[Theorem (6.7)]{Kerov2000} that
               the masses $f_\mu^\lambda$ can be computed as:
               \begin{equation}
                 \label{eq:KerovMasses}
                 f_\mu^\la = \frac{\langle \J^1_{-1} J^\al_\mu, J^\al_\la \rangle}{\hbar\sqrt{\alpha}|\lambda|j_\mu^\al}.
               \end{equation}

We will use the relation between Nazarov--Sklyanin operators and the co-transition
measure to prove the following lemma that will be crucial in the next
subsection.

\begin{lemma}
    \label{cor:Boolean}
  Let $\la \vdash n+1$. The following identity holds true:
  \[ \big(\J_-(L+\hbar\bb\Id)^\ell \J_+\big)
    J^{(\alpha)}_{\la} = \frac{\hbar}{\sqrt{\alpha}}\sum_{\mu \vdash n}\frac{\langle \J^1_{-1}
    J_\mu^\al, J_\la^\al \rangle}{ j^{(\alpha)}_\mu}
    \left(\hbar \tilde{c}_\alpha(\la\setminus\mu)\right)^\ell J^{(\alpha)}_{\la}.
            \]
         \end{lemma}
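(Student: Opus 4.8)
The plan is to reduce everything to \cref{theo:Nazarov-Sklyanin} together with Kerov's description of the co-transition measure. First note that since $\hbar\bb\Id$ is a scalar matrix, $(L+\hbar\bb\Id)^\ell=\sum_{j=0}^\ell\binom{\ell}{j}(\hbar\bb)^{\ell-j}L^j$, so $\J_-(L+\hbar\bb\Id)^\ell\J_+=\sum_{j=0}^\ell\binom{\ell}{j}(\hbar\bb)^{\ell-j}\,\J_-L^j\J_+$; by \cref{theo:Nazarov-Sklyanin} each $\J_-L^j\J_+$ is diagonal in the Jack basis, hence so is $\J_-(L+\hbar\bb\Id)^\ell\J_+$, and it suffices to identify its eigenvalue on $J^{(\alpha)}_\la$. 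The efficient way to do this is to package all these eigenvalues into a generating series in an auxiliary spectral variable and recognise it, via \eqref{eq:defBoolean} and \eqref{eq:Cauchy-St}, as the Cauchy transform of $\nu^{(\alpha)}_\la$, whose atoms and masses are given explicitly by \eqref{eq:CoTransitionForm} and \eqref{eq:KerovMasses}.

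Concretely, I would form the operator-valued formal series in $w^{-1}$
\[
\sum_{\ell\geq 0} w^{-\ell-1}\,\J_-(L+\hbar\bb\Id)^\ell\J_+\;=\;\J_-\frac{1}{w-(L+\hbar\bb\Id)}\J_+\;=\;\J_-\frac{1}{(w-\hbar\bb)-L}\J_+,
\]
which is well defined because on a fixed $J^{(\alpha)}_\la$ only finitely many terms contribute at each order in $w^{-1}$. By \cref{theo:Nazarov-Sklyanin} it acts on $J^{(\alpha)}_\la$ by the scalar $\sum_{j\geq 0}(w-\hbar\bb)^{-j-1}\hbar^{j+2}B^{(\alpha)}_{j+2}(\la)$; substituting $w-\hbar\bb=\hbar z$ turns this into $\hbar\sum_{j\geq 0}z^{-j-1}B^{(\alpha)}_{j+2}(\la)$, which by \eqref{eq:defBoolean} and \eqref{eq:Cauchy-St} equals a fixed scalar multiple of $|\la|\int_\R\frac{d\nu^{(\alpha)}_\la(x)}{z-x}$. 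Now I substitute \eqref{eq:CoTransitionForm}: the Cauchy transform becomes $\sum_{\mu\nearrow\la}\frac{f_\mu^\la}{z-(\tilde{c}_\alpha(\la\setminus\mu)-\bb)}$, and since $z=w/\hbar-\bb$ the pole at $z=\tilde{c}_\alpha(\la\setminus\mu)-\bb$ sits at $w=\hbar\,\tilde{c}_\alpha(\la\setminus\mu)$ --- the shift by $\bb$ that distinguishes $L+\hbar\bb\Id$ from $L$ cancels exactly the $-\bb$ in the location of the atoms of $\nu^{(\alpha)}_\la$, which is precisely why $\tilde{c}_\alpha$ rather than $\tilde{c}_\alpha-\bb$ shows up in the statement. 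Inserting the Kerov masses \eqref{eq:KerovMasses} and simplifying the $\hbar$ and $\sqrt{\alpha}$ prefactors gives
\[
\J_-\frac{1}{w-(L+\hbar\bb\Id)}\J_+\,J^{(\alpha)}_\la=\frac{\hbar}{\sqrt{\alpha}}\sum_{\mu\nearrow\la}\frac{\langle\J^1_{-1}J^{(\alpha)}_\mu,J^{(\alpha)}_\la\rangle}{j^{(\alpha)}_\mu}\cdot\frac{1}{w-\hbar\,\tilde{c}_\alpha(\la\setminus\mu)}\,J^{(\alpha)}_\la,
\]
and expanding $\frac{1}{w-\hbar\,\tilde{c}_\alpha(\la\setminus\mu)}=\sum_{\ell\geq 0}w^{-\ell-1}\big(\hbar\,\tilde{c}_\alpha(\la\setminus\mu)\big)^\ell$ and comparing coefficients of $w^{-\ell-1}$ yields the identity of \cref{cor:Boolean} with the sum restricted to $\mu\nearrow\la$.

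It remains only to note that the summation range may be enlarged from $\{\mu:\mu\nearrow\la\}$ to $\{\mu\vdash n\}$ without changing anything: by the Pieri rule for Jack polynomials, $\J^1_{-1}J^{(\alpha)}_\mu=\hbar\tilde{p}_1 J^{(\alpha)}_\mu$ is a linear combination of Jack polynomials indexed by the diagrams obtained from $\mu$ by adding a single box, so by orthogonality $\langle\J^1_{-1}J^{(\alpha)}_\mu,J^{(\alpha)}_\la\rangle=0$ unless $\mu\nearrow\la$, matching the form stated in \cref{cor:Boolean}. All the substantive input is carried by \cref{theo:Nazarov-Sklyanin} and Kerov's computation of $\nu^{(\alpha)}_\la$; the one genuine hazard is the bookkeeping --- correctly aligning the spectral shift $w\mapsto w-\hbar\bb$ with the $-\bb$ offset in \eqref{eq:CoTransitionForm}, and tracking the powers of $\hbar$ and $\sqrt{\alpha}$ through \eqref{eq:defBoolean}, \eqref{eq:Cauchy-St} and \eqref{eq:KerovMasses} --- together with the routine but necessary check that the formal resolvent manipulations (hence the comparison of $w^{-\ell-1}$-coefficients) are legitimate.
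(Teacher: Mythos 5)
Your proof is correct and rests on exactly the same ingredients and structure as the paper's argument: you express the eigenvalues of \cref{theo:Nazarov-Sklyanin} as moments of Kerov's co-transition measure via \eqref{eq:defBoolean}, \eqref{eq:Cauchy-St}, \eqref{eq:CoTransitionForm} and \eqref{eq:KerovMasses}, and use orthogonality (the Pieri rule) to pass from $\mu\nearrow\la$ to all $\mu\vdash n$. The only difference is presentational: you absorb the shift $L\mapsto L+\hbar\bb\Id$ into a resolvent generating series in an auxiliary spectral variable, whereas the paper derives the moment formula for $B^{(\alpha)}_{\ell+2}(\la)$ directly and leaves the corresponding shift cancellation implicit in the final application of \cref{theo:Nazarov-Sklyanin}.
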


         \begin{proof}
           Fix $\la \vdash n+1$. It is well-known
           (see~\cite{Stanley1989}) that $\langle \J^1_{-1} J_\mu^\al,
           J_\la^\al \rangle$ is nonzero only if $\mu \nearrow
           \la$. Therefore \eqref{eq:CoTransitionForm}
           and \eqref{eq:KerovMasses} imply that
           \[ \hbar \sqrt{\alpha} \cdot|\la|\cdot \nu_\la^\al = \sum_{\mu \vdash
               n}\langle \J^1_{-1} J_\mu^\al, J_\la^\al \rangle
             \frac{\delta_{\big(\tilde{c}_\alpha(\lambda\setminus\mu)-\bb\big)}}{j_\mu^\al}.\]
           This formula, together with \eqref{eq:defBoolean} and
           \eqref{eq:Cauchy-St}, immediately gives that for all $\ell
           \geq 0$
           \[ B^\al_{\ell+2}(\la) = \sum_{\mu \vdash
               n} \frac{\langle \J^1_{-1} J_\mu^\al, J_\la^\al \rangle}{\hbar\sqrt{\alpha} j_\mu^\al}
             \left(\tilde{c}_\alpha(\la\setminus\mu)-\bb\right)^\ell.\]
           We conclude by applying \cref{theo:Nazarov-Sklyanin}.
       \end{proof}

       \subsection{Weighted $b$-Hurwitz numbers and cut-and-join
         equation}
       \label{sub:bHurwitz}

Following~\cite{ChapuyDolega2022} we define the generating function of
$G$-weighted $b$-Hurwitz numbers $\HurGbbg (\mu_1,\dots,\mu_\ell)$ by
the following identity:
\begin{multline}
  \label{eq:TauBGWeight'}
\tau_G^{(\bb)} := \sum_{n \geq 0} \left(\frac{\sqrt{\alpha}}{\hbar} \right)^n\sum_{\lambda \vdash n} 
		\frac{J_\lambda^{(\alpha)}(\sqrt{\alpha}\tilde{\pp})}{j_\lambda^{(\alpha)}}\prod_{\square
                  \in \lambda}G(\hbar\cdot \tilde{c}_{\alpha}(\square))=\\
                =\exp\left(\sum_{g
          \in
          \frac{1}{2}\cdot\Z_{\geq 0}}\sum_{n \in \Z_{\geq
            1}}\hbar^{2g-2+n}\sum_{\mu \colon \ell(\mu)
          = n}\HurGbbg(\mu_1,\dots,\mu_n)
          \cdot \tilde{p}_\mu\right),
    \end{multline}
    where $G(z) = \sum_{i=0}^\infty g_i z^i$. These are so-called
    \emph{single} $G$-weighted $b$-Hurwitz numbers, and the
    $\tau_G^{(\bb)}$ given by \eqref{eq:TauBGWeight'} 
    corresponds to the specialization $\tau_G^{(b)}(\pp,\qq)\big|_{q_k
      = \delta_{k,1}}$ of the generating function of double
    $G$-weighted $b$-Hurwitz numbers introduced in
    \cite{ChapuyDolega2022} after the reparametrization
    \[ \tilde{p}_k = \sqrt{\alpha}^{-1}p_k,\ \ \hbar =
      \sqrt{\alpha},\ \  \bb =- \sqrt{\alpha}^{-1} b.\]
            
    It is convenient to add a redundant
    parameter $t$ by the substitution $\tau_G^{(\bb)}(t,\hbar,\tilde{\pp}) := \tau_G^{(\bb)}\big|_{\tilde{p}_k
      = t^k\tilde{p}_k}$. It is not hard to show that
    \[\hbar t\partial_t\log\tau_G^{(\bb)}(t,\hbar,\tilde{\pp}) \in
      \Q(\alpha)[\sqrt{\alpha}\tilde{\pp},\g,\hbar\sqrt{\alpha}^{-1}]\llbracket t \rrbracket.\]
    However, a much stronger result due to Chapuy and the second author holds, which explains the meaning of the parameter
    $\sqrt{\alpha}\cdot \bb$:

    \begin{theorem}[\cite{ChapuyDolega2022}]
      \label{theo:CD22}
      For any partition $\mu$ the $G$-weighted $b$-Hurwitz number
      \[\alpha^g\cdot|\mu|\HurGbbg(\mu_1,\dots,\mu_n)
        \in \Z_{\geq 0}[-\sqrt{\alpha}\cdot\bb][\g]\]
      is a weighted
      generating series of labeled real meromorphic functions $f$
      of degree $|\mu|$ with ramification profile
      $(\mu_1,\dots,\mu_n)$ over $\infty$, where the weight is an
      explicit expression in $\g$ and $\sqrt{\alpha}\bb$ that depends
      on the combinatorial data associated with $f$.
    \end{theorem}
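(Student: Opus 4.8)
The plan, following the strategy of \cite{ChapuyDolega2022}, is to realize $\tau_G^{(\bb)}$ as the generating series of an honest combinatorial model of (possibly non-orientable) branched coverings, and to read off integrality, polynomiality in $b$, and positivity from the manifest structure of that model. First I would fix the combinatorial encoding of a labeled real meromorphic function $f$: by monodromy, an orientable degree-$|\mu|$ covering of $\CP^1$ with ramification profile $(\mu_1,\dots,\mu_n)$ over $\infty$ is a tuple $\sigma_\infty\rho_1\cdots\rho_k=\mathrm{id}$ of permutations of $\{1,\dots,|\mu|\}$ with $\sigma_\infty$ of cycle type $\mu$, the $\rho_j$ recording the remaining branch points and contributing a monomial in $\g$ that encodes their ramification data; the non-orientable coverings are encoded by the standard passage from pairs of permutations to pairs of perfect matchings (the Gelfand pair $(S_{2N},H_N)$ of zonal polynomials at $\alpha=2$, and maps on possibly non-orientable surfaces in general). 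To each such $f$ one attaches the further weight $b^{\vartheta(f)}$, where $\vartheta$ is La~Croix's \emph{measure of non-orientability}: a non-negative integer, additive over connected components, defined by an inductive edge-deletion on a map associated with $f$, and vanishing precisely on the orientable coverings (so $b=0$ returns the classical case).

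With this model in hand, the positivity statement becomes the assertion that, after summing over the labelings, rooting (which produces the factor $|\mu|$), and sorting coverings by the Euler characteristic of the source surface (which produces the factor $\alpha^g$ via Riemann--Hurwitz), the weighted count $\sum_f b^{\vartheta(f)}$, with the $\g$-weights as above, has non-negative integer coefficients as a polynomial in $b=-\sqrt{\alpha}\,\bb$. For monomial weights $G$ this is essentially La~Croix's theorem on the genus series of maps; for arbitrary $G$ the new content is (i) that the $b$-marginal is independent of the order in which the branch points are adjoined, so that ``$\vartheta(f)$'' is a statistic of $f$ rather than of a history of $f$, and (ii) that the weight $G$ does not destroy positivity. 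Both should follow from a surgery analysis: adjoining one extra branch point performs a ``cut'', a ``join'', or a ``twist'' on the source surface, and $\vartheta$ changes in a way that can be controlled by induction on the number of branch points.

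The bridge between this combinatorial model and the Jack-theoretic definition \eqref{eq:TauBGWeight'} is the technical heart, and --- since Jack ``characters'' at general $b$ carry no representation-theoretic meaning, so there is no Frobenius-type formula to invoke --- I would establish it by matching cut-and-join equations. On the algebraic side, the diagonal weight $\prod_{\square\in\lambda}G(\hbar\,\tilde{c}_\alpha(\square))$ is built up one box at a time: \eqref{eq:CoTransitionForm}--\eqref{eq:KerovMasses} give the Pieri-type action of $\J^1_{-1}$ (adjoining a box of content $\tilde{c}_\alpha$) on Jack polynomials, and \cref{theo:Nazarov-Sklyanin} with \cref{cor:Boolean} supply the higher content moments, so that $\tau_G^{(\bb)}$ (carrying the redundant parameter $t$) is the unique solution in $\Q(\alpha)[\sqrt{\alpha}\tilde{\pp},\g]\llbracket t\rrbracket$ of a linear equation $\hbar\,t\partial_t\,\tau=\mathcal{A}_G\,\tau$ with $\tau|_{t=0}=1$; all of the $b$-dependence sits in the $\sqrt{\alpha}$-rescaling of contents and in the shift $-k\hbar\bb$ of the matrix $L$, which \eqref{eq:parameters} converts into $b$. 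On the combinatorial side the same equation should say that applying $\mathcal{A}_G$ is dual to ``adjoin one more branch point to every covering'', the extra $\bb$-terms accounting for the non-orientable ``twist'' contributions weighted by the increment of $\vartheta$. As both series solve the equation and agree at $t=0$ they coincide; taking the logarithm (the standard disconnected-to-connected passage, valid because $\vartheta$ is additive over components) identifies $H^{(\bb)}_{G;g}(\mu)$ with the connected-covering count, which by the previous paragraph lies in $\Z_{\geq 0}[-\sqrt{\alpha}\,\bb][\g]$ after multiplication by $\alpha^g|\mu|$.

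I expect the main obstacle to be the positivity input itself --- step (ii), and really the whole of the second paragraph. The difficulty is one of direction: the Jack description is top-down and \emph{a priori} only places the relevant quantities in $\Q(\sqrt{\alpha})$ (the individual Kerov masses \eqref{eq:KerovMasses} are themselves genuine rational functions of $\sqrt{\alpha}$, with no sign), whereas the conclusion is that they reorganize into a bottom-up combinatorial model with non-negative integer structure constants; closing this gap is exactly what forces the delicate La~Croix-type induction, now in the weighted setting. A secondary subtlety is the bookkeeping of the half-integer genera $g\in\tfrac{1}{2}\Z_{\geq 0}$ in \eqref{eq:TauBGWeight'}, which correspond to source surfaces with an odd number of crosscaps and so have no counterpart in the classical orientable ($b=0$) theory --- the Euler-characteristic accounting that yields $\alpha^g$ must accommodate them.
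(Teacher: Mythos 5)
There is nothing in this paper to compare your argument against: \cref{theo:CD22} is stated as an imported result, and the authors say explicitly that it is ``the abbreviated version of the main result of~\cite{ChapuyDolega2022}'' which they will not use or prove, referring to that reference for all definitions and details. So the only meaningful comparison is with the strategy of the cited work, and there your outline is faithful in spirit: Chapuy and Do{\l}\k{e}ga do construct a model of generalized (non-orientable) branched coverings weighted by a La~Croix-type measure of non-orientability, and they do connect it to the Jack-polynomial definition by matching evolution equations that adjoin branch points one at a time (their Lax-type/cut-and-join framework, the same circle of ideas as \eqref{eq:CoTransitionForm}--\eqref{eq:KerovMasses} and the cut-and-join equations in Section 2 of the present paper).

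That said, as a proof your text is a plan rather than an argument, and the parts you defer are exactly the hard content. In your second paragraph, points (i) and (ii) --- that the non-orientability statistic is well defined as a statistic of the covering rather than of a construction history, and that an arbitrary weight $G$ (equivalently, arbitrary ramification data at the other branch points) preserves non-negativity and integrality --- are asserted to ``follow from a surgery analysis'' with no control of how $\vartheta$ changes under the cut/join/twist moves; this is precisely the delicate induction that occupies most of~\cite{ChapuyDolega2022}, and without it the passage from the manifestly rational Jack-side quantities (the Kerov masses in \eqref{eq:KerovMasses}) to a bottom-up count with coefficients in $\Z_{\geq 0}[-\sqrt{\alpha}\,\bb][\g]$ is not established. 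Likewise, the claim that the combinatorial side satisfies the same $t$-evolution equation as the Jack side, with the $\bb$-terms matching the increments of $\vartheta$, is stated but not derived; this bijective/weight-preserving matching is the bridge and cannot be waved through. Finally, note that in the normalization of the present paper the statement concerns $\alpha^g\,|\mu|\,\HurGbbg$ and polynomiality in $-\sqrt{\alpha}\,\bb=b$ jointly with the half-integer genus bookkeeping of \eqref{eq:TauBGWeight'}; you flag this but do not carry out the Euler-characteristic accounting. So: right roadmap, matching the cited proof, but with the essential positivity and matching steps left open.
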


    \cref{theo:CD22} is the abbreviated version of the main result
    of~\cite{ChapuyDolega2022}, and it is stated here to
    highlight topological/combinatorial relevance of the $G$-weighted
    $b$-Hurwitz numbers; we will not use it here and therefore  refer to~\cite{ChapuyDolega2022} for the relevant
    definitions and  details.

    The reparametrization introduced here is motivated by our main
    result that gives an explicit relation between the reparametrized
    function $\tau^{(\bb)}_G$ and the Airy structure coming from
    representations of $\mathcal{W}$-algebras. In particular, one can notice by comparing
    \eqref{eq:TauBGWeight'} with the parametrization used in
    \cref{sub:b-Hur} that $\HurGbg(\mu_1,\dots,\mu_n)=
    \alpha^g\cdot\HurGbbg(\mu_1,\dots,\mu_n)$, and we will show that
    $|\mu|\tilde{H}_{G;g}^{(\bb)}(\mu_1,\dots,\mu_n) \in \Z_{\geq
      0}[-\bb,\g]$, which eliminates the dependence on $\alpha$ and is stronger than \cref{theo:CD22}. In order to do this, we intend
    to find an explicit differential operator of
    finite degree that annihilates $\tau_G^{(\bb)}$ and uniquely
    determines it by the initial condition
    $\tau_G^{(\bb)}\big|_{t=0}=1$ --- this is why we introduced the
    parameter $t$.

    \begin{remark}
      \label{rem:RationalRelation}
      In the following we will mostly work with $G$ being a rational
      function. A generic rational function can be
      parametrized either by $G_u(z) =
      u\frac{\prod_{i=1}^n(P_i+z)}{\prod_{j=1}^m(Q_j-z)}$ for some
      $u,\PP,\QQ$, or by $\check G_u(z) =
      u\frac{\prod_{i=1}^n(1+P_iz)}{\prod_{i=1}^m(1-Q_jz)}$. Note that
      the change of variables $u \mapsto \frac{P_1\cdots
        P_n}{Q_1\cdots Q_m}u, P_i \mapsto P_i^{-1}, Q_j^{-1} \mapsto
      Q_j$ for all $i \in [n]$, $j \in [m]$ transform $G_u$ into
      $\check G_u$. Moreover, $u$ can be absorbed into the parameter $t$ of
      $\tau^{(\bb)}$ by
      substituting $t \mapsto ut$, so that we can equivalently work
      with $G(z) =
      \frac{\prod_{i=1}^n(P_i+z)}{\prod_{j=1}^m(Q_j-z)}$ or $\check G(z) =
      \frac{\prod_{i=1}^n(1+P_iz)}{\prod_{j=1}^m(1-Q_jz)}$, as their generating functions are 
     related by 
     \begin{equation}
       \label{eq:RationalRelation}
       \tau^{(\bb)}_{G}(t,\tilde{\pp},\PP,\QQ,\hbar) =
        \tau^{(\bb)}_{\check G}\left( t\cdot \frac{P_1\cdots
          P_n}{Q_1\cdots
          Q_m},\tilde{\pp},\PP^{-1},\QQ^{-1},\hbar\right).
      \end{equation}
Note that $\check G(z)$ considered as a formal power series has positive
integer coefficients $\check G(z) \in \Z_{\geq 0}[\PP,\QQ]$ therefore it is
a more natural parametrization from an enumerative point
 of view as \cref{theo:CD22} implies that $\alpha^g\cdot|\mu|H_{\check G;g}^{(\bb)}(\mu_1,\dots,\mu_n)
        \in \Z_{\geq 0}[-\sqrt{\alpha}\cdot\bb,\PP,\QQ]$ and each
        monomial in $-\sqrt{\alpha}\cdot\bb, \PP,\QQ$ is counting
        certain generalized branched coverings. However, the
        parametrization $G(z)$ is suited much better for our
        purpose of connecting $\tau^{(\bb)}_{G}$ with
        $\mathcal{W}$-algebras, so we will mostly assume the weight function to be of the form $G= \frac{\prod_{i=1}^n(P_i+z)}{\prod_{j=1}^m(Q_j-z)}$.
         \end{remark}

\subsubsection{Weighted lattice paths}\label{sec:weightedpaths}

   Sometimes, operators annihilating $\tau^{(\bb)}_G$ can be expressed in a particularly elegant
    form involving weighted lattice paths. This point of view was
    applied in~\cite{Moll2023,CuencaDolegaMoll2026} to
    Nazarov--Sklyanin operators and it turned out to be very useful in
    studying the structure of Jack polynomials~\cite{BenDaliDolega2023}. In the
following we further develop this approach to connect $G$-weighted
$b$-Hurwitz numbers with $\mathcal{W}$-algebras.

All the paths we consider will be directed paths on the lattice
$\mathbb Z^2$ where integral points $(x,y) \in \Z^2$ are connected by steps of the form $(1,k)$ with $k
\in \mathbb Z$. We will refer to the first coordinate of the integral
points as the
$X$-coordinate and the second coordinate as the $Y$-coordinate. We only consider such paths and hence we will simply refer to them as paths. 

\begin{definition}
	Suppose that $x<x'$ and $y,y'$ are integers. We define  $\Gamma_{(x,y) \to (x',y')}$ to be 
	the set of \emph{paths} starting from $(x,y)$ and finishing at
	$(x',y')$, and we say that a path $\gamma \in \Gamma_{(x,y) \to (x',y')}$ has length $x'-x$,
        so that it contains $x'-x$ steps.
\end{definition}

Consider three integers $x < x' < x''$. Given two paths, say $\gamma \in \Gamma_{(x,y) \to (x',y')}$ and
$\gamma' \in \Gamma_{(x',y') \to (x'',y'')}$,  we can
construct a new path $\gamma \cup \gamma' \in \Gamma_{(x,y) \to
	(x'',y'')}$ by concatenation.  We also consider special types of paths, called bridges, that stay  above the $X$-axis.
\begin{definition}\label{def:bridges} 	Suppose that $x<x'$ and $y,y'$ are integers. We define a \emph{bridge}  to be a  path $\gamma \in \Gamma_{(x,y) \to
		(x',y')}$ such that all the integral points of
              $\gamma$ (except the points $(x,y)$ and $(x',y')$) have non-negative $Y$-coordinate. We denote the set of bridges between $(x,y)$ and $(x',y')$ by $\Gamma^{\geq 0}_{(x,y) \to
		(x',y')}$. 
            \end{definition}
            In the following, we will assign various operator-valued
            weights to a path $\gamma$. For a
            family of indeterminates $\uu := (u_1,\dots,u_r)$ and for
            a path $\gamma$ of length $r$ we assign a weight
         $\widetilde{\wt}(\gamma\,|\uu) :=\widetilde{\wt} (s_1|\uu)\cdots \widetilde{\wt} (s_{r}|\uu)$, where $s_i$ is
         the $i$-th step of $\gamma$ and
         \begin{equation}\label{eq:tildewt}
         	\widetilde{\wt} (s_i|\uu)
         = \begin{cases} \J^1_{-k} &\text{ if } s_i = (1,k), k\neq
           0,\\ u_{i}-\hbar \bb \ell &\text{ if }s_i = (1,0),
           \text{ and connects } (i-1,\ell) \text{ with }
           (i,\ell).\end{cases}
         \end{equation}

       \begin{theorem}\label{thm:rationalweight}
  Let $G(z) =
\frac{\prod_{i=1}^n\left(P_i+z\right)}{\prod_{i=1}^m\left(Q_i-z\right)}$. The
generating function $\tau^{(\bb)}_G$ defined by \eqref{eq:TauBGWeight'} is the unique formal power series
$\tau \in \Q(\QQ)[\PP, \tilde{\pp}, \bb](\!( \hbar)\!)\llbracket
t\rrbracket$ which satisfies the following PDE
  \begin{equation}
    \label{eq:CutAndJoinGeneralRational}
    t\cdot \sum_{\gamma \in \Gamma^{\geq 0}_{(0,-1) \to
          (n+1,0)}}\widetilde{\wt}(\gamma\,|(0,\PP))\tau =(-1)^m\sum_{\gamma \in \Gamma^{\geq 0}_{(0,-1) \to
          (m+2,-1)}}\widetilde{\wt}(\gamma\,|(0,-\QQ,0))\tau\end{equation}
  with the initial condition $\tau = 1 + O(t)$, where $(0,\PP) =
  (0,P_1,\dots, P_n)$ and $(0,-\QQ,0) =
  (0,-Q_1,\dots, -Q_m,0)$.
\end{theorem}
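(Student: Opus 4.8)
The plan is to derive the PDE from the defining expansion \eqref{eq:TauBGWeight'} by writing the rational weight $G$ as a product of linear factors and encoding the action of each factor on Jack polynomials via the Nazarov--Sklyanin operators packaged in \cref{cor:Boolean}. Concretely, recall that $\tau^{(\bb)}_G = \sum_{\lambda} c_\lambda \prod_{\square\in\lambda}G(\hbar\tilde c_\alpha(\square))\cdot \frac{J^{(\alpha)}_\lambda(\sqrt{\alpha}\tilde\pp)}{j^{(\alpha)}_\lambda}$, so extracting the $t$-degree $|\lambda|$ part and applying the operator $\J^1_{-1}$ (which by \eqref{eq:KerovMasses} records the sum over $\mu\nearrow\lambda$ with the Kerov masses) should relate the coefficient of $\prod_{\square\in\lambda}G(\cdot)$ to that of $\prod_{\square\in\mu}G(\cdot)$, with the missing factor $G(\hbar\tilde c_\alpha(\lambda\setminus\mu))$ appearing as an eigenvalue. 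The key algebraic identity is that multiplication by the value $G(\hbar y) = \frac{\prod_i(P_i+\hbar y)}{\prod_j(Q_j-\hbar y)}$ at the content $y = \tilde c_\alpha(\lambda\setminus\mu)$ can be realized operatorially: the polynomial numerator $\prod_i(P_i+\hbar y)$ expands as a polynomial in $y^\ell$ with $\ell\le n$, and by \cref{cor:Boolean} each $y^\ell$ is exactly the eigenvalue of $\hbar^{-\ell}\big(\J_-(L+\hbar\bb\Id)^\ell\J_+\big)$ weighted by the Kerov mass; the denominator $\prod_j(Q_j-\hbar y)^{-1}$ is handled by moving it to the other side of the equation, which is the origin of the two-sided structure in \eqref{eq:CutAndJoinGeneralRational}.

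The next step is to recognize that the generating series $\sum_{\gamma\in\Gamma^{\geq 0}_{(0,-1)\to(r,y_0)}}\widetilde{\wt}(\gamma|\uu)$ over bridges is precisely the bookkeeping device that assembles these $\J_-(L+\hbar\bb\Id)^\ell\J_+$ type operators into the full product over linear factors. A bridge from $(0,-1)$ starts with a first step that must go up (since intermediate points are non-negative), which supplies the $\J_+$; the final step down to level $y'$ supplies the $\J_-$; the flat steps $(1,0)$ at height $\ell$ contribute $u_i - \hbar\bb\ell$, which are exactly the matrix entries of $L + \hbar\bb\Id$ shifted by $u_i$ (so choosing $\uu = (0,\PP)$ inserts the $P_i$'s); and the non-flat intermediate steps $(1,k)$ contribute $\J^1_{-k}$, matching the off-diagonal entries $L_{k,\ell} = \J^1_{k-\ell}$. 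Thus the bridge sum on the left-hand side of \eqref{eq:CutAndJoinGeneralRational} literally equals a noncommutative product expressing $\prod_{i=1}^n(P_i + \hbar\,\tilde c_\alpha(\lambda\setminus\mu)) \cdot \J^1_{-1}$ acting diagonally with the Kerov masses, and similarly the right-hand bridge sum (length $m+2$, ending at level $-1$ again) expresses $\prod_{j=1}^m(Q_j - \hbar\,\tilde c_\alpha(\lambda\setminus\mu))\cdot\J^1_{-1}$; the sign $(-1)^m$ comes from the $-Q_j$ entries.

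I would organize the argument as follows. First, establish a lemma: for any partition $\lambda$ and any bridge sum of the above form, $\big(\sum_\gamma \widetilde{\wt}(\gamma|\uu)\big)\frac{J^{(\alpha)}_\lambda}{j^{(\alpha)}_\lambda}$ equals $\frac{\hbar}{\sqrt\alpha}\sum_{\mu\nearrow\lambda}\frac{\langle \J^1_{-1}J^{(\alpha)}_\mu, J^{(\alpha)}_\lambda\rangle}{j^{(\alpha)}_\mu}\big(\prod_i(u_i + \hbar\tilde c_\alpha(\lambda\setminus\mu))\big)\frac{J^{(\alpha)}_\lambda}{j^{(\alpha)}_\lambda}$ up to a precise normalization — this is a matrix-product reformulation of \cref{cor:Boolean} combined with the combinatorial identification of bridges with walks in the matrix $L$. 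Second, multiply \eqref{eq:TauBGWeight'} through by the appropriate factors: applying $t$ times the left bridge operator to $\tau^{(\bb)}_G$ picks out, in $t$-degree $|\lambda|+1$, the coefficient $c_{|\lambda|}\prod_{\square\in\mu}G(\cdot)$ summed over $\mu\nearrow\lambda$ with numerator-of-$G$ weights and Kerov masses; applying the right bridge operator to $\tau^{(\bb)}_G$ in the same degree gives the same sum but with denominator-of-$G$ weights, and the identity $\frac{\prod_i(P_i+\hbar y)}{\prod_j(Q_j-\hbar y)} = G(\hbar y)$ makes the two sides agree term by term (this is where one checks the compatibility of the normalization constants $\big(\frac{\sqrt\alpha}{\hbar}\big)^n$, the Kerov mass formula \eqref{eq:KerovMasses}, and Pieri-type expansion of $\J^1_{-1}J^{(\alpha)}_\mu$). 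Third, uniqueness: the PDE is triangular in $t$-degree — the left-hand side in $t$-degree $N+1$ involves only $\tau$ in degrees $\le N$ and determines the degree-$N$ part (because the leading bridge sum acts invertibly on the relevant span, as $\J^1_{-1}$ raising is injective on symmetric functions), so together with $\tau = 1 + O(t)$ the solution is unique in $\Q(\QQ)[\PP,\tilde\pp,\bb](\!(\hbar)\!)\llbracket t\rrbracket$. The main obstacle I anticipate is the second step: carefully matching the two bridge sums after clearing the rational denominator, since one must verify that moving $\prod_j(Q_j - \hbar y)$ across corresponds exactly to the length-$(m+2)$ bridges with the padded weight vector $(0,-\QQ,0)$ — in particular tracking why two extra flat/up-down steps appear (they account for the $\J_+\cdots\J_-$ "cap" that the denominator factors must be sandwiched between) and confirming the sign and the $\hbar$-powers line up. This is essentially a bookkeeping verification but it is the crux, and it is cleanest to do it first for $G$ a polynomial ($m=0$), then for $1/G$ a polynomial ($n=0$), and finally combine.
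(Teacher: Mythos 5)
Your overall strategy (Jack expansion, Kerov masses, bridge combinatorics, triangularity in $t$) is the same circle of ideas as the paper's, but the crucial step is missing and the key lemma you propose is not correct as stated. The two bridge sums in \eqref{eq:CutAndJoinGeneralRational} are structurally different operators: the right-hand bridges end at height $-1$, so they are of the sandwiched form $\J_-M\J_+$, preserve the $\tilde{\pp}$-degree, and do act diagonally on Jack polynomials --- this is exactly \cref{cor:Boolean} (extended to nonzero $\uu$ by expanding horizontal-step weights in elementary symmetric functions, which is fine). The left-hand bridges, however, end at height $0$: they have net mode sum $-1$, raise the degree by one box, and are \emph{not} walks in the matrix $L$, so they cannot act diagonally on $J^{(\alpha)}_\lambda$ and cannot be obtained by any ``matrix-product reformulation of \cref{cor:Boolean}.'' Your stated lemma (the bridge sum acting on $J^{(\alpha)}_\lambda/j^{(\alpha)}_\lambda$ as a scalar given by a Kerov-mass-weighted sum over $\mu\nearrow\lambda$) is therefore false for the left-hand side, and what is actually needed there is a different identity: that the Jack matrix elements of the height-$0$-ending bridge operators equal $\prod_i\big(u_i+\hbar\tilde c_\alpha(\mu\setminus\lambda)\big)$ times $\langle \J^1_{-1}J^{(\alpha)}_\lambda, J^{(\alpha)}_\mu\rangle$, with a \emph{different} scalar attached to each box-transition rather than a single eigenvalue. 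In the paper this is where the numerator side is encoded by $\Ad^{i}_{\tilde D_\alpha}(\J^1_{-1})$, whose path expansion is \cref{lem:AdjointPaths} (Chapuy--Do\l{}\k{e}ga, Thm.~4.7), and the factors $\hbar\tilde c_\alpha(\mu\setminus\lambda)$ are peeled off one at a time by induction using the Laplace--Beltrami eigenvalue equation \eqref{eq:Laplace--BeltramiAction}; nothing in your proposal supplies this commutator mechanism, and Nazarov--Sklyanin alone cannot.

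Two smaller points. First, your uniqueness argument runs backwards: at order $t^{N+1}$ the equation reads $(\text{LHS op})[t^N]\tau=(\text{RHS op})[t^{N+1}]\tau$, so what must be invertible is the right-hand (diagonal) operator --- its leading term is $Q_1\cdots Q_m$ times the Euler operator $\sum_{k\geq1}\J^1_{-k}\J^1_k$ --- not injectivity of $\J^1_{-1}$; and in the ring $\Q(\QQ)[\PP,\tilde{\pp},\bb](\!(\hbar)\!)\llbracket t\rrbracket$ one also has to clear $\QQ$-denominators and grade in $\PP,\QQ$, as the paper does. Second, note that the paper does not prove the rational case directly: it first proves the general cut-and-join \cref{theo:GeneralCutJoin} with $G=H F^{-1}$ (diagonal operators for $F$, adjoint operators for $H$) and then passes to the $\uu$-weighted bridges by the subset-of-horizontal-steps expansion; your direct $\uu$-weighted formulation is a legitimate repackaging of that last step, but it does not remove the need for the adjoint-operator identity on the $\PP$-side.
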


\noindent We will deduce \cref{thm:rationalweight} from a more general result.

         \begin{theorem}
           \label{theo:GeneralCutJoin}
Suppose that there exist invertible formal power series $F(z) :=
\sum_{i\geq 0}f_i
z^i$, and $H(z) :=
\sum_{i\geq 0}h_i
z^i$ such that
\begin{equation}
  \label{eq:EquationForG}
  G(z) := \sum_{i\geq 0} g_i z^i =  H(z)\cdot F(z)^{-1}.
  \end{equation}
The
generating function $\tau^{(\bb)}_G$ defined by \eqref{eq:TauBGWeight'} is the unique formal power series
$\tau \in \Q[\tilde{\pp}, f_0^{-1},f_1,f_2,\dots,\hh,\bb](\!( \hbar)\!)\llbracket
t\rrbracket$ which satisfies the following PDE
\begin{equation}
  \label{eq:GeneralCutJoin}
  t \sum_{i \geq 0}h_i\sum_{\gamma \in \Gamma^{\geq 0}_{(0,-1) \to
          (i+1,0)}}\widetilde{\wt}(\gamma\,|0)\tau = \sum_{i \geq
     0}f_{i}\sum_{\gamma \in \Gamma^{\geq 0}_{(0,-1) \to
          (i+2,-1)}}\widetilde{\wt}(\gamma\,|0)\tau.
    \end{equation}
      with the initial condition $\tau = 1 + O(t)$. Moreover
      $|\mu|\HurGbbg(\mu_1,\dots,\mu_n) \in \Z_{\geq
        0}[-\bb][\g]$ is
      a polynomial in $-\bb,\g$  with
      non-negative integer coefficients. In addition, when $2g$ is odd (resp. even), the  non-zero coefficients of $|\mu|\HurGbbg(\mu_1,\ldots,\mu_n) $ have odd (resp. even) powers of $ -\bb$.
    \end{theorem}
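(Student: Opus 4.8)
\emph{Overview.} The plan is to prove the cut‑and‑join PDE \eqref{eq:GeneralCutJoin} by rewriting the two bridge sums as Nazarov--Sklyanin matrix operators and comparing coefficients of Jack polynomials, and then to read off uniqueness, positivity and the parity statement from the recursion this PDE encodes. Throughout I would use that, after $\tilde p_k=\sqrt\alpha^{-1}p_k$, the $\alpha$‑deformed Hall product becomes the classical one, so $\J^1_k$ and $\J^1_{-k}$ are mutually adjoint, $M:=L+\hbar\bb\Id$ is self‑adjoint, and $\{J^{(\alpha)}_\la/\sqrt{j^{(\alpha)}_\la}\}$ is orthonormal.

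\emph{Step 1: the PDE.} First I would set up the dictionary between weighted bridges and $M$: identifying height $h$ with matrix index $h+1$, a step $(1,k)$ with $k\neq0$ is the entry $M_{h+1,h+k+1}=\J^1_{-k}$, a flat step at height $h$ is the diagonal entry $M_{h+1,h+1}=-h\hbar\bb$ (this is where the shift ``$+\hbar\bb\Id$'' comes from), and the condition ``interior points have $Y\ge0$'' matches $M$ being indexed by $\Z_{\ge1}$. This yields
\begin{equation*}
\sum_{\gamma\in\Gamma^{\geq 0}_{(0,-1)\to(\ell+2,-1)}}\widetilde{\wt}(\gamma\,|0)=\J_-\,M^{\ell}\,\J_+,\qquad
\sum_{\gamma\in\Gamma^{\geq 0}_{(0,-1)\to(\ell+1,0)}}\widetilde{\wt}(\gamma\,|0)=\big(\J_-\,M^{\ell}\big)_1 ,
\end{equation*}
the endpoint $Y=0$ in the second sum amounting to fixing the last matrix index to $1$ (using $M_{k,1}=\J^1_{k-1}$). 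By \cref{cor:Boolean}, $\sum_if_i\J_-M^i\J_+$ acts on $J^{(\alpha)}_\la$ diagonally with eigenvalue $\frac{\hbar}{\sqrt\alpha}\sum_{\mu\nearrow\la}\frac{\langle \J^1_{-1}J^{(\alpha)}_\mu,J^{(\alpha)}_\la\rangle}{j^{(\alpha)}_\mu}F(\hbar\tilde c_\alpha(\la\setminus\mu))$; and I would prove the companion statement that $\sum_ih_i(\J_-M^i)_1$ is the content‑twisted box‑adder, $\sum_ih_i(\J_-M^i)_1J^{(\alpha)}_\mu=\sum_{\la:\mu\nearrow\la}\frac{\langle \J^1_{-1}J^{(\alpha)}_\mu,J^{(\alpha)}_\la\rangle}{j^{(\alpha)}_\la}H(\hbar\tilde c_\alpha(\la\setminus\mu))J^{(\alpha)}_\la$. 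Expanding $\tau^{(\bb)}_G=\sum_\la a_\la J^{(\alpha)}_\la$ via \eqref{eq:TauBGWeight'} (so $a_\la$ carries the multiplicative weight $w(\la):=\prod_{\square\in\la}G(\hbar\tilde c_\alpha(\square))$ and the power $t^{|\la|}$), equation \eqref{eq:GeneralCutJoin} reduces, $J^{(\alpha)}_\la$‑coefficient by $J^{(\alpha)}_\la$‑coefficient, to $w(\la)F(\hbar\tilde c_\alpha(\la\setminus\mu))=w(\mu)H(\hbar\tilde c_\alpha(\la\setminus\mu))$, which is exactly multiplicativity of $w$ together with $GF=H$. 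Taking $H=\prod_i(P_i+z)$, $F=\prod_i(Q_i-z)$ makes the sums finite and recovers \cref{thm:rationalweight}.

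\emph{Step 2: the main obstacle.} I expect the hard part to be the second operator identity. Unlike $\J_-M^i\J_+$, which is essentially \cref{cor:Boolean}, the truncated operator $(\J_-M^i)_1$ is not self‑adjoint and acts off‑diagonally, so one must upgrade the ``measure‑the‑content'' statement of Nazarov--Sklyanin to a ``raise‑and‑weight‑the‑content'' statement. My approach: pair $(\J_-M^i)_1J^{(\alpha)}_\mu$ with $J^{(\alpha)}_\la$, move $(\J_-M^i)_1$ to its adjoint $(M^i\J_+)_1$ via self‑adjointness of $M$, use that only the entry $\J^1_1$ of $\J_+$ changes $\tilde p$‑degree by exactly $-1$ to collapse the sum to a single box move, and extract the eigenvalue $(\hbar\tilde c_\alpha(\la\setminus\mu))^i$ from the same localisation of the Nazarov--Sklyanin current that produces the Boolean cumulants of \eqref{eq:defBoolean}, via the Jack--Pieri rule and Kerov's transition measure. (Alternatively, interchanging ``up'' and ``down'' bridges swaps the roles of $H$ and $F$, so the already‑proved half of \cref{thm:rationalweight} pins down the other half.)

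\emph{Step 3: uniqueness.} The PDE is triangular in the $t$‑grading: the left‑hand side carries an explicit $t$ and each up‑bridge raises the $\tilde p$‑degree by $1$, so its $t^N$‑part involves only the $t$‑degree $<N$ part of $\tau$; the right‑hand side preserves $t$‑degree, and by Step 1 the operator $\sum_if_i\J_-M^i\J_+$ acts on the degree‑$N$ part diagonally in $\{J^{(\alpha)}_\la\}_{|\la|=N}$ with eigenvalue $\hbar^2 f_0|\la|+O(\hbar^3)$, invertible in $\Q(\alpha)(\!(\hbar)\!)$ because $f_0$ is invertible and the co‑transition measure has total mass $1$, i.e.\ $\sum_{\mu\nearrow\la}\frac{\langle \J^1_{-1}J^{(\alpha)}_\mu,J^{(\alpha)}_\la\rangle}{\hbar\sqrt\alpha|\la|j^{(\alpha)}_\mu}=1$. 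Hence \eqref{eq:GeneralCutJoin} with $\tau|_{t=0}=1$ determines $\tau$ uniquely, degree by degree, and since $\tau^{(\bb)}_G$ solves it and the inverse operator has coefficients in $\Q[\tilde{\pp},f_0^{-1},f_1,f_2,\dots,\hh,\bb](\!(\hbar)\!)$, the unique solution lies in the stated ring.

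\emph{Step 4: positivity and parity.} For the last two assertions I would take $H=G$, $F=1$ (legitimate, as $F\equiv1$ is invertible); the right‑hand side of \eqref{eq:GeneralCutJoin} then collapses to $\J_-\J_+\tau=\hbar^2$ times the Euler operator, turning the PDE into $\hbar^2\partial_t\tau=\sum_ig_i\big(\sum_{\gamma\in\Gamma^{\geq 0}_{(0,-1)\to(i+1,0)}}\widetilde{\wt}(\gamma\,|0)\big)\tau$, solved by $\tau^{(\bb)}_G=\exp\!\big(t\hbar^{-2}\sum_ig_i\sum_\gamma\widetilde{\wt}(\gamma\,|0)\big)\cdot 1$. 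Expanding $\log\tau^{(\bb)}_G$ by the exponential/cluster formula exhibits $|\mu|\,\HurGbbg(\mu_1,\dots,\mu_n)$ — the factor $|\mu|$ being precisely what the Euler operator contributes — as a sum over connected arrangements of weighted up‑bridges glued by Wick contractions of the $\tilde p$'s against the $\partial_{\tilde p}$'s; every weight that appears ($g_i$ from the edges, positive integers from the contractions and from the $\J^1_{\pm k}$, and $-\hbar\bb\cdot(\text{height})$ from the flat steps) lies in $\Z_{\ge0}[-\bb][\g]$, giving $|\mu|\HurGbbg\in\Z_{\ge0}[-\bb][\g]$. For the parity, put $\deg_2(\hbar)=\deg_2(\bb)=\deg_2(\tilde p_k)=\deg_2(\partial_{\tilde p_k})=1$ and $\deg_2(g_i)=\deg_2(t)=0$; then each factor of each $\widetilde{\wt}(\gamma\,|0)$ and the operator $\J_-\J_+$ is $\mathbb Z/2$‑homogeneous of degree $0$, hence so are $\tau^{(\bb)}_G$ and $\log\tau^{(\bb)}_G$, and balancing degrees in a monomial $\hbar^{2g-2+n}(-\bb)^k(\cdots)\tilde p_\mu$ with $\ell(\mu)=n$ forces $(2g-2+n)+k+n\equiv0\pmod2$, i.e.\ $k\equiv 2g\pmod2$, which is exactly the claimed parity.
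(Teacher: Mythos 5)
Your Step 1 skeleton (bridges as entries of $M=L+\hbar\bb\Id$, diagonal action of $\sum_i f_i\,\J_-M^i\J_+$ via \cref{cor:Boolean}, coefficientwise comparison in the Jack basis, and the uniqueness argument with leading term $f_0\hbar^2\cdot$Euler) is essentially the paper's computation reorganized, and your parity argument via the $\Z/2$-grading is a correct, self-contained alternative. But the whole verification of \eqref{eq:GeneralCutJoin} rests on your ``companion statement'' that the up-bridge sum $\sum_{\gamma\in\Gamma^{\geq0}_{(0,-1)\to(i+1,0)}}\widetilde{\wt}(\gamma\,|0)$ acts on $J^{(\alpha)}_\mu$ as the content-twisted box adder, and this is exactly where your proposal has no proof. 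The sketch in Step 2 does not work as stated: after passing to the adjoint $\sum_k (M^i)_{1,k}\J^1_k$, \emph{every} composite $(M^i)_{1,k}\J^1_k$ shifts the $\tilde p$-degree by exactly $-1$ (the matrix factor raises degree by $k-1$), so the degree count does not collapse the sum to a single box move; ``the same localisation of the Nazarov--Sklyanin current'' is an appeal, not a construction; and the fallback via ``the already-proved half of \cref{thm:rationalweight}'' is circular, since \cref{thm:rationalweight} is deduced from the present theorem. The paper closes this gap with a different idea that your route never introduces: the up-bridge sum is identified with $\Ad^{i}_{\tilde{D}_{\alpha}}(\J^1_{-1})$ (\cref{lem:AdjointPaths}, resting on \cite[Thm.~4.7]{ChapuyDolega2022}), after which the twist $(\hbar\tilde{c}_\alpha(\la\setminus\mu))^{i}$ follows from a one-line induction using \eqref{eq:Laplace--BeltramiAction}. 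Without this (or an equivalent substitute) the PDE itself is unproven.

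There is a second genuine gap in Step 4. Granting the PDE, your closed formula $\tau^{(\bb)}_G=\exp\big(t\hbar^{-2}\sum_i g_i O_i\big)\cdot 1$ is correct (each $O_i$ raises $\tilde p$-degree by exactly $1$, so this solves the $H=G$, $F=1$ equation, and uniqueness applies), but the cluster expansion then gives $\log\tau^{(\bb)}_G=\sum_m \frac{t^m}{m!\,\hbar^{2m}}c_m$ with $c_m$ a nonnegative-integer-weighted sum over \emph{ordered} connected configurations; hence what you extract for $|\mu|\,\HurGbbg$ is a coefficient of $c_{|\mu|}$ divided by $(|\mu|-1)!$. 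This yields nonnegative \emph{rational} coefficients, but the claimed integrality over $\Z_{\geq0}[-\bb][\g]$ requires divisibility by $(|\mu|-1)!$, which is not termwise true (a linear chain of $m$ bridges, each derivative contracting into the next bridge, admits exactly one admissible ordering) and is not addressed by the remark that ``the factor $|\mu|$ is what the Euler operator contributes'' -- that accounts for one factor of $|\mu|$ out of $|\mu|!$. The paper does not attempt a self-contained proof here: from the PDE it deduces only $|\mu|\HurGbbg\in\Q[\bb][\g]$ and then imports integrality and positivity (in the variable $-\sqrt{\alpha}\bb$) from \cref{theo:CD22}, combining the two to get both the $\Z_{\geq0}[-\bb]$ statement and the parity. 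To complete your Step 4 you must either invoke \cref{theo:CD22} in the same way or supply a genuinely new combinatorial argument for the integrality; as written, the positivity/integrality claim is not established.
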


    \begin{remark}
      Note that one can always pick $H(z) = G(z)$, and $F(z)=1$. For
      this choice, \eqref{eq:GeneralCutJoin} recovers the cut-and-join
      equation \cite[Eq. (62)]{ChapuyDolega2022}.
      \end{remark}

           Recall that $\Ad_A(B) := [A,B]$ is the
           adjoint operator and that $\tilde{D}_\alpha$ is the
           Laplace--Beltrami operator given by
           \eqref{eq:Laplace--BeltramiDef}. Before we prove \cref{theo:GeneralCutJoin} we need the
           following lemma.

           \begin{lemma}
             \label{lem:AdjointPaths}
             For any $\ell \geq 0$ one has:
             \begin{equation*}
               \label{eq:adjoin}
               \Ad^{i}_{\tilde{D}_{\alpha}}(\J^1_{-1}) = \sum_{\gamma \in \Gamma^{\geq 0}_{(0,-1) \to
          (i+1,0)}}\widetilde{\wt}(\gamma\,|0).
               \end{equation*}
             \end{lemma}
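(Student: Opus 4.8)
The plan is to induct on $i$, using the defining formula~\eqref{eq:Laplace--BeltramiDef} for $\tilde{D}_\alpha$ and the Heisenberg commutation relations $[\J^1_k,\J^1_\ell]=k\hbar^2\delta_{k+\ell,0}$. The base case $i=0$ is the statement that $\J^1_{-1}=\sum_{\gamma\in\Gamma^{\geq0}_{(0,-1)\to(1,0)}}\widetilde{\wt}(\gamma\,|0)$: a path of length $1$ from $(0,-1)$ to $(1,0)$ consists of the single step $(1,1)$, whose weight is $\widetilde{\wt}((1,1)|0)=\J^1_{-1}$ by~\eqref{eq:tildewt} (the step is nonhorizontal), and there is nothing to check about non-negativity of intermediate $Y$-coordinates since a length-one path has no intermediate points. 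For the inductive step, I would compute $\Ad_{\tilde D_\alpha}\bigl(\sum_{\gamma\in\Gamma^{\geq0}_{(0,-1)\to(i+1,0)}}\widetilde{\wt}(\gamma\,|0)\bigr)$ directly and show it equals $\sum_{\gamma\in\Gamma^{\geq0}_{(0,-1)\to(i+2,0)}}\widetilde{\wt}(\gamma\,|0)$.

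The heart of the argument is a \emph{local} computation: taking the bracket of $\tilde D_\alpha$ with a monomial $\widetilde{\wt}(s_1|0)\cdots\widetilde{\wt}(s_{i+1}|0)$ distributes (by the Leibniz rule for $\Ad$) over the factors, and when $\tilde D_\alpha$ hits a single factor $\J^1_{-k}$ it produces — via the three terms of~\eqref{eq:Laplace--BeltramiDef}, each of which is a normally-ordered expression in the $\J^1$'s — exactly the local moves that extend a path by one step while preserving the bridge condition. Concretely: the term $\frac{\hbar^{-2}}{2}\sum_{k}\J^1_{-k}\sum_\ell\J^1_\ell\J^1_{k-\ell}$ in $\tilde D_\alpha$, when commuted past a down-step $\J^1_{-k}$ sitting at height $\ell$, produces terms that either split that step into two (inserting a new vertex) or raise/lower the endpoint heights — matching the combinatorics of inserting a step $(1,k')$; the term with $\sum p_k p_\ell\partial_{p_{k+\ell}}$ merges two consecutive steps; and the term $(\alpha-1)\sum k(k-1)p_k\partial_{p_k}$, which in the $\J^1$-normalization is proportional to $\hbar\bb\sum_k(k-1)\J^1_{-k}\J^1_k$, contributes the horizontal-step weights $-\hbar\bb\ell$ recorded at height $\ell$ in~\eqref{eq:tildewt}. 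I would organize this by classifying, for a path $\gamma'$ of length $i+2$, all ways it arises from a path $\gamma$ of length $i+1$ by one such local insertion, and check that the signs and $\hbar$-powers coming from the commutators reproduce $\widetilde{\wt}(\gamma'\,|0)$ with coefficient one.

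The main obstacle I anticipate is bookkeeping the non-negativity (bridge) constraint through the commutator: one must verify that the moves produced by $\Ad_{\tilde D_\alpha}$ never create a path dipping below the $X$-axis at an intermediate point, and conversely that \emph{every} bridge of length $i+2$ is hit exactly once. This is a delicate cancellation/matching argument — the individual terms of~\eqref{eq:Laplace--BeltramiDef} will generate paths violating the constraint, and these must cancel in pairs (or be shown to vanish because a $\J^1_0=0$ factor appears, cf.~\eqref{eq:J1def}). A clean way to handle it is to first prove the analogous identity \emph{without} the bridge constraint (which is a purely algebraic statement about $\Ad_{\tilde D_\alpha}$ acting on words in the Heisenberg algebra, provable by a generating-function manipulation), and then observe that the offending below-axis contributions are precisely those in which some intermediate step would force a $\J^1_0$ to appear as a \emph{creation} operator acting to produce zero, or cancel against a partner term; tracking this via the explicit form of~\eqref{eq:Laplace--BeltramiDef} is where the real work lies. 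Once the local move-set and its weights are pinned down, the induction closes formally.
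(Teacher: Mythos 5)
Your base case is fine, but the heart of your plan — that expanding $\Ad_{\tilde D_\alpha}$ by the Leibniz rule produces ``local moves'' hitting every bridge of length $i+2$ exactly once with coefficient one — is not established, and as described it would not go through. Computing the commutators explicitly, for $m\geq 1$ one gets $[\tilde D_\alpha,\J^1_{-m}]=\tfrac{m}{2}\bigl(\sum_{k\geq1}\J^1_{-k}\J^1_{k-m}+\sum_{k>m}\J^1_{-k}\J^1_{k-m}\bigr)-\tfrac{m(m-1)}{2}\hbar\bb\,\J^1_{-m}$, and an analogous formula for $[\tilde D_\alpha,\J^1_{m}]$. So each single replacement comes with a coefficient proportional to $m/2$, never $1$, and it always splits one factor into (at most) two — no term ever ``merges two consecutive steps'', contrary to your reading of the $\sum p_kp_\ell\partial_{p_{k+\ell}}$ term; in a commutator with a single mode each term of $\tilde D_\alpha$ acts the same way. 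A given bridge of length $i+2$ is therefore produced from several positions $j$ in several different shorter bridges, with fractional coefficients that must recombine; worse, distinct bridges can be equal as operators (e.g. $\J^1_{-1}\J^1_{-2}\J^1_{2}=\J^1_{-2}\J^1_{-1}\J^1_{2}$), so there is no bridge-by-bridge bijection at all — the matching can only be done in a PBW basis with multiplicities. Concretely, at $i=1$ one has $\Ad_{\tilde D_\alpha}(\J^1_{-1})=\sum_{y\geq1}\J^1_{-y-1}\J^1_{y}$, and in $[\tilde D_\alpha,\sum_{y\geq1}\J^1_{-y-1}\J^1_{y}]$ the coefficient of the normally ordered monomial $\J^1_{-2}\J^1_{-1}\J^1_{2}$ arises as $\tfrac32+\tfrac32-\tfrac12-\tfrac12=2$, matching the \emph{two} bridges with height sequences $(-1,0,2,0)$ and $(-1,1,2,0)$ whose weights coincide as operators; similarly the horizontal-step weight $-\hbar\bb\,\ell$ only appears after combining the $\bb$-terms coming from two adjacent factors ($-\tfrac{y(y+1)}{2}+\tfrac{y(y-1)}{2}=-y$). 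Your fallback of first proving an ``unconstrained'' identity is also not available: the sum over all paths (bridges and non-bridges) is a genuinely different operator, since below-axis paths generally have nonzero weight; the bridge restriction is part of the content of the identity, not bookkeeping to be restored afterwards.

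For comparison, the paper does not run this induction. It invokes Theorem 4.7 of Chapuy--Do\l{}\k{e}ga, which already packages the iterated adjoint as $\Ad^{i}_{\tilde D_\alpha}(\J^1_{-1})=\sum_{k\geq0}\J^1_{-k-1}\partial_{y_k}\Lambda_Y^{i}\,y_0$ for an explicit transfer operator $\Lambda_Y$ in auxiliary variables $y_k$ recording the current height; the only thing left is a one-line induction showing $\Lambda_Y^{i}y_0=\sum_{k\geq0}y_k\sum_{\gamma\in\Gamma^{\geq0}_{(1,k)\to(i+1,0)}}\widetilde{\wt}(\gamma\,|0)$, using $\Lambda_Y y_\ell=\sum_{k\geq0}y_k\sum_{\gamma\in\Gamma^{\geq0}_{(1,k)\to(2,\ell)}}\widetilde{\wt}(\gamma\,|0)$. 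All of the delicate recombination you would have to control is hidden in that cited result; a self-contained proof along your lines would essentially amount to reproving it, which is substantially more work than your sketch acknowledges.
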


             \begin{proof}
               Chapuy and the second author proved
               in~\cite[Theorem 4.7]{ChapuyDolega2022} that
               \[\Ad^{i}_{\tilde{D}_{\alpha}}(\J^1_{-1})  =
                 \sum_{k\geq
                   0}\J^1_{-k-1}\partial_{y_k}\Lambda_{Y}^{i} y_0, \]
               where
\[
\Lambda_{Y} := \sum_{k,\ell\geq 1}y_{k+\ell-1}\partial_{y_{\ell-1}}\J^1_k +\sum_{k,\ell\geq
  1}y_{\ell-1}\partial_{y_{k+\ell-1}}\J^1_{-k}
-\hbar\bb\cdot\sum_{k\geq
  0}k\cdot y_{k}\partial_{y_k}.\]
Therefore it is enough to prove that
\[ \Lambda_{Y}^{i} y_0 = \sum_{k\geq
                   0}y_{k}\sum_{\gamma \in \Gamma^{\geq 0}_{(1,k) \to
                     (i+1,0)}}\widetilde{\wt}(\gamma\,|0)\]
               with the convention that $\Gamma_{(x,y) \to
                     (x,y)} = \Gamma^{\geq 0}_{(x,y) \to
                     (x,y)} = \{(x,y)\}$, and
                   $\widetilde{\wt}((x,y)\,|0) = 1$. This identity
                   follows easily by induction on $i$ by noticing that
                   \[\Lambda_{Y} y_\ell = \sum_{k \geq 0}y_k \sum_{\gamma \in \Gamma^{\geq 0}_{(1,k) \to
                     (2,\ell)}}\widetilde{\wt}(\gamma\,|0). \qedhere\]
               \end{proof}

               \begin{proof}[Proof of \cref{theo:GeneralCutJoin}]
                 In order to make the notation lighter we write $\tau =
        \tau^{(\bb)}_G$, $J_\lambda=J_\lambda^{\al}(\sqrt{\alpha}\tilde{\pp})$, $
        j_\lambda=j_\lambda^{\al}$, $c(\square) = \tilde{c}_{\alpha}(\square)$, and we define
	\[ \tilde{J}_\lambda := \frac{J_\la}{j_\la}\left(\frac{\sqrt{\alpha}}{\hbar}\right)^{|\lambda|}\prod_{\square
            \in \lambda}G(\hbar \tilde{c}(\square)).\]
 Notice that
 \begin{multline*}
   \label{eq:Multp1'}
   h_0 \cdot \J^1_{-1}\tilde{J}_\lambda =
   h_0\cdot \frac{\hbar}{\sqrt{\alpha}} \sum_{\la
     \nearrow \mu}\frac{\langle \J^1_{-1} J_\lambda, J_\mu\rangle}{j_\la} G(\hbar \tilde{c}(\mu\setminus\lambda))^{-1}\tilde{J}_\mu \\
   = \frac{\hbar}{\sqrt{\alpha}} \sum_{\la \nearrow \mu}\frac{\langle \J^1_{-1} J_\lambda, J_\mu\rangle}{j_\la}\left(\sum_{i \geq
     0}f_i (\hbar \tilde{c}(\mu\setminus\lambda))^{i}-G(\hbar \tilde{c}(\mu\setminus\lambda))^{-1}\sum_{i \geq
     1}h_i (\hbar \tilde{c}(\mu\setminus\lambda))^{i}\right)
   \tilde{J}_\mu
   \end{multline*}
        by our assumption \eqref{eq:EquationForG}. Therefore
       \[ [t^{n+1}]t\cdot h_0 \cdot   \J^1_{-1}\cdot\tau\]
       can be decomposed as a difference of two terms:
\[A_1 = \frac{\hbar}{\sqrt{\alpha}} \sum_{\lambda \vdash n}\sum_{\mu \vdash n+1}\bigg(\sum_{i \geq
     0}f_i (\hbar \tilde{c}(\mu\setminus\lambda))^{i} \bigg)\frac{\langle \J^1_{-1}
      J_\lambda, J_\mu\rangle}{j_\la} \tilde{J}_\mu=\bigg(\sum_{i \geq
     0}f_i\J_-(L+\hbar\bb\Id)^i \J_+ \bigg)[t^{n+1}]\tau\]
by \cref{cor:Boolean}, and
\[         A_2 = \frac{\hbar}{\sqrt{\alpha}}\sum_{\lambda \vdash n}\sum_{\mu \vdash n+1}\sum_{i \geq
     1}h_i(\hbar \tilde{c}(\mu\setminus\lambda))^{i} G(\hbar \tilde{c}(\mu\setminus\lambda))^{-1}\frac{\langle \J^1_{-1}
      J_\lambda, J_\mu\rangle}{ j_\la}
               \tilde{J}_\mu.\]
             We claim that
             \[ A_2 = \bigg(\sum_{i \geq
                 1}h_{i}\Ad^{i}_{\tilde{D}_\alpha}(\J^1_{-1})\bigg)[t^{n}]\tau,\]
             which is equivalent to proving that for each $i \geq 1$
             one has
             \[ \sum_{\lambda \vdash n}\Ad^{i}_{\tilde{D}_\alpha}(\J^1_{-1})
               \tilde{J}_\lambda = \frac{\hbar}{\sqrt{\alpha}}\sum_{\lambda \vdash n}\sum_{\mu
                 \vdash n+1}(\hbar \tilde{c}(\mu\setminus\lambda))^{i} G(\hbar \tilde{c}(\mu\setminus\lambda))^{-1}\frac{\langle \J^1_{-1}
      J_\lambda, J_\mu\rangle}{ j_\la}
    \tilde{J}_\mu.\]
  For
  $i = 0$ the above equality is immediate from the definition, and for $i \geq 1$ we will proceed by
  induction: 
  \begin{multline*}
    \sum_{\lambda \vdash n}\Ad^{i}_{\tilde{D}_\alpha}(\J^1_{-1}) \tilde{J}_\lambda
               = \frac{\hbar}{\sqrt{\alpha}}\sum_{\lambda \vdash n}\sum_{\mu \vdash
                 n+1}(\hbar\tilde{c}(\mu\setminus\lambda))^{i-1}
               G(\hbar \tilde{c}(\mu\setminus\lambda))^{-1}\frac{\langle \J^1_{-1}
      J_\lambda, J_\mu\rangle}{ j_\la}\cdot\\
               \cdot\left(\tilde{D}_\alpha\tilde{J}_\mu-\hbar\sum_{\square \in
                   \lambda}\tilde{c}(\square)\tilde{J}_\mu\right) = \frac{\hbar}{\sqrt{\alpha}}\sum_{\lambda \vdash
                 n}\sum_{\mu \vdash n+1}(\hbar
               \tilde{c}(\mu\setminus\lambda))^{i} G(\hbar \tilde{c}(\mu\setminus\lambda))^{-1}\frac{\langle \J^1_{-1}
      J_\lambda, J_\mu\rangle}{ j_\la}
    \tilde{J}_\mu,
    \end{multline*}
    where we have used \eqref{eq:Laplace--BeltramiAction}.
    Finally, \cref{lem:AdjointPaths} implies that
    \[A_2 = \sum_{i=1}^\infty h_i\sum_{\gamma \in \Gamma^{\geq 0}_{(0,-1) \to
          (i+1,0)}}\widetilde{\wt}(\gamma\,|0)[t^n]\tau\]
    and the fact
    that
    \[A_1 = \sum_{i=0}^\infty f_i\sum_{\gamma \in \Gamma^{\geq 0}_{(0,-1) \to
          (i+2,-1)}}\widetilde{\wt}(\gamma\,|0)[t^{n+1}]\tau\]
    follows
      easily from the interpretation of the operator $\J_-(L+\hbar\bb\Id)^i
      \J_+$ as a sum over paths -- see for instance \cite[Theorem
      (3.10)]{CuencaDolegaMoll2026}.

             To prove uniqueness, suppose  that there are
             two different solutions $\tau_1$, and
             $\tau_2$
             of~\eqref{eq:GeneralCutJoin}. Then,
             $\tau' := \tau_1 - \tau_2 = O(t)$
             also satisfies~\eqref{eq:GeneralCutJoin}. Let
             $k \geq 1$ be the smallest $k$ such that $[t^k]\tau'
             \neq 0$. Our assumption on $k$ guarantees that the LHS
of~\eqref{eq:GeneralCutJoin} acting on $\tau'$ gives a
formal power series which is $O(t^{k+1})$. On the other hand,
extracting the coefficient $[t^k]$ of the action of the RHS
of~\eqref{eq:GeneralCutJoin} on $\tau'$ is the same as the action of
the RHS of~\eqref{eq:GeneralCutJoin} on $[t^k]\tau'$. One can introduce a grading on $\Q[\tilde{\pp}, f_0^{-1},f_1,\dots,\hh,\bb](\!( \hbar)\!)\llbracket
t\rrbracket$ by giving the only non-zero grading to $\deg(f_i) = \deg(h_i) = 1$ for $i \in
\Z_{\geq 1}$. Then the RHS of~\eqref{eq:GeneralCutJoin} can be written as
\[ f_0\sum_{k\geq1} \J^1_{-k}\J^1_{k} + \text{ terms of degree
    greater or equal to } 1.\]
Note that $\sum_{k\geq1} \J^1_{-k}\J^1_{k} \tilde{p}_\mu = \hbar^2|\mu|\tilde{p}_\mu$ for
any partition $\mu$. In particular let $A_k \neq 0$ denote the smallest
degree term of $[t^k]\tau'$. Then the action of the RHS on
$[t^k]\tau'$ is equal to
\[ k f_0\hbar^2\cdot A_k + \text{higher degree terms} \neq 0,\]
which contradicts that this power series is $O(t^{k+1})$. This
finishes the proof of the uniqueness.

Finally, \eqref{eq:GeneralCutJoin} implies (by taking $H(z) = G(z),
F(z)=1$) that $|\mu|\HurGbbg(\mu_1,\dots,\mu_n) \in
\Q[\bb][\g]$. But \cref{theo:CD22} says that $\sqrt{\alpha}^{2g}\cdot|\mu|\HurGbbg(\mu_1,\dots,\mu_n) \in
\Z_{\geq 0}[-\bb\cdot \sqrt{\alpha}][\g] = \Z_{\geq 0}[\alpha-1][\g]$,
therefore $|\mu|\HurGbbg(\mu_1,\dots,\mu_n) \in
\Q[\bb][\g]$ is a polynomial of the same parity as $2g$. The
fact that $\Q[\bb]$ could be replaced by $\Z_{\geq 0}[-\bb]$ follows easily by comparing
the coefficients of $|\mu|\HurGbbg(\mu_1,\dots,\mu_n)$ with the
coefficients of $\alpha^g |\mu|\HurGbbg(\mu_1,\dots,\mu_n)$.
\end{proof}

\begin{remark}
In the case of a rational weight of the form $\check G(z) :=
\frac{\prod_{i=1}^n\left(1+P_iz\right)}{\prod_{i=1}^{m}\left(1-Q_i
    z\right)}$ we
can prove directly that $|\mu|H_{\check G}^{(\bb)}(\mu_1,\dots,\mu_n) \in
\Z_{\geq 0}[-\bb][\PP,\QQ]$ without referring to
\cite{ChapuyDolega2022}. Indeed, note that the latter statement is
equivalent to showing that $\hbar t\partial_t \log(\tau^{(\bb)}_{\check G}) \in
\mathbb{Z}_{\geq 0}[-\bb][\PP,\QQ,\tilde{\pp},\hbar]\llbracket t \rrbracket$. Note however
that $\hbar t\partial_t \log(\tau^{(\bb)}_{\check G}) = \sum_{k=1}^\infty
\tilde{p}_k \J^1_k \log(\tau^{(\bb)}_{\check G})$, and we can prove a stronger
statement that $\J^1_k \log(\tau^{(\bb)}_{\check G}) \in \mathbb{Z}_{\geq 0}
[-\bb][\PP,\QQ,\tilde{\pp},\hbar]\llbracket t \rrbracket$. This
statement can be proven using \eqref{eq:splitcutandjoin}, which
refines
\eqref{eq:CutAndJoinGeneralRational}, and then making an appropriate
change of variables given by \eqref{eq:RationalRelation}.  Equation \eqref{eq:splitcutandjoin}
produces a system of partial differential equations involving
$\{\J^1_k \log(\tau^{(\bb)}_{\check G})\}_{k \geq 1}$ that allows one to compute
$\J^1_k \log(\tau^{(\bb)}_{\check G})$ inductively order by order w.r.t
$t$, and since it involves sums of terms that were inductively assumed to lie
in $\mathbb{Z}_{\geq 0}
[-\bb][\PP,\QQ,\tilde{\pp},\hbar]\llbracket t \rrbracket$, the statement
follows. It is not our intention to focus on this alternative proof,
so we leave the details to the interested reader.
\end{remark}

\begin{proof}[Proof of \cref{thm:rationalweight}]
  By taking $H(z) = \prod_{i=1}^n
            (P_i+z)$, $F(z) = \prod_{i=1}^m(Q_i-z)$ and applying
            \cref{theo:GeneralCutJoin} we have that $\tau^{(\bb)}_G$ satisfies
            the following equation
\[t\sum_{i=0}^n e_{n-i}(\PP)\sum_{\gamma \in \Gamma^{\geq 0}_{(0,-1) \to
                  (i+1,0)}}\widetilde{\wt}(\gamma\,|0)\tau^{(\bb)}_G =(-1)^m\sum_{i =0}^m e_{m-i}(-\QQ) \sum_{\gamma \in \Gamma^{\geq 0}_{(0,-1) \to
                  (i+2,-1)}}\widetilde{\wt}(\gamma\,|0)\tau^{(\bb)}_G,\]
  where $e_i(\xx)$ is the $i$-th elementary symmetric function. In order
  to prove \eqref{eq:CutAndJoinGeneralRational}, it
  is enough to notice that
  \[ \sum_{i=0}^n e_{n-i}(\PP)\sum_{\gamma' \in \Gamma^{\geq 0}_{(0,-1) \to
                  (i+1,0)}}\widetilde{\wt}(\gamma'\,|0) = \sum_{\gamma \in \Gamma^{\geq 0}_{(0,-1) \to
          (n+1,0)}}\widetilde{\wt}(\gamma\,|(0,\PP)),\]
    and similarly
      \[ \sum_{i =0}^m e_{m-i}(-\QQ) \sum_{\gamma' \in \Gamma^{\geq 0}_{(0,-1) \to
                  (i+2,-1)}}\widetilde{\wt}(\gamma'\,|0) = \sum_{\gamma \in \Gamma^{\geq 0}_{(0,-1) \to
          (n+2,-1)}}\widetilde{\wt}(\gamma\,|(0,-\QQ,0).\]
    We will explain only the first equality, as the second one follows
    by the same argument. This identity follows directly by interpreting the product of the weights $\widetilde{\wt}(s_i\,|(0,\PP))$
             associated with the horizontal steps as the sum over all
             subsets of the horizontal steps, and then removing the selected
             steps at places $j_1 < \dots < j_{n-i}$ (thus,
             obtaining a path $\gamma' \in
             \Gamma^{\geq 0}_{(0,-1)\to(i+1,0)}$) and associating the weight
             $P _{j_1-1}\cdots
             P_{j_{n-i}-1} \cdot
             \widetilde{\wt}(\gamma'\,|0)$. Finally, the fact that
             there is a unique solution $\tau$ in the ring $\Q(\QQ)[\PP, \tilde{\pp}, \bb](\!( \hbar)\!)\llbracket
t\rrbracket$ which is slightly bigger than the ring $\Q[\tilde{\pp}, f_0^{-1},f_1,f_2,\dots,\hh,\bb](\!( \hbar)\!)\llbracket
t\rrbracket$ given in~\cref{theo:GeneralCutJoin} follows the 
argument given there verbatim. Indeed, using the same notation as in the proof
of~\cref{theo:GeneralCutJoin} it is enough to multiply $[t^k]\tau'$
by a polynomial $c(\QQ) \in \Q[\QQ]$ so that $[t^k]c(\QQ)\cdot\tau'
\in \Q[\QQ,\PP, \tilde{\pp}, \bb](\!( \hbar)\!)$. Then $c(\QQ)
\tau'$ also satisfies~\eqref{eq:CutAndJoinGeneralRational}. Then the
argument from
the proof
of~\cref{theo:GeneralCutJoin} can be applied with the grading $\deg(P_i) = \deg(Q_i)=1$.
           \end{proof}

\section{Airy structures from \texorpdfstring{$\mathcal W$}{W}-algebra representations}

The formalism of Airy structures provides a set of conditions under which a sequence of differential operators has a unique solution of a certain form. Typically, the differential operators arise from representations of vertex algebras, and the solution is a partition function that encodes interesting geometric information. In this section, we construct an Airy structure using the principal $ \mathcal W$ algebra of type $A $ whose  partition function will be related to weighted $b$-Hurwitz numbers.

\subsection{Airy structures}
We introduce the notion of Airy structures as studied in \cite{KontsevichSoibelman2018, BorotBouchardChidambaramCreutzigNoshchenko2024,BouchardCreutzigJoshi2024}. We shall mostly adopt the notation and presentation of Airy structures in \cite{BouchardCreutzigJoshi2024}, and refer the reader there for the proofs. 

Before defining Airy structures in general, we need to set up some notation and terminology. Consider a countable index set $A$. We will denote the set of variables $\{x_a\}_{a\in A} $ by $x_A$, the differential operator $\frac{\partial}{\partial x_a}$ by $\partial_a$ and the set of differential operators $\{ \frac{\partial}{\partial x_a}\}_{a\in A} $ by $\partial_A$.  We introduce a formal parameter $\hbar $ and the notion of $\hbar$-degree given by $\deg_\hbar  \hbar=1$, $\deg_\hbar x_a = 0$ and $\deg_\hbar \partial_a = 0$ for all $a\in A$, in order to distinguish from degree of polynomials in $x_A$ where we view $x_a$ as degree 1.

Using this $\hbar$-grading, we can define the \textit{completed Rees Weyl algebra} $\widehat{\mathcal{D}}_A^{\hbar} $ (see \cite[Definition 2.12]{BouchardCreutzigJoshi2024}), whose elements $ S \in \widehat{\mathcal{D}}_A^{\hbar}$ are differential operators of the form 	
\[
S = \sum_{n \in \mathbb Z_{\geq 0}} \hbar^n \sum_{\substack{m,k \in \mathbb Z_{\geq 0} \\ m+k = n} }\sum_{a_1,\ldots, a_m \in A} s^{(n,k)}_{a_1,\ldots, a_m}(x_A) \partial_{a_1}\ldots \partial_{a_m},
\] where the $ s^{(n,k)}_{a_1,\ldots, a_m}(x_A)  $ are polynomials of degree $ \leq k $.

\noindent We are ready to define an Airy structure in $\widehat{\mathcal{D}}_A^{\hbar}$ following \cite{BouchardCreutzigJoshi2024} now.
\begin{definition}\label{d:airy}
	Let $\mathcal{I} \subset \widehat{\mathcal{D}}_A^{\hbar}$ be a left ideal. We say that the ideal $ \mathcal I $  is an \emph{Airy structure} (or \emph{Airy ideal}) if there exist operators $\mathsf H_a \in \widehat{\mathcal{D}}_A^{\hbar}$, for all $a \in A$, such that:
	\begin{enumerate}[label = \alph*)]
		\item The collection of operators $\{ \mathsf H_a \}_{a \in A}$ is bounded (see \cite[Definition 2.15]{BouchardCreutzigJoshi2024} for a definition of boundedness\footnote{Boundedness allows us  to take infinite linear combinations of the operators $ H_a $ without divergent sums appearing, see \cite[Section 2.2]{BouchardCreutzigJoshi2024}.}).
		\item The left ideal $\mathcal{I}$ can be written as
		\begin{equation*}
			\mathcal{I} = \left \{ \sum_{a \in A} c_a  \mathsf H_a\ | \ c_a \in \widehat{\mathcal{D}}_A^{\hbar} \right \},
		\end{equation*}
		which consists of finite and infinite (if $A$ is countably infinite) $\widehat{\mathcal{D}}_A^{\hbar}$-linear combinations of the $\mathsf H_a$. 
		\item The operators $\mathsf H_a$ take the form
		\begin{equation*}\label{eq:form}
			\mathsf H_a = \hbar \partial_a  + O(\hbar^2).
		\end{equation*}
		\item The left ideal $\mathcal{I}$ satisfies the property:
		\begin{equation*}
			[ \mathcal{I}, \mathcal{I}] \subseteq \hbar^2 \mathcal{I}.
		\end{equation*}
	\end{enumerate}
\end{definition}

We need a slight generalization of the above notion of  Airy structures as we would like to incorporate so-called Whittaker shifts into our Airy structures (see \cref{sec:shiftedAirystructures}). More precisely, consider a set of formal commuting variables $\{\Lambda_a\}_{a\in A}$ with $\deg_\hbar\Lambda_a=0$. Then, we define a shifted Airy structure as follows.

\begin{definition}
	Let the collection of operators $\{\mathsf H_a\}_{a\in A}$ generate an Airy structure. Consider the ideal $\mathcal I^\Lambda \subset  \widehat{\mathcal{D}}_A^{\hbar}\llbracket\Lambda_A\rrbracket $ generated\footnote{In the context of Airy structures,  the word generated will always mean generated by infinite linear combinations.} by the operators $\mathsf H_a - \Lambda_a$ for every $a \in A$, viewed as elements of $\widehat{\mathcal{D}}_A^{\hbar}\llbracket\Lambda_A\rrbracket  $. Then the ideal $\mathcal I^\Lambda$ is said to be a \emph{shifted Airy structure} if it satisfies condition $(d)$ from \cref{d:airy}, i.e., 
	\[
		[ \mathcal{I}^\Lambda, \mathcal{I}^\Lambda] \subseteq \hbar^2 \mathcal{I}^\Lambda.
	\]
\end{definition}

The reason that (shifted) Airy structures are interesting is the following  theorem that guarantees a unique solution  $Z$ (of a certain form) to the equations $\mathsf H_a Z = \Lambda_a Z$ for all $a \in A $. 

\begin{theorem}\label{thm:pf}
	Let $\mathcal{I}^\Lambda \subset \widehat{\mathcal{D}}_{A}^{\hbar}\llbracket \Lambda_{A}\rrbracket $ be a shifted Airy structure. Then there exists a unique solution $ Z  $ of the form 
	\begin{equation*}\label{eq:pf}
		Z =\exp \left( \sum_{\substack{g \in \frac{1}{2} \mathbb Z_{\geq 0}, n \in \mathbb Z_{\geq 1}}} \hbar^{2g-2+n} F_{g,n}(x_A) \right)\,,
	\end{equation*} to the differential equations $\mathcal{I}^\Lambda \cdot Z = 0$ (\textit{i.e.}, $\mathsf H_a Z = \Lambda_a Z$, for all $a\in A$), given the initial condition $Z\big|_{x_A=0}=1$. The  $F_{g,n}(x_A)$ are formal power series in the $\Lambda_A$, whose coefficients are homogeneous polynomials of degree $n$ with $F_{g,n}(0) = 0$, i.e. $F_{g,n}(x_A) \in \mathbb C[x_A]\llbracket \Lambda_A\rrbracket  $. We call $Z$ the \emph{partition function} of the $\Lambda$-shifted Airy structure $\mathcal{I}^\Lambda$.
\end{theorem}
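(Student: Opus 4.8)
This is the foundational existence--uniqueness statement for (shifted) Airy structures, and we only sketch the standard proof, following \cite{KontsevichSoibelman2018,BorotBouchardChidambaramCreutzigNoshchenko2018,BorotBouchardChidambaramCreutzig2021,BouchardCreutzigJoshi2022}. Write $\mathsf G_a := \mathsf H_a - \Lambda_a$, so that $\mathsf H_a Z = \Lambda_a Z$ reads $\mathsf G_a Z = 0$ (legitimate as the $\Lambda_a$ are central scalars), and put $W := \log Z = \sum_{g,n}\hbar^{2g-2+n}F_{g,n}(x_A)$, which we seek with coefficients $F_{g,n}\in\mathbb C[x_A]\llbracket\Lambda_A\rrbracket$. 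Since condition $(c)$ of \cref{d:airy} means $P_a := \mathsf H_a - \hbar\partial_a\in\hbar^2\widehat{\mathcal D}_A^\hbar$, conjugating $\mathsf G_a$ by $e^{W}$ is the substitution $\partial_b\mapsto\partial_b+(\partial_b W)$, and evaluating the result on the constant function $1$ turns $\mathsf G_a Z=0$ into
\[
\hbar\,\partial_a W \;=\; \Lambda_a \;-\;\big(e^{-W}P_a\,e^{W}\big)\cdot 1 \;=:\; \Lambda_a+\mathcal R_a(W),
\]
where $\mathcal R_a(W)$ is an explicit polynomial in the (mixed) partial derivatives of $W$, with coefficients the polynomial coefficients $s^{(n,k)}$ of $P_a$, and every monomial of which carries a factor $\hbar^{n}$ with $n\ge 2$. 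Boundedness of $\{\mathsf H_a\}_{a\in A}$ guarantees that all the sums over $A$ implicit here converge in $\widehat{\mathcal D}_A^\hbar\llbracket\Lambda_A\rrbracket$.

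Next, bigrade the ambient ring by (power of $\hbar$, degree in $x_A$). The summand $\hbar^{2g-2+n}F_{g,n}$, with $F_{g,n}$ homogeneous of $x$-degree $n$, lives in bidegree $(2g-2+n,\,n)$; these bidegrees are pairwise distinct, and $\hbar\,\partial_a W$ has a unique term of bidegree $(2g-1+n,\,n-1)$, namely $\partial_a F_{g,n}$. Extracting that bidegree from the displayed identity above yields, for each $(g,n)$ and $a\in A$, an equation $\partial_a F_{g,n}=\rho^{(g,n)}_a$ with $\rho^{(g,n)}_a$ of $x$-degree $n-1$. A direct count, using the degree bound $\deg_x s^{(n,k)}\le k=n-m$ and $P_a\in\hbar^2\widehat{\mathcal D}_A^\hbar$, shows that $\rho^{(g,n)}_a$ involves only the $F_{g',n'}$ with $2g'-2+n'<2g-2+n$, possibly together with $F_{g,n}$ itself but the latter only inside terms carrying at least one extra factor $\partial_b F_{0,1}=\Lambda_b$ (here $F_{0,1}=\sum_b\Lambda_b x_b$). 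One therefore solves by a double induction --- first on $\ell:=2g-2+n$, then, for fixed $(g,n)$, on $\Lambda_A$-adic order: at $\ell=-1$ the constraint forces $\partial_a F_{0,1}=\Lambda_a+(\text{order}\ge2\text{ in }\Lambda)$, uniquely solved by $F_{0,1}=\sum_b\Lambda_b x_b+O(\Lambda^2)$ via formal inversion, and every later step is an equation $\partial_a(\text{unknown})=(\text{data already determined})$.

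The one real point --- and the main obstacle --- is the \emph{integrability} of each of these overdetermined systems: $\partial_a F_{g,n}=\rho^{(g,n)}_a$ for all $a$ is solvable iff the family $(\rho^{(g,n)}_a)_{a\in A}$ is a closed $1$-form, i.e. $\partial_b\rho^{(g,n)}_a=\partial_a\rho^{(g,n)}_b$, and this is exactly what the axiom $[\mathcal I^\Lambda,\mathcal I^\Lambda]\subseteq\hbar^2\mathcal I^\Lambda$ supplies. Writing $Z_{<}$ for the partial solution assembled from the data constructed so far, one has $\mathsf G_c Z_{<}=E_c\,Z_{<}$ with $E_c$ starting exactly at the current order and with leading part the obstruction $\partial_c F_{g,n}-\rho^{(g,n)}_c$ one is trying to kill. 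Applying $[\mathsf G_a,\mathsf G_b]=[\mathsf H_a,\mathsf H_b]\in\hbar^2\mathcal I^\Lambda$ to $Z_{<}$ makes the right side equal $Z_{<}$ times a series of strictly higher order, whereas the left side is $\mathsf G_a(E_b Z_{<})-\mathsf G_b(E_a Z_{<})$, with leading part $\hbar(\partial_a E_b-\partial_b E_a)Z_{<}$; comparing leading parts gives precisely $\partial_a\rho^{(g,n)}_b=\partial_b\rho^{(g,n)}_a$. By the Poincaré lemma over $\mathbb C[x_A]\llbracket\Lambda_A\rrbracket$, each $F_{g,n}$ then exists, is unique up to an additive constant, and the constant is fixed by $F_{g,n}(0)=0$ (equivalently $Z|_{x_A=0}=1$); the bidegree bookkeeping forces $F_{g,n}$ to be homogeneous of $x$-degree exactly $n$, and $F_{g,n}\in\mathbb C[x_A]\llbracket\Lambda_A\rrbracket$ since it is obtained from finitely many lower $F_{g',n'}$ by polynomial operations and a single integration. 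Uniqueness of $Z$ is then immediate, since any solution of the prescribed form has its $F_{g,n}$ governed by this same recursion and normalization.
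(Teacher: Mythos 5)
Your sketch is correct and follows essentially the route the paper itself points to: the paper gives no self-contained proof of \cref{thm:pf}, but states that the unshifted case is \cite{KontsevichSoibelman2018} and that the shifted version "can be proved easily following the arguments of \cite[Theorem 2.28]{BouchardCreutzigJoshi2022}", and your argument is exactly that standard induction (recursion on $2g-2+n$ refined by a $\Lambda$-adic induction to absorb the $F_{g,n}$-terms multiplied by $\partial_b F_{0,1}=O(\Lambda)$, with integrability of the overdetermined system $\partial_a F_{g,n}=\rho^{(g,n)}_a$ supplied by $[\mathcal I^\Lambda,\mathcal I^\Lambda]\subseteq\hbar^2\mathcal I^\Lambda$ and the Poincar\'e lemma). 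The only thing you do not mention is the paper's alternative shortcut, namely rescaling $\Lambda_a\mapsto\hbar^2\Lambda_a$ to reduce to an honest Airy structure and then undoing the rescaling, which avoids redoing the induction in the shifted setting.
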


For Airy structures, this theorem was first proved in \cite{KontsevichSoibelman2018}. In the case of   shifted Airy structures, the generalized version presented above can be proved easily  following the arguments of \cite[Theorem 2.28]{BouchardCreutzigJoshi2024}, but a thorough treatment of shifted Airy structures will  appear in \cite{BorotBouchardChidambaramCreutzigNawata202x}. An alternative to working with shifted Airy structures is the approach of \cite{BorotBouchardChidambaramCreutzig2024} -- first,  rescale all the parameters $\Lambda_a$ by $\hbar^2$ to get an honest Airy structure and its associated partition function, and then undo the rescaling to recover the partition function of the shifted Airy structure.

Airy structures were invented as an algebraic framework in which to understand the Eynard--Orantin topological recursion and its variants. In general, it's not an easy task to construct examples of Airy structures. Almost all interesting examples of Airy structures, in which the partition function has a geometric interpretation, come from representations of vertex algebras such as $\mathcal W$-algebras  \cite{BorotBouchardChidambaramCreutzigNoshchenko2024, BorotBouchardChidambaramCreutzig2024}. As we will see in this paper, the Airy structures of interest for $b$-Hurwitz theory also come from $\mathcal W$-algebras.

\subsection{The algebra $\mathcal W^{\mathsf k}(\mathfrak{gl}_r)$}\label{sec:Walg} In this section, we give a brief introduction to the principal $\mathcal W$-algebra of $\mathfrak{gl}_r$ at level $\mathsf k$, which we will denote by $\mathcal W^{\mathsf k}(\mathfrak{gl}_r)$. A standard reference for $\mathcal W$-algebras and their representation theory is \cite{Arakawa2017}. In this paper, we will only study the algebra $\mathcal W^{\mathsf k}(\mathfrak{gl}_r)$ via its free field embedding into a Heisenberg vertex operator algebra (VOA).  Throughout this section, we work over the ring $\mathbb Q[\hbar]$, where $\hbar$ is a formal parameter that keeps track of the conformal dimension of the $\mathcal W$-algebra. 

A Heisenberg VOA of rank $r$, say $\mathcal H_r$, is strongly and freely generated as a vertex algebra by $r$ fields   $\mathsf J^1(z), \mathsf J^2(z),\ldots, \mathsf J^r(z)$, whose mode decomposition is 
\begin{equation*}
	\mathsf J^a(z) = \sum_{k\in \mathbb Z} \mathsf J^a_k z^{-k-1}, \qquad a = 1,\ldots, r\,,
\end{equation*}  and the modes satisfy the following commutation relations:
\begin{equation*}
	[\mathsf J^{a_1}_{k_1}, \mathsf J^{a_2}_{k_2}] = \hbar^2 k_1 \delta_{a_1,a_2}\delta_{k_1+k_2,0}.
\end{equation*}

At any level $\mathsf k \in \mathbb C$, the vertex algebra $\mathcal W^{\mathsf{k}}(\mathfrak{gl}_r)$ is also strongly and freely generated by $r$ fields $\mathsf W^1(z),\ldots, \mathsf W^r(z)$ of conformal dimensions $1,\ldots,r$ respectively. A convenient choice of these generators is provided by the quantum Miura transform, which embeds the algebra $\mathcal W^{\mathsf k}(\mathfrak{gl}_r)$ into the Heisenberg VOA $\mathcal H_r$. An explicit expression for the fields $\mathsf W^i(z)$ is given in \cite[Corollary 3.12]{ArakawaMolev2017}:
\begin{align}\label{eq:miura}
	\sum_{i=0}^r	\mathsf W^i(z) ( \hbar  \mathfrak{b})^{r-i} \partial_z^{r-i} =   \left( \hbar \mathfrak{b} \partial_z + \mathsf  J^1(z)\right)\cdots   \left(\hbar  \mathfrak{b} \partial_z + \mathsf J^r(z)\right),
\end{align} where we set $\mathsf W^0(z) = 1$ by convention. The parameter $\mathfrak{b}$ is related to the level $\mathsf k$ as $\mathfrak{b} = \mathsf k + r-1$. We use the following mode convention for the generating fields $\mathsf W^i(z) $, for $i = 1,\ldots, r$.
\begin{equation*}
	\mathsf W^i(z) = \sum_{k\in \mathbb Z} \mathsf W^i_k z^{-k-i}\,.
\end{equation*} For $ i = 1,2,3 $, \eqref{eq:miura}  yields the following formulae for the modes $\mathsf W^i_k$:
\begin{align*}
	\mathsf W^1_k &= \sum_{a_1=1}^r 	\mathsf  J^{a_1}_k ,\\
	\mathsf W^2_k &= \sum_{\substack{1\leq a_1 < a_2\leq r,\\
  k_1+k_2=k}} 	\mathsf  J^{a_1}_{k_1} 	\mathsf  J^{a_2}_{k_2}  - \bb\hbar(k+1) \sum_{a=1}^r  (a-1)\mathsf J^{a}_k  \\
	\mathsf W^3_k &= \sum_{\substack{1\leq a_1 < a_2 < a_3\leq r,\\
  k_1+k_2+k_3=k}}\mathsf J^{a_1}_{k_1} 	\mathsf J^{a_2}_{k_2} 	\mathsf J^{a_3}_{k_3}   - \bb\hbar \sum_{\substack{1\leq a_1 < a_2\leq r,\\
  k_1+k_2=k}} 	\big(k_1(a_1-1)+k_2(2a_1-a_2)+a_1+a_2-3\big)\mathsf J^{a_1}_{k_1}
  \mathsf  J^{a_2}_{k_2}+\\
  &+\bb^2\hbar^2(k+2)(k+1) \sum_{a=1}^r
    \frac{(a-1)(a-2)}{2}\mathsf  J^a_k.
\end{align*}
For higher values of $i $ and for generic $\bb$, explicit expressions for the modes $	\mathsf W^i_k$ that one obtains directly from \eqref{eq:miura} are rather complicated.

\subsubsection{A combinatorial description of the modes  $	\mathsf W^i_k$}
For our purposes, it's essential to find a tractable formula for these modes $	\mathsf W^i_k $, and this is the main result of this subsection. More precisely, we will provide a combinatorial formula for the modes $\mathsf W^i_k$ in terms of  weighted paths as introduced in \cref{sec:weightedpaths}.

In addition to the notion of paths considered already, we define colored paths, which are paths whose steps carry an associated integral weight.
\begin{definition}
	Fix an integer $r \geq 1$. A \emph{colored path} is a tuple $ (\gamma, f) $, where $\gamma$ is a path in $\Gamma_{(x,y) \to (x',y')}$, and  $f : [x'-x] \to [r] $ is a function that associates an integer  to each step of the path $\gamma$.
\end{definition}

For every colored path $(\gamma, f)$ with $\gamma$ in $\Gamma_{(x,y) \to (x',y')}$, we  associate a weight $\wt(\gamma,f)
:= \wt_1(\gamma,f)\cdots\wt_{x'-x}(\gamma,f)$ as
\[\wt_j(\gamma,f)
= \begin{cases} \J^{f(j)}_{\gamma_j} &\text{ if } \gamma_j
	\neq 0,\\
\J^{f(j)}_{0}-\mathfrak{b}\hbar\big(\sum_{l=j+1}^{x'-x}\gamma_l+x'-x-j\big)
	&\text{ if } \gamma_j= 0,\end{cases}\]
where we denote the increment of the $j$-th step of the path by
$-\gamma_j$, \textit{ i.e.,} if the $j$-th segment is from $(x_j,
y_j)$ to $(x_j+1,y'_j)$, then $\gamma_j = - (y'_j-y_j)  $. See
\cref{fig:Paths} for examples of paths and the associated weights,
including a combinatorial interpretation of the sum appearing in the
weight of a
horizontal step.

\begin{figure}[h]
	\centering
	\includegraphics[width=0.9\textwidth
	]{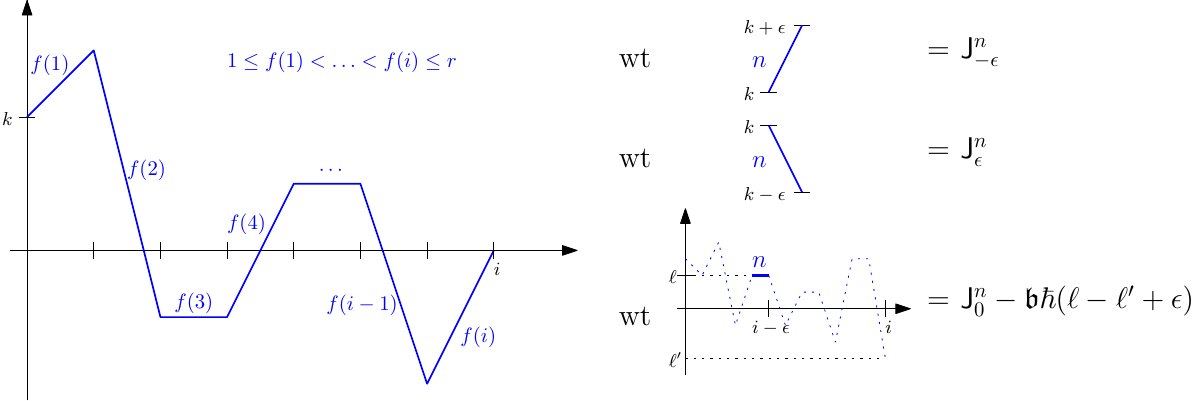}
	\caption{Generic colored path associated with the operators $\mathsf
          W^{i}_k$ by \eqref{eq:WkCombinatorial}. Here $k = 2, i=7$, and the associated weight is equal to $\J^{f(1)}_{-1}\J^{f(2)}_{4}(\J^{f(3)}_{0}-3\bb\hbar)\J^{f(4)}_{-2}(\J^{f(5)}_{0}-3\bb\hbar)\J^{f(6)}_{3}\J^{f(7)}_{-2}$.	}
	\label{fig:Paths}
\end{figure}

\noindent With this set up, we can prove the following combinatorial formula for the modes $ \mathsf W^{i}_k $.
\begin{lemma}\label{lem:WkCombinatorial}
	We have the following combinatorial interpretation of the modes $\mathsf W^{i}_k$ for all $1 \leq i \leq r$ and for all $k\in\mathbb Z$: 
	\begin{equation}
		\label{eq:WkCombinatorial}
		\mathsf W^{i}_k = \sum_{\substack{\gamma \in \Gamma_{(0,k) \to
				(i,0)},\\ f\in \FI([i],[r])}}\wt(\gamma,f),
                        \end{equation}
                        where we recall that $f\in \FI([i],[r])$
                      means that we are summing over strictly
                      increasing colorings.
\end{lemma}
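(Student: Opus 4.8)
The plan is to prove the combinatorial formula \eqref{eq:WkCombinatorial} by induction on $i$, unpacking the Miura transform \eqref{eq:miura} one factor at a time. The key observation is that \eqref{eq:miura} can be read as a recursion: writing $\mathcal{L}_i(z) := \bigl(\hbar\bb\partial_z + \mathsf{J}^1(z)\bigr)\cdots\bigl(\hbar\bb\partial_z + \mathsf{J}^i(z)\bigr)$, so that $\mathcal{L}_r(z) = \sum_{i=0}^r \mathsf{W}^i(z)(\hbar\bb)^{r-i}\partial_z^{r-i}$, one has $\mathcal{L}_i(z) = \mathcal{L}_{i-1}(z)\cdot\bigl(\hbar\bb\partial_z + \mathsf{J}^i(z)\bigr)$. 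Expanding $\mathcal{L}_i(z) = \sum_{j=0}^i \mathsf{W}^{(i),j}(z)(\hbar\bb)^{i-j}\partial_z^{i-j}$ for the ``truncated'' generators $\mathsf{W}^{(i),j}$ built from only $\mathsf{J}^1,\dots,\mathsf{J}^i$, I would extract a recursion relating the modes $\mathsf{W}^{(i),j}_k$ to $\mathsf{W}^{(i-1),j}_k$ and $\mathsf{W}^{(i-1),j-1}_{k'}$ with an insertion of a $\mathsf{J}^i$-mode. The claimed right-hand side of \eqref{eq:WkCombinatorial}, suitably generalized to $\FI([j],[i])$ for the truncated generators, should satisfy exactly this same recursion — decomposing a colored path in $\Gamma_{(0,k)\to(j,0)}$ with coloring in $\FI([j],[i])$ according to whether the color $i$ appears (and if so, at which step, with what increment) is precisely the combinatorial shadow of appending the factor $\bigl(\hbar\bb\partial_z + \mathsf{J}^i(z)\bigr)$.

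First I would carefully establish the base case $i=1$: here $\Gamma_{(0,k)\to(1,0)}$ consists of a single step with increment $-(0-k)=k$, i.e.\ $\gamma_1 = -(-k)$... let me be careful with the sign convention: the step goes from $(0,k)$ to $(1,0)$, so $y_1' - y_1 = -k$, hence $\gamma_1 = -(y_1'-y_1) = k$. If $k \neq 0$ the weight is $\mathsf{J}^{f(1)}_{\gamma_1} = \mathsf{J}^1_k$ (since $f \in \FI([1],[r])$ forces $f(1)=1$), and if $k=0$ the weight is $\mathsf{J}^1_0 - \bb\hbar\cdot 0 = \mathsf{J}^1_0$; either way this matches $\mathsf{W}^1_k = \sum_{a_1} \mathsf{J}^{a_1}_k$ only after one accounts for the fact that $\mathsf{W}^1$ in the lemma is the full $r$-fold generator, not the truncated one — so the induction must be set up with the truncated generators $\mathsf{W}^{(i)}$ and the final answer is the case $i = r$ wait, no: the lemma's $\mathsf{W}^i_k$ is genuinely the $i$-th generator of $\mathcal{W}^{\mathsf k}(\mathfrak{gl}_r)$, which by \eqref{eq:miura} is the coefficient of $(\hbar\bb)^{r-i}\partial_z^{r-i}$ in the full product. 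So the right framing is: prove by induction on the number $s$ of factors that the coefficient $\mathsf{W}^{(s),j}_k$ of $(\hbar\bb)^{s-j}\partial_z^{s-j}$ in $\mathcal{L}_s(z)$ equals $\sum_{\gamma\in\Gamma_{(0,k)\to(j,0)},\, f\in\FI([j],[s])}\wt(\gamma,f)$, and then specialize $s = r$, noting $\mathsf{W}^r... $ hmm, but the lemma ranges over $1\le i\le r$ with fixed $r$, and $\FI([i],[r])$ — so actually I want the coefficient of $(\hbar\bb)^{r-i}\partial_z^{r-i}$ in $\mathcal{L}_r$, which is $\mathsf{W}^{(r),i}_k$, and the inductive statement at $s=r$, $j=i$ gives exactly $\FI([i],[r])$. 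Good — so the induction variable is $s$ (number of Miura factors) and at the end we read off $s=r$.

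The main step is the recursion: from $\mathcal{L}_s(z) = \mathcal{L}_{s-1}(z)\cdot\bigl(\hbar\bb\partial_z + \mathsf{J}^s(z)\bigr)$, and using $\partial_z^{m} \circ \mathsf{J}^s(z) = \sum_{l\ge 0}\binom{m}{l}(\partial_z^l \mathsf{J}^s)(z)\,\partial_z^{m-l}$ together with $(\partial_z^l \mathsf{J}^s)(z) = \sum_k (-k-1)(-k-2)\cdots(-k-l)\,\mathsf{J}^s_k z^{-k-1-l}$, I would extract the coefficient of $(\hbar\bb)^{s-j}\partial_z^{s-j}$. This yields $\mathsf{W}^{(s),j}(z) = \mathsf{W}^{(s-1),j}(z)\,\partial_z + \bigl[\text{derivative/mode-shift terms}\bigr] + \mathsf{W}^{(s-1),j-1}(z)\,\mathsf{J}^s(z)$ — the first term corresponds to appending a step colored by something $< s$, the last to appending the final step colored $s$, and the middle terms (coming from the $\binom{m}{l}$ with $l\ge 1$) are exactly what produce the $-\bb\hbar(\sum_{l>j}\gamma_l + \cdots)$ correction on horizontal steps. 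Matching mode indices: a mode $\mathsf{W}^{(s),j}_k$ reached from $\mathsf{W}^{(s-1),j}_{k'}$ via $\partial_z$ corresponds combinatorially to the first $j-1$ vertices of the path being unchanged and... I need to chase how $z^{-k-j}$ is produced, which fixes how the appended step's increment relates to $k$ and $k'$. I expect the brunt of the work is this bookkeeping — reconciling (i) the conformal-weight shift in the mode index coming from $z^{-k-i}$ versus $z^{-k-1}$, (ii) the Leibniz-rule coefficients $(-k-1)\cdots(-k-l)$ matching the product $\prod(\gamma_l + \#\{\text{later steps}\})$ structure inside $\wt$ of a horizontal step, and (iii) the strictly-increasing constraint on colorings being preserved (a path with coloring in $\FI([j-1],[s-1])$ gets the color $s$ appended as its last-used color exactly once, or not at all). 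The main obstacle is therefore purely combinatorial/notational: proving that the $-\bb\hbar(\sum_{l=j+1}^{x'-x}\gamma_l + x'-x-j)$ weight on a horizontal step is precisely the image under the Miura recursion of the Leibniz binomial terms, i.e.\ that summing the contributions where the new $\mathsf{J}^s$-insertion ``lands on'' a horizontal step reproduces the stated additive correction. Once the recursion on both sides is verified to agree and the base case $s=1$ is checked, the induction closes and setting $s=r$ gives \eqref{eq:WkCombinatorial}.
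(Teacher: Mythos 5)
Your overall framing (induct on the number of Miura factors, prove the formula for truncated generators, read off the case of all $r$ factors) is sound, but you peel the Miura product \eqref{eq:miura} from the \emph{right}, whereas the paper peels it from the \emph{left}, and this difference is not cosmetic. The weight $\wt_j(\gamma,f)$ of a step depends only on the steps \emph{after} it ($\sum_{l>j}\gamma_l$ and the number of later steps), so deleting the \emph{first} step of a path leaves every remaining step's weight unchanged; moreover $\J^1$ commutes with the generators built from $\J^2,\dots,\J^r$, and the left factor contains a single $\partial_z$, so the left-peeling recursion is simply $\mathsf W^{i,r}_k=\mathsf W^{i,r-1}_k+\sum_{k_1+k_2=k}(\J^1_{k_1}-\delta_{k_1,0}\hbar\bb(k_2+i-1))\mathsf W^{i-1,r-1}_{k_2}$, whose combinatorial shadow (condition on the color of the first step) is immediate. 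Your right-peeling forfeits exactly this: deleting the \emph{last} step, say from $(j-1,k_s)$ to $(j,0)$, shifts the correction of \emph{every} earlier horizontal step by $-\bb\hbar(k_s+1)$, so the weight of a path does not factor as (weight of truncated path)$\times$(weight of last step).

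This is where your proposal has a genuine gap. On the operator side, pushing $\partial_z^{\,s-1-j}$ through $\J^s(z)$ produces, for each $l\ge 1$, a term $(\hbar\bb)^l\binom{s-j+l}{l}\,\mathsf W^{(s-1),j-1-l}(z)\,(\partial_z^l\J^s)(z)$, i.e.\ it couples $\mathsf W^{(s),j}_k$ to truncated generators of \emph{all} shorter lengths $j-1-l$, with path-independent binomial coefficients and falling-factorial mode coefficients $(k_s+1)\cdots(k_s+l)$. On the combinatorial side, expanding the per-step shifts $-\bb\hbar(k_s+1)$ over subsets of horizontal steps, then deleting those steps to land on paths counted by $\mathsf W^{(s-1),j-1-l}$, changes the remaining horizontal-step weights yet again (the count of later steps drops), and one must also track how the strictly increasing colorings redistribute over the deleted steps. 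Matching all of this against the binomials and falling factorials is a nontrivial compound identity of the same flavor as \cref{lem:xyidentity}, which the paper needs later for the $D_k$ operators but avoids entirely in the proof of \cref{lem:WkCombinatorial}. You explicitly defer precisely this matching as ``bookkeeping'' you ``expect'' to work; since it is the entire content of the inductive step in your route, the proof is not complete as written. Either supply that identity (in which case your argument becomes a valid, if considerably heavier, alternative), or switch to stripping the $\J^1$-factor, where the invariance of weights under removal of the first step makes the induction close in a few lines.
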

 \begin{proof}
 	In this proof, we will need to work with the algebra $\mathcal{W}^{\mathsf k}(\mathfrak{gl}_{r-1})$ realized as an embedding into the Heisenberg VOA generated by the fields $\mathsf J^2(z),\ldots, \mathsf J^r(z)$. Then the algebra $\mathcal{W}^{\mathsf k}(\mathfrak{gl}_{r-1})$ is generated by $(r-1)$ generators, say $\mathsf W^{i,r-1}(z)$, defined by the following Miura transform:
	\begin{equation*}
		\sum_{i=0}^{r-1}\mathsf W^{i,r-1}(z)(\hbar\mathfrak{b}\partial_z)^{r-1-i}:= (\hbar\mathfrak{b}\partial_z+\J^2(z))\cdots(\hbar\mathfrak{b}\partial_z+\J^r(z))\,.
	\end{equation*} In this proof, we denote the generators of $\mathcal{W}^{\mathsf k}(\mathfrak{gl}_{r})$ given by the Miura transform  \eqref{eq:miura} as $ \mathsf W^{i,r}(z)$, to emphasize the $r$-dependence and avoid confusion. Then, the quantum Miura transform for  $\mathcal{W}^{\mathsf k}(\mathfrak{gl}_{r})$ can be written as 
	\begin{equation*}
		\sum_{i=0}^{r}{\mathsf W}^{i,r}(z)(\hbar\mathfrak{b}\partial_z)^{r-i}= \left(\hbar\mathfrak{b}\partial_z+\J^1(z)\right)\cdot \sum_{i=0}^{r-1}\mathsf W^{i,r-1}(z)(\hbar\mathfrak{b}\partial_z)^{r-1-i},
	\end{equation*}
	which, in terms of the modes, gives
	\begin{equation}
		\label{eq:WOpRec}
		{\mathsf W}^{i,r}_k=\mathsf W^{i,r-1}_k+\sum_{k_1+k_2=k} \left(\J^1_{k_1}-\delta_{k_1,0}\hbar\mathfrak{b}(k_2+i-1)\right)\cdot\mathsf{W}^{i-1,r-1}_{k_2}.
	\end{equation}
	In addition to our existing convention that $\mathsf W^{0,r}_k = \delta_{k,0} $, we  set $\mathsf W^{r,r-1}_k=0$ and $\mathsf{W}^{-1,r-1}_k=0$ for any $k \in \mathbb Z$.

	With this setup, we can prove \eqref{eq:WkCombinatorial} by induction with respect to the lexicographic order on $(i,r)$. It is clear that the formula holds for $(1,r)$ for all $r$. Assume that it is true for all $(i',r') < (i,r)$. Then, the RHS of \eqref{eq:WkCombinatorial} can be decomposed into two terms -- either the first step is colored by $f(1)=1$, or $f(1)>1$. When $f(1)=1 $, the weight $\wt_1(\gamma,f) =
	\J^{1}_{k_1}-\delta_{k_1,0}\mathfrak{b}\hbar\big(k_2+i-1\big)$,
	where  the first step of $\gamma$ ends at $(1,k_2)$ and $k_1 = k - k_2$. This gives the following formula for the RHS of \eqref{eq:WkCombinatorial}:
	\begin{equation}
		\label{eq:CombRecW}
		\sum_{\substack{\gamma \in \Gamma_{(0,k) \to
				(i,0)},\\ f\in \FI([i],[2..r])}}\wt(\gamma,f) +\sum_{k_1+k_2=k}\left(\J^1_{k_1}-\delta_{k_2,0}\hbar\mathfrak{b}(k_2+i-1)\right)\cdot \sum_{\substack{\tilde{\gamma} \in \Gamma_{(0,k_2) \to
				(i-1,0)},\\ \tilde{f}\in \FI([i-1],[2..r])}}\wt(\tilde{\gamma},\tilde{f}),
	\end{equation}
	where $\tilde{\gamma}$ is the path obtained from $\gamma$ by removing the first step and shifting it one unit to the left, so that $\tilde{\gamma}$ starts at $(0,k_2)$. Note that the weight of a path is
	invariant under a horizontal shift.  For $j >1$, we define  $\tilde{f}(j-1) := f(j)$ such that $\wt_j(\gamma,f) = \wt_{j-1}(\tilde{\gamma},\tilde{f})$. The inductive hypothesis gives us the following formula for
	$j \leq i$: 
	\begin{equation*}
		\label{eq:WShiftedComb}
		\mathsf W^{j,r-1}_k = \sum_{\substack{\gamma \in \Gamma_{(0,k) \to
				(j,0)},\\ f\in \FI([j],[2..r])}}\wt(\gamma,f),
	\end{equation*}
	and hence \eqref{eq:CombRecW} is equivalent to the recursion \eqref{eq:WOpRec}, which finishes the proof.
\end{proof}

\subsubsection{Algebra of modes} In order to construct an Airy structure, we will need to work with the algebra of modes of the vertex algebra $\mathcal{W}^{\mathsf k}(\mathfrak{gl}_r)$. We consider the algebra $\mathcal A^\hbar$, which is a suitable completion of the algebra of modes, with the parameter $\hbar$ keeping track of the conformal dimension of the generating fields (see \cite[Section 3.1]{BorotBouchardChidambaramCreutzig2024} for a definition). 

The strategy to find an Airy structure is to pick a set of modes of $\mathcal{W}^{\mathsf k}(\mathfrak{gl}_r)$, such that the ideal generated by these modes will be a subalgebra of $\mathcal A^\hbar$.  In addition, we may want to pick a  subset of these modes, and shift them to obtain Whittaker vectors. Various such subsets of modes were studied in \cite{BorotBouchardChidambaramCreutzigNoshchenko2024}, and we present one of these subsets that is relevant for us. In fact, we prove an upgraded version that allows us to shift the zero modes $\mathsf W^i_0$ later on.

\begin{lemma}\label{lem:subalg}
	Consider the set of modes $S$ defined as 
	\[
		S := 	\left\{ \mathsf W^i_k : i \in [r]\,, k \geq 0 \right\},
		\]  and  the subset of modes $S' := 	\left\{ \mathsf W^i_k : i \in [r]\,, k \geq 1 \right\} \subset S$. Then, for any two modes  $\mathsf W^{i_1}_{k_1} , \mathsf W^{i_2}_{k_2} $ in $S$, we have 
		\[
			[\mathsf W^{i_1}_{k_1} , \mathsf W^{i_2}_{k_2}] \in \hbar^2 \mathcal A^\hbar \cdot S'.
		\]
\end{lemma}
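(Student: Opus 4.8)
The plan is to reduce the statement to a statement purely about the $\mathcal W$-algebra $\mathcal W^{\mathsf k}(\mathfrak{gl}_r)$, and then to exploit the combinatorial description of the modes from \cref{lem:WkCombinatorial} together with general facts about the mode algebra $\mathcal A^\hbar$. First I would recall that, since $\mathsf W^i(z)$ has conformal dimension $i$, the modes of $\mathcal W^{\mathsf k}(\mathfrak{gl}_r)$ are graded by energy: $\mathsf W^i_k$ has energy $-k$. The operator product expansion of two generating fields $\mathsf W^{i_1}(z)\mathsf W^{i_2}(w)$ has singular part a sum of normally-ordered composites of the $\mathsf W^j$'s (and their derivatives) with coefficients that are polynomials in $\hbar$ and $\bb$; because $\mathcal W^{\mathsf k}(\mathfrak{gl}_r)$ is strongly generated by $\mathsf W^1,\dots,\mathsf W^r$, the commutator $[\mathsf W^{i_1}_{k_1},\mathsf W^{i_2}_{k_2}]$ is, in $\mathcal A^\hbar$, an ($\hbar$-adically convergent) linear combination of normally-ordered products of modes $\mathsf W^{j_1}_{l_1}\cdots \mathsf W^{j_s}_{l_s}$ with $l_1+\cdots+l_s = k_1+k_2$. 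The key energy bookkeeping is: each such composite mode carries an explicit power of $\hbar$ coming from the $\hbar$-grading (conformal weight tracking), and the claim is that the total $\hbar$-power is at least $2$ and that at least one factor lies in $S'$, i.e. has strictly positive energy index.

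Second, I would make the $\hbar^2$ part precise. In this normalization the structure constants of the OPE of $\mathcal W$-algebras carry a factor of $\hbar^2$ in the leading singular terms (this is exactly the statement that makes these modes into an Airy structure in the first place, cf. the form $\mathsf H_a = \hbar\partial_a + O(\hbar^2)$ in \cref{d:airy}); concretely, the $\mathcal H_r$-level relation $[\mathsf J^{a}_{k_1},\mathsf J^{b}_{k_2}] = \hbar^2 k_1\delta_{a,b}\delta_{k_1+k_2,0}$ already has the $\hbar^2$, and every commutator of composite Miura expressions built from the $\mathsf J^a_k$ inherits at least one such contraction. So the $\hbar^2 \mathcal A^\hbar$ membership is essentially automatic once we expand $\mathsf W^{i_1}_{k_1}$ and $\mathsf W^{i_2}_{k_2}$ via \eqref{eq:WkCombinatorial} (equivalently via the Miura transform \eqref{eq:miura}) as sums of products of $\mathsf J^a_l$'s, commute them past each other using Wick's theorem, and observe every surviving contraction supplies a factor $\hbar^2$.

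Third — and this is the real content — I would show the resulting operator lies in the \emph{left ideal} $\mathcal A^\hbar\cdot S'$, i.e. that after normal ordering each term has a factor $\mathsf W^{j}_{l}$ with $l\geq 1$ sitting on the \emph{right}. Here I would argue by energy: the commutator $[\mathsf W^{i_1}_{k_1},\mathsf W^{i_2}_{k_2}]$ has total energy index $k_1+k_2 \geq 0$, and if both inputs lie in $S$ then in fact one of $k_1,k_2$ could be $0$, so I must be slightly careful — but even then $k_1+k_2 = k_1 + k_2$, and the commutator of two zero modes $[\mathsf W^{i_1}_0,\mathsf W^{i_2}_0]$ has energy $0$; the point is that a \emph{nontrivial} composite term of energy $0$ built from generators, once written in normal-ordered form with annihilation modes (positive energy index) to the right, either is a nonzero multiple of a single mode $\mathsf W^j_0$ — which I'd have to rule out or absorb — or genuinely contains a right factor in $S'$. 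The cleanest way is to use the combinatorial path description: expanding both modes by \cref{lem:WkCombinatorial} and applying Wick contractions between a $\mathsf J^a_{l}$ from the first and a $\mathsf J^b_{m}$ from the second with $l+m=0$, $l>0$, the two paths get "glued", and the remaining uncontracted $\mathsf J$-modes reassemble (after reordering, which only generates further $\hbar^2$-corrections handled inductively) into a product of $\mathsf W$-modes; tracking the energy flow along the glued path shows the rightmost reassembled $\mathsf W$-factor has strictly positive index. I expect the main obstacle to be precisely this last bookkeeping — showing no "rogue" term of the form (scalar)$\cdot\mathsf W^j_0$ or (scalar)$\cdot 1$ survives with a net $\hbar^2$, i.e. verifying that every term produced by the contractions genuinely retains a positive-energy mode on the right. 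This is where I would lean on \cite[Section 3.1]{BorotBouchardChidambaramCreutzig2021} and \cite{BorotBouchardChidambaramCreutzigNoshchenko2018}, where the analogous subalgebra statement for $S$ (without the zero modes) is established, and then check that the upgrade to allow $k=0$ in the inputs while keeping $S'$ on the right costs nothing: a commutator involving a zero mode still lowers nothing below energy $k_1+k_2 \geq 0$, and the only way to land entirely in energy $0$ with no $S'$ factor is a scalar, which cannot arise from a commutator of two operators both of which annihilate the vacuum only up to the shift — i.e. the potential scalar term is exactly the central/anomaly term of the OPE, which in this normalization is $O(\hbar^{2r})$ or vanishes, and in any case is proportional to $\delta_{k_1+k_2,0}$ with $k_1,k_2\geq 0$ forcing $k_1=k_2=0$, a case one checks directly for the few relevant $(i_1,i_2)$.
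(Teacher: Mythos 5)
Your argument for $k_1+k_2\geq 1$ is essentially sound and parallels the paper's: after normal ordering (the paper does this via Li's filtration at the level of $\mathsf W$-modes, citing \cite{Li2005}, rather than by re-expanding into Heisenberg modes and reassembling Wick contractions), every monomial $\mathsf W^{j_1}_{\ell_1}\cdots\mathsf W^{j_s}_{\ell_s}$ has $\ell_1+\cdots+\ell_s=k_1+k_2\geq 1$, so one can put the maximal index, which is necessarily $\geq 1$, on the right; and the $\hbar^2\mathcal A^\hbar\cdot S$ membership is exactly \cite[Proposition 3.14]{BorotBouchardChidambaramCreutzigNoshchenko2018}, which you correctly lean on.

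The genuine gap is the case $k_1=k_2=0$. Your dichotomy — a normally ordered energy-zero term either is a scalar multiple of a single $\mathsf W^j_0$ or contains a right factor in $S'$ — is false: a monomial consisting of \emph{several} zero modes, e.g.\ $\mathsf W^{j_1}_0\mathsf W^{j_2}_0$, has total energy zero, is not a single mode, and contains no factor of $S'$; nothing in your energy bookkeeping or in the central-term discussion excludes such terms, and "checking directly the few relevant $(i_1,i_2)$" is not a finite verification for arbitrary $r$ given how complicated the higher modes are. The paper closes exactly this case with a structural input you do not have: the Zhu algebra of $\mathcal W^{\mathsf k}(\mathfrak{gl}_r)$ is commutative \cite[Theorem 4.16.3]{Arakawa2017}. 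Commutativity of the Zhu algebra means $[\mathsf W^{i_1}_0,\mathsf W^{i_2}_0]$ is a linear combination of products $\bigl(\mathsf W^{j_1}_{\ell_1}\cdots\mathsf W^{j_n}_{\ell_n}\bigr)\bigl(\mathsf W^{j_{n+1}}_{\ell_{n+1}}\cdots\mathsf W^{j_{n+m}}_{\ell_{n+m}}\bigr)$ with $\sum_{i\leq n}\ell_i=-\sum_{i>n}\ell_i<0$, and then Li's filtration lets one reorder the second (positive-total-energy) block so that its rightmost mode has strictly positive index, placing the whole expression in $\hbar^2\mathcal A^\hbar\cdot S'$. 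Without an argument of this kind (or some other mechanism forcing all pure-zero-mode monomials to cancel), your proof does not establish the lemma in the crucial zero-mode case, which is precisely the case needed later to justify shifting the $\mathsf W^i_0$ in the shifted Airy structure.
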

\begin{proof}
	The weaker statement that  $[\mathsf W^{i_1}_{k_1} , \mathsf W^{i_2}_{k_2}] \in \hbar^2 \mathcal A^\hbar \cdot S $ is  a direct application of \cite[Proposition 3.14]{BorotBouchardChidambaramCreutzigNoshchenko2024} to the strong generators of $\mathcal{W}^{\mathsf k}(\mathfrak{gl}_r)$ (up to the rescaling by $\hbar$). 
	
	To prove the lemma, we  need to show that terms appearing in the commutator 	$[\mathsf W^{i_1}_{k_1} , \mathsf W^{i_2}_{k_2}]$ can be ordered such that the rightmost mode of each monomial is not $\mathsf W^i_0$. Using Li's filtration \cite{Li2005}, any monomial can be normally ordered to take the form (up to a constant prefactor)
	\[
		\mathsf W^{j_1}_{\ell_1}\cdots \mathsf W^{j_n}_{\ell_n},
	\] for some $n \geq 1$, where $\ell_n = \max(\ell_1,\dots, \ell_n)$ and $\ell_1+\cdots+\ell_n = k_1+k_2$.
	 If $k_1 + k_2 \geq 1$, we must have $\ell_n \geq 1$, and thus the zero modes do not appear as the rightmost mode in the monomial. 
	 
	 For the case $k_1=k_2 = 0$, we need to use the fact that the Zhu algebra of $\mathcal W^{\mathsf k}(\mathfrak{gl}_r)$ is commutative \cite[Theorem 4.16.3]{Arakawa2017}. Recall that the Zhu algebra in general is defined as a certain quotient of the algebra of zero modes \cite[Section 3.12]{Arakawa2017}. In the case of $\mathcal W^{\mathsf k}(\mathfrak{gl}_r)$, the Zhu algebra is generated by monomials of the form $ 	\mathsf W^{i_1}_{k_1}\cdots \mathsf W^{i_n}_{k_n} $ such that $\sum_{i=1}^n k_i = 0$, modulo relations $ \mathsf W^{j_1}_{\ell_1}\cdots \mathsf W^{j_n}_{\ell_n} \cdot	\mathsf W^{j_{n+1}}_{\ell_{n+1}}\cdots \mathsf W^{j_{n+m}}_{\ell_{n+m}} = 0$ whenever $\sum_{i=1}^{n} \ell_i  = - \sum_{j=n+1}^{n+m} \ell_j <0 $.
	 
	 Then, we know that the commutator $[\mathsf W^{i_1}_0, \mathsf W^{i_2}_0] $ is a linear combination of terms of the form $ 	\mathsf W^{j_1}_{\ell_1}\cdots \mathsf W^{j_n}_{\ell_n} \cdot	\mathsf W^{j_{n+m}}_{\ell_{n+1}}\cdots \mathsf W^{j_{n+m}}_{\ell_{n+m}}$, where $\sum_{i=1}^{n} \ell_i  = -\sum_{j=n+1}^{n+m} \ell_j  < 0 $. Again using Li's filtration, we can order $\mathsf W^{j_{n+1}}_{\ell_{n+1}}\cdots \mathsf W^{j_{n+1}}_{\ell_{n+m}}$ such that $\ell_{n+m} >0$, and hence we are done. 
	\end{proof}

\subsection{An Airy structure for weighted $b$-Hurwitz numbers}Now that we have introduced the notion of Airy structures and the algebra $\mathcal W^{\mathsf k}(\mathfrak{gl}_r)$, we can go ahead and construct the Airy structures we are after.  We will consider a specific representation of the vertex algebra $\mathcal W^{\mathsf k}(\mathfrak{gl}_r)$, look at a certain subset of modes and prove that they form an Airy structure. This Airy structure will turn out to have a trivial partition function i.e. $Z=1$, but by doing appropriate shifts, we will obtain a shifted Airy structure with an interesting partition function.

\begin{remark}\label{rem:weight}
	It may help the reader to keep in mind that the partition function of the shifted Airy structure that we will eventually construct will be related to the generating function of weighted $b$-Hurwitz numbers with the following weight:
	\[
	G(z) = \frac{\prod_{i=1}^r (P_i+ z ) }{\prod_{i=1}^{r-2}(Q_i-z )}\,
	\] where $r \geq 2$ is an integer. While this is not the most general weight we are interested in, one can take appropriate limits to reduce to an arbitrary rational weight (see \cref{sec:Dlimits} for a more detailed explanation).
\end{remark}

We will first construct a representation of the Heisenberg  VOA $\mathcal{H}_r$. Consider the ring $ R = \mathbb C(\mathbf P, \mathbf Q, \mathfrak{b})[\mathbf x] $, which is the polynomial ring in infinitely many variables $ \mathbf x =  \{x^a_k\}_{k\geq 1,  a \in[r]} $ over the field of rational functions in $\mathfrak{b}$, $\mathbf P = \{P_1,\dots, P_r\}$ and $\mathbf Q = \{Q_0,Q_1,\dots, Q_{r-2}\}$ --- we will set $Q_0=0$ in Section \ref{sec:bHurwitzfromZ}. 
Consider the  vector space 
\[
\widehat{\mathcal M} := \prod_{n\in \mathbb Z_{\geq 0}}\hbar^n \left(R_{\leq n}  \right),
\] where $R_{\leq n} $  denotes polynomials in the $ x^a_k$ of degree $\leq n$. A typical element of $\widehat{\mathcal M} $ is of the form $\sum_{n =0}^\infty \hbar^n P_n$, where $P_n$ is a polynomial in the $ x^a_k $ of degree at most $n$. This vector space $ \widehat{\mathcal M} $ is a module for the Heisenberg VOA via the following mode assignment:

\begin{equation}
	\label{eq:Jrep}
	\begin{split}
		\mathsf J^a_{k} = \left\{
		\begin{array}{lr}
			\hbar  \partial_{x^a_k}& k > 0,\\
			\hbar (-k)x^{a}_{-k} +(-1)^{r}\frac{1}{\Lambda} \delta_{k,-1} \delta_{a,r} & k < 0, \\
			 Q_{a-1}-\hbar \mathfrak{b}(a-1)  & a \in [r-1] ,k = 0\, \\
			 -\sum_{a'=1}^r P_{a'} -\sum_{a'=0}^{r-2} Q_{a'} -\hbar \mathfrak{b}(a-1) & a=r ,k = 0, \\
		\end{array}
		\right.
	\end{split}
\end{equation} Note that while the $\J^1_k$ have been defined previously in \eqref{eq:J1def}, it is not a conflict of notation as we will eventually substitute $\tilde{p}_k = k x^1_k$ (see \cref{thm:tauZidentification}). As  the quantum Miura transformation gives an embedding $\mathcal W^{\mathsf k}(\mathfrak{gl}_r) \hookrightarrow \mathcal H_r$, $\widehat{\mathcal M}$ is also a module for   $\mathcal W^{\mathsf k}(\mathfrak{gl}_r) $ by restriction. By abuse of notation, we will henceforth use both $\mathsf W^i_k$ and $ \J^a_k$ to denote the modes of the generators of $\mathcal W^{\mathsf k}(\mathfrak{gl}_r) $ and $\mathcal H_r$, respectively, in the representation $\widehat{\mathcal M}$.

We would like to claim that the set of operators $\left\{ \mathsf W^i_k : i \in [r], k \geq \delta_{i,1} \right\}  $ in the representation $ \widehat{\mathcal M}$ generates an Airy
structure. However, this set of operators does not satisfy
condition $(c)$ of \cref{d:airy} of Airy structures. Indeed, for an
operator $P$ in $\widehat{\mathcal{D}}_A^{\hbar}$, let $\pi_i(P) :=
\hbar^i\cdot[\hbar^i]P$ denote the $\hbar$-degree $i$ term of $P$. Then we have the following two problems: first $\pi_0(\mathsf W^i_0) \neq 0$, and second $ \pi_1(\mathsf W^i_k)
\neq \hbar \partial_{x^{\bullet}_{\bullet}}$. 

To address the first issue, let us define new operators $\widetilde{\mathsf W}^i_k$, where we remove some terms by hand, including all the $\hbar$-degree zero terms. Define $\mathsf V^i$ for any $i \in [r]$ as the following combinations of the Heisenberg zero modes:
\begin{equation*}\label{eq:defVi0}
	\mathsf V^i := \sum_{f \in \FI([i],[r])}\big(\J^{f(1)}_0-\hbar\bb(i-1) \big)\cdots \big (\J^{f(i-1)}_0-\hbar\bb \big) \J^{f(i)}_0.
      \end{equation*}
      Define, for any $i \in [r]$, and any $k \geq 0$,
\begin{equation}\label{eq:Wtilde}
	\widetilde{\mathsf W}^i_k := \mathsf W^i_k - \mathsf V^i \delta_{k,0}.
\end{equation}
	To address the second issue, we need to diagonalize the set of
        operators $\left\{ \widetilde{\mathsf W}^i_k : i \in [r], k \geq \delta_{i,1} \right\}  $. Assuming that  $ Q_0, Q_1,\dots, Q_{r-2}$
      are pairwise distinct, define  operators $\widetilde{\mathsf H}^a_{k}$, for any $ a \in [r]$ and any $k \geq 1$:
      \begin{align}
        \widetilde{\mathsf H}^r_k := \widetilde{\mathsf W}^1_{k} +  \sum_{a \neq r} \sum_{\ell=0}^{k-1}\frac{((-1)^{r}\Lambda)^{\ell+1}\left(-e_1(\PP) -e_1(\QQ)-Q_{a-1}\right)^{\ell}}{  \prod_{a'\neq a,r}\left(Q_{a'-1}-Q_{a-1}\right)} \sum_{i=1}^r (-Q_{a-1})^{r-i} \widetilde{\mathsf W}^i_{k-1-\ell}\,&.\label{eq:defHak}\\
\widetilde{\mathsf H}^a_{k} := - \sum_{\ell=0}^{k-1}  \frac{((-1)^{r}\Lambda)^{\ell+1}\left(-e_1(\PP) -e_1(\QQ)-Q_{a-1}\right)^{\ell}}{  \prod_{a'\neq a,r}\left(Q_{a'-1}-Q_{a-1}\right)} \sum_{i=1}^r (-Q_{a-1})^{r-i} \widetilde{\mathsf W}^i_{k-1-\ell}, \ \text{ for } a \in[r-1]&. \nonumber	
\end{align} 

These operators $\widetilde{\mathsf H}^a_{k}$ are diagonal in the following sense.

\begin{lemma}\label{lem:deg1}
For any $1 \leq a \leq r$ and $k\geq 1$, we have 
\[
	\widetilde{\mathsf H}^a_{k} = \hbar \partial_{x^a_k}+O(\hbar^2)\,.
\] 
\end{lemma}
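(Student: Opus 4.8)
The plan is to prove \cref{lem:deg1} by unwinding the definitions \eqref{eq:defHak} and extracting the $\hbar$-degree-$1$ part, and the first thing I would do is compute $\pi_1(\widetilde{\mathsf W}^i_k)$ for each $i\in[r]$ and $k\ge 0$. By the combinatorial formula \cref{lem:WkCombinatorial}, $\mathsf W^i_k$ is a sum over colored paths $(\gamma,f)$ with $\gamma\in\Gamma_{(0,k)\to(i,0)}$ and $f\in\FI([i],[r])$; the weight is a product of $i$ factors, each of which is either a Heisenberg mode $\J^{f(j)}_{\gamma_j}$ or a horizontal-step factor $\J^{f(j)}_0-\hbar\bb(\cdots)$. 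To get an $\hbar^1$ term, exactly one factor must contribute $\hbar^1$ (a positive mode $\hbar\partial_{x}$, or the term $(-1)^r\frac1\Lambda\delta_{\gamma_j,1}\delta_{f(j),r}$ times $\hbar$ coming from $\J^r_{-1}$, or the $-\hbar\bb(\cdots)$ piece of a horizontal step) while every other factor contributes its $\hbar^0$ part, which is one of the constants $Q_{a-1}$ (for $a\in[r-1]$), $-e_1(\PP)-e_1(\QQ)$ (for $a=r$), or $0$ (for a non-horizontal mode with $\gamma_j\neq 0,1$). Subtracting $\mathsf V^i\delta_{k,0}$ in \eqref{eq:Wtilde} is designed precisely to kill the $\hbar^0$ part entirely (that is the content of removing the first issue), so $\widetilde{\mathsf W}^i_k = \pi_{\ge 1}(\widetilde{\mathsf W}^i_k)$ and $\pi_1(\widetilde{\mathsf W}^i_k)$ is an explicit finite sum of terms $\hbar\partial_{x^a_k}$ and $\hbar$ times constants.

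Next I would organize the linear-algebra bookkeeping. After extracting $\pi_1$, the problem becomes: show that the specific linear combinations of $\{\widetilde{\mathsf W}^i_{k'}\}$ appearing in \eqref{eq:defHak} produce $\hbar\partial_{x^a_k}$ and nothing else at $\hbar$-degree $1$. The coefficients in \eqref{eq:defHak} are built from a Lagrange-interpolation-type kernel $\frac{1}{\prod_{a'\neq a,r}(Q_{a'-1}-Q_{a-1})}$ together with the polynomial $\sum_{i=1}^r(-Q_{a-1})^{r-i}\widetilde{\mathsf W}^i_{\bullet}$, which strongly suggests evaluating the generating polynomial $\sum_i \widetilde{\mathsf W}^i(z)(\cdots)$ — or rather its $\hbar^1$ part — at $z$ (or a shift variable) equal to $-Q_{a-1}$. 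Concretely, the $\hbar^1$ part of $\sum_{i=1}^r x^{r-i}\mathsf W^i_k$ should, by the path description, factor in a way governed by the Heisenberg zero modes $\J^a_0$: since $\J^a_0$ for $a\in[r-1]$ equals $Q_{a-1}$ at $\hbar^0$, setting $x=-Q_{a-1}$ isolates the "$a$-th channel". I expect that $\pi_1$ of the inner sum $\sum_i(-Q_{a-1})^{r-i}\widetilde{\mathsf W}^i_{k-1-\ell}$, before multiplying by the $\Lambda$-power prefactor, picks out $\hbar\partial_{x^a_{m}}$ for an appropriate $m$ together with residual constant terms, and that the $\sum_{\ell=0}^{k-1}((-1)^r\Lambda)^{\ell+1}(\cdots)^\ell$ geometric-type sum telescopes to upgrade $\partial_{x^a_{m}}$ with $m<k$ all the way to $\partial_{x^a_k}$, because the $\J^r_{-1}=\cdots+(-1)^r\frac1\Lambda$ mode shifts the second index by one each time it appears. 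The case $a=r$ is handled by \eqref{eq:defHak}'s extra leading term $\widetilde{\mathsf W}^1_k$, whose $\hbar^1$ part is literally $\hbar\partial_{x^r_k}$ plus lower-index corrections that the correction sum cancels.

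The main obstacle I anticipate is precisely controlling this telescoping/residual cancellation: I need to verify that all the constant ($\partial$-free) $\hbar^1$ terms cancel among the various $\ell$ and various $i$ contributions, and that the derivative terms $\hbar\partial_{x^a_m}$ for $m<k$ all combine to give exactly $\hbar\partial_{x^a_k}$ with coefficient $1$ and no leftover. This is where the Lagrange-interpolation denominators $\prod_{a'\neq a,r}(Q_{a'-1}-Q_{a-1})$ and the assumption that the $Q_i$ are pairwise distinct enter decisively: they are engineered so that the contribution of channel $a'$ to $\widetilde{\mathsf H}^a_k$ vanishes for $a'\neq a$, while the channel-$a$ contribution is normalized to $1$. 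My plan is to make a generating-function change of variables (introduce a formal spectral parameter, say $w$, track the coefficient of each $(-Q_{a-1})$-power, and resum the $\ell$-sum into a closed rational expression in $\Lambda$ and the $Q$'s) so that the cancellation becomes a single partial-fractions identity rather than an induction on $k$. If that resummation is cleaner stated differently, an alternative is a direct induction on $k$: assume $\widetilde{\mathsf H}^a_{k-1}=\hbar\partial_{x^a_{k-1}}+O(\hbar^2)$ (suitably interpreted), use the recursion \eqref{eq:WOpRec} relating modes at level $r$ to level $r-1$, and peel off one $\J^r_{-1}$-insertion per step. Either way, once $\pi_1(\widetilde{\mathsf H}^a_k)=\hbar\partial_{x^a_k}$ is established, the statement $\widetilde{\mathsf H}^a_k=\hbar\partial_{x^a_k}+O(\hbar^2)$ follows immediately since $\widetilde{\mathsf H}^a_k$ has no $\hbar^0$ part by construction.
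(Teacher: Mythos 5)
Your strategy is essentially the paper's: extract $\pi_0$ and $\pi_1$ of $\widetilde{\mathsf W}^i_k$ from the path formula of \cref{lem:WkCombinatorial} (with the subtraction of $\mathsf V^i$ killing the all-horizontal contribution), assemble the generating polynomial $\sum_{i}u^{r-i}\pi_1(\widetilde{\mathsf W}^i_k)$, isolate the channel $a$ by Lagrange interpolation, i.e.\ taking the residue at $u=-Q_{a-1}$ (this is exactly \eqref{eq:36} in the paper), then let the $\ell$-sum in \eqref{eq:defHak} telescope away the leftover lower-mode derivative, and finally treat $a=r$ through $\widetilde{\mathsf H}^r_k=\widetilde{\mathsf W}^1_k-\sum_{a'\neq r}\widetilde{\mathsf H}^{a'}_k$. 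So the route is the same; no alternative argument is being proposed.

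One bookkeeping slip needs fixing before the computation closes. The constant $(-1)^r\Lambda^{-1}$ in $\J^r_{-1}$ (see \eqref{eq:Jrep}) has $\hbar$-degree \emph{zero}, not one, and it occurs for $\gamma_j=-1$, $f(j)=r$ (an up-step by one colored $r$), not $\gamma_j=1$. You list it among the admissible $\hbar^1$ factors; taken literally, your rule ``exactly one $\hbar^1$ factor, all other factors equal to their $\hbar^0$ constants $Q_{a-1}$, $-e_1(\PP)-e_1(\QQ)$, or $0$'' would then discard the paths that contain both one step of negative increment \emph{and} a final up-step from $(i-1,-1)$ to $(i,0)$ colored $r$. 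But these paths are precisely the source of the terms $\frac{(-1)^r}{\Lambda}\hbar\partial_{x^a_{k+1}}\,\pi_0\big(e_{i-2}(\J^1_0,\dots,\hat{\J^a_0},\dots,\J^{r-1}_0)\big)$, which after interpolation give the leading derivative: since \eqref{eq:defHak} is built from the modes $\widetilde{\mathsf W}^i_{k-1-\ell}$, the $\ell=0$ term must already produce $\hbar\partial_{x^a_{k}}$, one unit above the mode index, and this shift comes exactly from the degree-zero $\Lambda^{-1}$ constant. Your later remark that $\J^r_{-1}$ ``shifts the index by one each time it appears'' shows you have the right mechanism in mind, so this is a misclassification to correct rather than a missing idea. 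Two further simplifications you can bank on: there are no $\partial$-free $\hbar^1$ terms to cancel (for $k>0$ a path whose unique degree-one factor is an $\hbar\bb$-piece or an $\hbar x$-piece cannot end at height $0$, and for $k=0$ the horizontal path's weight is removed by $\mathsf V^i$ in its entirety, including its $\hbar\bb$ part, not just at order $\hbar^0$); and in the $a=r$ case the corrections cancelled by $-\sum_{a'\neq r}\widetilde{\mathsf H}^{a'}_k$ are the derivatives $\hbar\partial_{x^{a'}_k}$ with the \emph{same} mode index $k$ and lower color $a'$, since $\pi_1(\widetilde{\mathsf W}^1_k)=\sum_{a}\hbar\partial_{x^a_k}$ for $k\geq 1$.
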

\begin{proof}
  First notice that $\pi_0(\J^a_k) = 0$ for all $k> 0$. \cref{lem:WkCombinatorial} implies that $\mathsf
  W^i_k$ can be written as a linear combination of operators
  associated with colored paths and the only paths in
  \eqref{eq:WkCombinatorial} that contribute in  $\hbar$-degree zero are paths
  that do not have any steps of negative increment. Since $k \geq 0$,
  the only such path appears when $k=0$ and it is the path that runs horizontally from $(0,0)$ to $(i,0)$. Therefore
  \eqref{eq:WkCombinatorial} implies that
  \[ \mathsf
  W^i_k = \delta_{k,0}\sum_{f \in \FI([i],[r])}\big(\J^{f(1)}_0-\hbar\bb(i-1) \big)\cdots \big (\J^{f(i-1)}_0-\hbar\bb \big) \J^{f(i)}_0+ O(\hbar) = \delta_{k,0}\mathsf V^i +
  O(\hbar), \]
which further implies that $\pi_0(\widetilde{\mathsf W}^i_k) = 0$. 
Similarly, the only paths in
  \eqref{eq:WkCombinatorial} that contribute in $\hbar$-degree one are paths
  that have precisely one step of negative increment. Moreover,
  $\pi_0(\J^a_k) = 0$ for all $k < 0$ except for $a=r$ and $k=-1$. Therefore the only paths in
  \eqref{eq:WkCombinatorial} that contribute in $\hbar$-degree one  are paths
  that have precisely one step of negative increment -- either they
  finish with a step from $(i-1,-1)$ to $(i,0)$ colored by $r$, or they have
  precisely one step of negative increment and all the other steps are flat. Thus, \eqref{eq:WkCombinatorial} implies that the
  corresponding $\hbar$-degree one part is given by 
\[\pi_1(\widetilde{\mathsf W}^i_k) =
                        \frac{(-1)^{r} }{\Lambda} \sum_{a=1}^{r-1}
                        \hbar\partial_{x^a_{k+1}}
                        \pi_0(e_{i-2}(\J^1_0,\dots,
                        \hat{\J^a_0},\dots,\J^{r-1}_0)) + \sum_{a=1}^r
                        \hbar\partial_{x^a_{k}}
                        \pi_0(e_{i-1}(\J^1_0,\dots,
                        \hat{\J^a_0},\dots,\J^r_0))\delta_{k>0},\]
                      for any $ i \in [r] $ and $k \geq 0$.
	In generating series form, we can express this formula as 
	\begin{equation*}
		\sum_{i=1}^r u^{r-i}	\pi_1(\widetilde{\mathsf W}^i_k) = \frac{(-1)^{r} }{\Lambda}\sum_{a=1}^{r-1}\hbar\partial_{x^a_{k+1}}\prod_{a'\neq a,r}(u + \pi_0(\J^{a'}_0)) + \sum_{a=1}^r\hbar\partial_{x^a_{k}}\prod_{a'\neq a} (u + \pi_0( \J^{a'}_0)) \delta_{k>0} \,.
	\end{equation*} For any $a \neq r$ we can invert it by dividing by $ \prod_{a'=1}^r (u + \pi_0( \J^{a'}_0))$ and
        taking the residue at $u = -\pi_0(\J^a_0)$, which for $k\geq 0 $ gives 
	\begin{equation} \label{eq:36}
		\frac{-(-1)^r\Lambda\pi_0(\J^r_0-\J^a_0)}{\prod_{a'\neq a} \pi_0(\J^{a'}_0-\J^a_0)} \sum_{i=1}^r (-\pi_0(\J^a_0))^{r-i} \pi_1(\widetilde{\mathsf W}^i_{k}) = \hbar\partial_{x^a_{k+1}} + (-1)^r \Lambda \pi_0(\J^r_0-\J^a_0) \hbar\partial_{x^a_{k}} \delta_{k > 0}.
              \end{equation}
              We complete diagonalization by removing the
              term $(-1)^r\Lambda \hbar\partial_{x^a_{k}}
              \pi_0(\J^r_0-\J^a_0)\delta_{k > 0}$ on the RHS of
              \eqref{eq:36}, which leads to the definition of
              $\widetilde{\mathsf H}^a_k$ for $a \neq r$ by comparing
              with \eqref{eq:Jrep}. When $ a =
              r$, and $k \geq 1$ we have $\J^r_k = \widetilde{\mathsf W}^1_k - \sum_{a=1}^{r-1}
              \J^{a'}_k$ for any $k \geq 1$, hence we define
              $\widetilde{\mathsf H}^r_k = \widetilde{\mathsf W}^1_k - \sum_{a'=1}^{r-1} \widetilde{\mathsf H}^{a'}_k$.
\end{proof}
Finally, we are ready to construct an Airy structure.
\begin{proposition}\label{prop:AS}
	Assume that $ Q_0, Q_1,\dots, Q_{r-2}$ are pairwise distinct. Then, the ideal $\mathcal I$ generated by the set of operators 
	\begin{equation*}\label{eq:ASoperators}
	\widetilde{	\mathsf  W}^i_k \qquad \text { for }  i \in [r] \,,\, k\geq \delta_{i,1}
	\end{equation*} is an Airy structure.
\end{proposition}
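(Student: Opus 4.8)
The plan is to exhibit explicit Airy generators for $\mathcal I$ and verify conditions (a)--(d) of \cref{d:airy}. The natural choice of generators is the family $\widetilde{\mathsf H}^a_k$ from \eqref{eq:defHak}, indexed by $A := [r]\times \mathbb Z_{\geq 1}$ with the variable $x^a_k$ attached to $(a,k)$; with this choice condition (c) is exactly \cref{lem:deg1}. So the actual work is to show that the left ideal generated by the $\widetilde{\mathsf H}^a_k$ coincides with $\mathcal I$ (condition (b)), together with boundedness (a) and the Lie-closure $[\mathcal I,\mathcal I]\subseteq\hbar^2\mathcal I$ (d).

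For condition (b) I would argue the two inclusions separately. That each $\widetilde{\mathsf H}^a_k$ lies in $\mathcal I$ is immediate from \eqref{eq:defHak}: it is a scalar linear combination of operators $\widetilde{\mathsf W}^i_{k'}$ with $0\le k'\le k$, all of which belong to the generating set of $\mathcal I$ once one notices that $\widetilde{\mathsf W}^1_0 = \mathsf W^1_0-\mathsf V^1 = \sum_a \mathsf J^a_0-\sum_a \mathsf J^a_0 = 0$ by \eqref{eq:Wtilde}, so the (a priori forbidden) term $i=1$, $k'=0$ simply vanishes. For the converse, I would invert the change of generators by a triangular-plus-Vandermonde argument. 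Writing $\mathsf T^a_m := \sum_{i=1}^r(-Q_{a-1})^{r-i}\widetilde{\mathsf W}^i_m$ for $a\in[r-1]$, the formula \eqref{eq:defHak} reads $\widetilde{\mathsf H}^a_k = -\sum_{\ell=0}^{k-1}c_{a,\ell}\,\mathsf T^a_{k-1-\ell}$ with $c_{a,0}$ an invertible scalar, so one recovers inductively each $\mathsf T^a_m$ ($m\ge 0$) as a linear combination of $\widetilde{\mathsf H}^a_1,\dots,\widetilde{\mathsf H}^a_{m+1}$. Next, as observed in the proof of \cref{lem:deg1}, $\widetilde{\mathsf H}^r_k = \widetilde{\mathsf W}^1_k-\sum_{a'=1}^{r-1}\widetilde{\mathsf H}^{a'}_k$, which recovers $\widetilde{\mathsf W}^1_k$ for $k\ge 1$. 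Finally, for each fixed $k\ge 0$ the $r-1$ identities $\mathsf T^a_k-(-Q_{a-1})^{r-1}\widetilde{\mathsf W}^1_k = \sum_{i=2}^r(-Q_{a-1})^{r-i}\widetilde{\mathsf W}^i_k$ (with $\widetilde{\mathsf W}^1_0=0$) form a Vandermonde system in the unknowns $\widetilde{\mathsf W}^2_k,\dots,\widetilde{\mathsf W}^r_k$ whose determinant $\prod_{a<a'}(Q_{a-1}-Q_{a'-1})$ is a unit by the pairwise-distinctness hypothesis on the $Q_{a-1}$; inverting it expresses every $\widetilde{\mathsf W}^i_k$ ($i\in[r]$, $k\ge\delta_{i,1}$) as a $\widehat{\mathcal D}^\hbar_A$-linear combination of the $\widetilde{\mathsf H}^a_k$, so the two ideals agree.

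For condition (a), boundedness of $\{\widetilde{\mathsf W}^i_k : i\in[r],\ k\ge\delta_{i,1}\}$ follows from the combinatorial description of \cref{lem:WkCombinatorial} in the representation $\widehat{\mathcal M}$ (equivalently, from the boundedness of $\mathcal W$-algebra modes in \cite{BorotBouchardChidambaramCreutzigNoshchenko2018}), and since each $\widetilde{\mathsf H}^a_k$ is a \emph{finite} linear combination of $\widetilde{\mathsf W}^i_{k'}$ with $k'\le k$, the collection $\{\widetilde{\mathsf H}^a_k\}_{(a,k)\in A}$ is bounded too. For condition (d), note that $\mathsf V^i$ acts on $\widehat{\mathcal M}$ as multiplication by a scalar of the ground ring (it is a polynomial in the zero modes $\mathsf J^a_0$, which are scalars by \eqref{eq:Jrep}), hence $[\widetilde{\mathsf W}^{i_1}_{k_1},\widetilde{\mathsf W}^{i_2}_{k_2}] = [\mathsf W^{i_1}_{k_1},\mathsf W^{i_2}_{k_2}]$; \cref{lem:subalg} then places this commutator in $\hbar^2\mathcal A^\hbar\cdot S'$, whose image under the representation lies in $\hbar^2\widehat{\mathcal D}^\hbar_A\cdot\{\widetilde{\mathsf W}^i_k : k\ge 1\}\subseteq\hbar^2\mathcal I$, giving $[\mathcal I,\mathcal I]\subseteq\hbar^2\mathcal I$. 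I expect the only genuinely delicate point to be the converse inclusion in (b) --- checking that the passage $\{\widetilde{\mathsf W}^i_k\}\rightsquigarrow\{\widetilde{\mathsf H}^a_k\}$ is invertible over the ground ring --- which is exactly where the hypothesis that $Q_0,\dots,Q_{r-2}$ are pairwise distinct is used, via the non-vanishing of the Vandermonde determinant; the triangularity in the summation index $\ell$ makes the remainder of the inversion automatic, and everything else is bookkeeping with already-established lemmas.
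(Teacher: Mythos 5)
Your proof follows essentially the same route as the paper's: it passes to the diagonalized generators $\widetilde{\mathsf H}^a_k$ of \eqref{eq:defHak}, obtains condition (c) from \cref{lem:deg1}, boundedness from the cited results, and condition (d) from \cref{lem:subalg} after noting that the shift by the scalar $\mathsf V^i$ does not affect commutators. The only difference is that you spell out the invertibility of the change of generators (triangularity in the summation index $\ell$ plus the Vandermonde determinant $\prod_{a<a'}(Q_{a-1}-Q_{a'-1})$, which is precisely where the pairwise-distinctness hypothesis enters), a step the paper merely asserts; this elaboration is correct.
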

\begin{proof}  We will work with the diagonalized set of operators $\widetilde{\mathsf H}^a_k$ as defined in \eqref{eq:defHak}. 		The set of  operators 
	\begin{equation}\label{eq:setH}
	\left\{\widetilde{\mathsf H}^a_k : a \in [r]\,,k\geq 1\right\}
	\end{equation} 
	 also generate the ideal $\mathcal I$ as the change of basis  \eqref{eq:defHak} is invertible.

	We need to  check the conditions $(a)-(d)$ to be an Airy structure from \cref{d:airy}.
	\begin{enumerate}[label=(\alph*)]
		\item The operators $ \widetilde{\mathsf W}^i_k $ are bounded by \cite[Lemma 2.43]{BouchardCreutzigJoshi2024}, which proves that any set of non-negative modes of a vertex algebra obtained  from a representation of a Heisenberg VOA is bounded. The operators $\widetilde{\mathsf H}^a_k $ are also bounded as they are finite linear combinations of the $ \widetilde{\mathsf W}^i_k $.
		\item As we mentioned at the start of the proof, the ideal $\mathcal I$ is generated by the set of operators \eqref{eq:setH}.
		\item $\widetilde{\mathsf H}^a_{k} = \hbar
                  \partial_{x^a_k}+O(\hbar^2)$ by \cref{lem:deg1}.
		\item \cref{lem:subalg} shows that the ideal generated by the set of operators $\left\{ \mathsf W^i_k   : i \in [r], k \geq 0	\right\} $ satisfies  condition $(d)$ to be an Airy structure. In fact, it also allows us to shift the subset of operators $\{\mathsf W^i_0: i\in [r]\}$ to get the subset $\{\widetilde{\mathsf W}^i_0: i\in [r]\}$, and the ideal $\mathcal I$ generated by the resulting operators will still satisfy condition $(d)$. (Note that $\widetilde{\mathsf W}^1_0 = 0$, and hence we may disregard it.) \qedhere
	\end{enumerate}
	\end{proof}
	
	While we have constructed an Airy structure, the partition function associated to it can easily seen to be trivial. As we know that there is a unique solution by \cref{thm:pf} all that needs to be done is to check that $Z = 1$ is a solution to the set of equations
	\[
		\mathsf W^i_k Z = \mathsf V^i \delta_{k,0} Z \qquad
                \text { for }  i \in [r] \,,\, k\geq \delta_{i,1}\,.
	\] As this is a straightforward check, and we do not need this result in this paper, we leave it as an exercise for the interested reader.
	
	\subsubsection{Shifted Airy structure}\label{sec:shiftedAirystructures}
	
	In order to get the partition function that we are interested in, we need to shift the zero modes $\mathsf{W}^i_0$. More precisely, we have the following theorem.
	\begin{theorem}\label{thm:shiftedAS}
		Assume that $ Q_0, Q_1,\dots, Q_{r-2}$ are pairwise distinct. Then, the ideal $\mathcal I^L$ generated by the set of operators 
		\begin{equation}\label{eq:WopAS}
		\mathsf  W^i_k - \mathsf V^i \delta_{k,0} - L_i \delta_{k,0} \qquad \text { for }  i \in [r] \,,\, k\geq \delta_{i,1}\,,
		\end{equation} where $\left(L_i\right)_{i\in[2..r]}$ are formal commuting variables of $\deg_\hbar L_i=0$, is a shifted Airy structure.
	\end{theorem}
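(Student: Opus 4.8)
\noindent\textit{Proof proposal.} The plan is to deduce this from \cref{prop:AS} together with the refined subalgebra property of \cref{lem:subalg}; the only genuinely new point is condition~$(d)$ for the shifted ideal. First I would recall that, by \cref{prop:AS}, the operators $\widetilde{\mathsf W}^i_k = \mathsf W^i_k - \mathsf V^i\delta_{k,0}$ with $i\in[r]$ and $k\geq\delta_{i,1}$ generate an Airy structure $\mathcal I$, with the diagonalized operators $\widetilde{\mathsf H}^a_k$ of~\eqref{eq:defHak} serving as canonical generators of the required form $\hbar\partial+O(\hbar^2)$. Writing $A$ for the index set of pairs $(i,k)$ and taking the shift variables to be $\Lambda_{(i,k)} := L_i\delta_{k,0}$ (so the shift is supported on the zero modes and is trivial when $i=1$), the ideal $\mathcal I^L$ of the statement is precisely the $\Lambda$-shifted ideal attached to this Airy structure: the change of basis~\eqref{eq:defHak} between the $\widetilde{\mathsf W}^i_k$ and the $\widetilde{\mathsf H}^a_k$ is invertible and does not involve the $L_i$, so the ideal generated by $\{\widetilde{\mathsf W}^i_k - L_i\delta_{k,0}\}$ agrees with the one generated by the $\widetilde{\mathsf H}^a_k$ shifted by the corresponding linear combinations of the $L_i$. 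Conditions $(a)$--$(c)$ of \cref{d:airy} are then inherited from \cref{prop:AS}, so it remains only to verify $[\mathcal I^L,\mathcal I^L]\subseteq\hbar^2\mathcal I^L$.

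For this I would check condition~$(d)$ on the generators $G_{(i,k)} := \widetilde{\mathsf W}^i_k - L_i\delta_{k,0}$, exactly as in the proof of \cref{prop:AS}$(d)$; this reduction to generators is standard for left ideals of the shape appearing in \cref{d:airy} (see~\cite{BouchardCreutzigJoshi2022}). The key simplification is that the $L_i$ are central formal variables, and that in the representation $\widehat{\mathcal M}$ each $\mathsf V^i$ acts by a scalar, being a polynomial in the Heisenberg zero modes $\J^a_0$, which themselves act by scalars on $\widehat{\mathcal M}$ by~\eqref{eq:Jrep}. Hence for all $k_1,k_2\geq 0$,
\[
  [G_{(i_1,k_1)},G_{(i_2,k_2)}] = [\widetilde{\mathsf W}^{i_1}_{k_1},\widetilde{\mathsf W}^{i_2}_{k_2}] = [\mathsf W^{i_1}_{k_1},\mathsf W^{i_2}_{k_2}].
\]
By \cref{lem:subalg} the right-hand side lies in $\hbar^2\,\mathcal A^\hbar\cdot S'$ with $S'=\{\mathsf W^i_k:i\in[r],\,k\geq 1\}$; since the modes in $S$ act on $\widehat{\mathcal M}$ as bounded operators of $\widehat{\mathcal D}_A^\hbar$ (condition $(a)$ in the proof of \cref{prop:AS}), in this representation it becomes a bounded $\hbar^2\widehat{\mathcal D}_A^\hbar$-linear combination of the operators $\mathsf W^i_k=\widetilde{\mathsf W}^i_k=G_{(i,k)}$ with $k\geq 1$. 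As each such $G_{(i,k)}$ with $k\geq1$ is one of the generators of $\mathcal I^L$ (the shift $L_i\delta_{k,0}$ vanishes there), this combination lies in $\hbar^2\mathcal I^L$. This gives $[\mathcal I^L,\mathcal I^L]\subseteq\hbar^2\mathcal I^L$, so $\mathcal I^L$ is a shifted Airy structure.

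The main obstacle is already contained in the input \cref{lem:subalg}: it is essential that the commutators $[\mathsf W^{i_1}_{k_1},\mathsf W^{i_2}_{k_2}]$ lie in the left ideal generated by the \emph{strictly} positive modes $S'$, and not merely by $S$, since otherwise the zero-mode shift $\widetilde{\mathsf W}^i_0\mapsto\widetilde{\mathsf W}^i_0-L_i$ would leave behind terms proportional to the $L_i$ that do not belong to $\hbar^2\mathcal I^L$. That refinement is exactly the content of the case $k_1=k_2=0$ of \cref{lem:subalg}, which rests on the commutativity of the Zhu algebra of $\mathcal W^{\mathsf k}(\mathfrak{gl}_r)$; granting it, the present theorem is a formal consequence, and I would expect the write-up to be short.
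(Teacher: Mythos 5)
Your proposal is correct and follows essentially the same route as the paper: it invokes \cref{prop:AS} for the unshifted Airy structure and then reduces condition~$(d)$ for $\mathcal I^L$ to \cref{lem:subalg}, whose strengthened conclusion (commutators lie in the left ideal generated by the strictly positive modes $S'$) is precisely what allows the zero modes to be shifted freely. Your write-up merely spells out the details (centrality of the $L_i$, scalar action of the $\mathsf V^i$) that the paper's one-line proof leaves implicit.
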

	\begin{proof}
		We know from \cref{prop:AS}, that the  ideal generated by the operators $	\mathsf  W^i_k - \mathsf V^i \delta_{k,0}$ is an Airy structure. So all we need to check to get a shifted Airy structure is that the condition $	[ \mathcal{I}^L, \mathcal{I}^L] \subseteq \hbar^2 \mathcal{I}^L $ is  satisfied.  \cref{lem:subalg} does the job, as it says that the modes $\mathsf W^i_0$ can be shifted freely without changing the commutation relations.
	\end{proof}
	Recall that by \cref{thm:pf} of Airy structures, the  shifted Airy structure $\mathcal I^L$ has a unique partition function, say $\mathcal Z^L$ of the following form:
	\[
		\mathcal Z^L =\exp \left( \sum_{\substack{g \in \frac{1}{2} \mathbb Z_{\geq 0}, n \in \mathbb Z_{> 0}}} \hbar^{2g-2+n} F_{g,n} \right)\,
	\] where the $F_{g,n}$, \emph{a priori}, are elements of $\mathbb C(\!( \mathbf P, \mathbf Q,  \Lambda)\!)(\mathfrak b)[\mathbf x]\llbracket L_2,\ldots, L_r\rrbracket $, and in this case the partition function $\mathcal Z^L$ will indeed be non-trivial.

	We  need to study the properties of the partition function $\mathcal Z^L $, especially the dependence of the $F_{g,n}$ on the  $L_i$, as we will want to specialize the $L_i$ to functions of $\mathbf P $ and $\mathfrak{b}$ later on (see \eqref{eq:Lambdasub}). Let us define the expansion coefficients of $F_{g,n}$ as follows:
	\begin{equation*}\label{eq:Fgnexpansion}
		F_{g,n} =  \frac{1}{n!} \sum_{\substack{a_1,\ldots, a_n\in [r] \\ k_1,\ldots, k_n \in \mathbb Z_{>0}}}  F_{g,n}\big[\begin{smallmatrix}a_1 & \cdots & a_n \\ k_1 & \cdots & k_n \end{smallmatrix}\big] \prod_{i=1}^n x^{a_i}_{k_i}.
	\end{equation*} such that  $F_{g,n}\big[\begin{smallmatrix}a_1 & \cdots & a_n \\ k_1 & \cdots & k_n \end{smallmatrix}\big]$ is an element of $\mathbb C(\!(\mathbf P,\mathbf Q,  \Lambda)\!)(\mathfrak b)\llbracket L_2,\ldots, L_r\rrbracket $. We can  show that the $F_{g,n}\big[\begin{smallmatrix}a_1 & \cdots & a_n \\ k_1 & \cdots & k_n \end{smallmatrix}\big]$  actually live in $\mathbb Q( \mathbf Q)[\mathbf P,\mathfrak{b},\Lambda][L_2,\ldots, L_r] $:

	\begin{proposition}\label{prop:Fgnbehaviour}
		The $F_{g,n}\big[\begin{smallmatrix}a_1 & \cdots & a_n \\ k_1 & \cdots & k_n \end{smallmatrix}\big]$ of the partition function  depend polynomially on $\Lambda, \mathfrak b,  \mathbf P$ and $L_2,\cdots, L_r$ and rationally on $\mathbf{Q}$. More precisely, we have 
			\[
			\frac{F_{g,n}\big[\begin{smallmatrix}a_1 & \cdots & a_n \\ k_1 & \cdots & k_n \end{smallmatrix}\big]}{\Lambda^{k_1 + \cdots + k_n}} \in \mathbb Q( \mathbf Q)[\mathbf P,\mathfrak{b}][L_2,\ldots, L_r] .
			\] 
	\end{proposition}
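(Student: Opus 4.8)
The plan is to run the Airy--structure recursion of \cref{thm:pf} for the shifted Airy structure $\mathcal I^L$ of \cref{thm:shiftedAS}, propagate through that recursion the dependence of the generators on $\Lambda,\mathfrak b,\PP,\QQ$ and $L_2,\dots,L_r$, and separately extract the exact power of $\Lambda$ by a scaling symmetry. It is cleanest to work with the diagonalized generators: by \cref{lem:deg1} the ideal $\mathcal I^L$ is generated by the operators $\widetilde{\mathsf H}^a_k-\mathcal L^a_k$ with $a\in[r]$ and $k\ge 1$, where $\widetilde{\mathsf H}^a_k=\hbar\partial_{x^a_k}+O(\hbar^2)$ and $\mathcal L^a_k$ is the shift that \eqref{eq:WopAS} induces on $\widetilde{\mathsf H}^a_k$ through \eqref{eq:defHak} --- a $\Q(\QQ)[\PP,\Lambda]$-linear combination of $L_2,\dots,L_r$. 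The recursion of \cref{thm:pf} then produces each $F_{g,n}\big[\begin{smallmatrix}a_1&\cdots&a_n\\k_1&\cdots&k_n\end{smallmatrix}\big]$ from coefficients with strictly smaller $2g-2+n$ using only sums, products, and the coefficients of $\widetilde{\mathsf H}^a_k$ together with the shifts $\mathcal L^a_k$; crucially, it introduces no division beyond what already appears in \eqref{eq:defHak}.

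First I would pin down the ring containing the coefficients of $\widetilde{\mathsf H}^a_k$ and $\mathcal L^a_k$. By \eqref{eq:Jrep} each Heisenberg mode $\J^a_k$ has coefficients that are at most linear in each of $\PP,\QQ,\mathfrak b$ and involve $\Lambda$ only through the single term $(-1)^r\Lambda^{-1}\delta_{k,-1}\delta_{a,r}$. By \cref{lem:WkCombinatorial} the mode $\mathsf W^i_k$ is a sum over colored paths, each contributing a monomial in the $\J^{\bullet}_{\bullet}$ with a $\Q[\mathfrak b]$-coefficient coming from its horizontal steps, so the coefficients of $\mathsf W^i_k$, and hence (by \eqref{eq:Wtilde}) of $\widetilde{\mathsf W}^i_k$, lie in $\Q[\PP,\QQ,\mathfrak b,\Lambda^{-1}]$ with bounded $\Lambda^{-1}$-degree. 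Passing to $\widetilde{\mathsf H}^a_k$ via \eqref{eq:defHak} multiplies by $\prod_{a'\ne a,r}(Q_{a'-1}-Q_{a-1})^{-1}$ and by nonnegative powers of $\Lambda$; thus the coefficients of $\widetilde{\mathsf H}^a_k$, and of the linear shift $\mathcal L^a_k$, lie in $\Q(\QQ)[\PP,\mathfrak b,\Lambda,\Lambda^{-1}][L_2,\dots,L_r]$, the $\QQ$-denominators being products of differences $Q_i-Q_j$. Since this ring is closed under the operations occurring in the recursion, induction on $2g-2+n$ yields $F_{g,n}\big[\begin{smallmatrix}a_1&\cdots&a_n\\k_1&\cdots&k_n\end{smallmatrix}\big]\in\Q(\QQ)[\PP,\mathfrak b,\Lambda,\Lambda^{-1}][L_2,\dots,L_r]$. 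That the dependence on the $L_i$ is polynomial rather than merely a formal power series is part of the shifted-Airy-structure formalism: rescaling the $L_i$ by $\hbar^2$ turns $\mathcal I^L$ into an honest Airy structure, whose partition-function coefficients are polynomial in all parameters, and the $\hbar$-degree bound needed to undo this rescaling is supplied by \cite{BorotBouchardChidambaramCreutzigNawata202x} (alternatively it can be tracked directly through the recursion).

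Next I would locate the exact power of $\Lambda$, which simultaneously removes $\Lambda$ from the base ring. Apply the substitution $x^a_k\mapsto\Lambda^{-k}x^a_k$, under which $\partial_{x^a_k}\mapsto\Lambda^{k}\partial_{x^a_k}$. Checking the four cases of \eqref{eq:Jrep} shows that every Heisenberg mode then transforms as $\J^a_k\mapsto\Lambda^{k}\big(\J^a_k\big|_{\Lambda=1}\big)$: for $k>0$ and for $k<0$ with $(a,k)\ne(r,-1)$ this is immediate, for $k=0$ both sides are $\Lambda$-independent, and for $(a,k)=(r,-1)$ one has $\hbar x^r_1+(-1)^r\Lambda^{-1}\mapsto\Lambda^{-1}\big(\hbar x^r_1+(-1)^r\big)$. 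Because a path in $\Gamma_{(0,k)\to(i,0)}$ has total $Y$-decrease equal to $k$, \cref{lem:WkCombinatorial} gives $\mathsf W^i_k\mapsto\Lambda^{k}\big(\mathsf W^i_k\big|_{\Lambda=1}\big)$, hence the same for $\widetilde{\mathsf W}^i_k$, and then --- the explicit $\Lambda$-powers in \eqref{eq:defHak} combining correctly --- also for $\widetilde{\mathsf H}^a_k$ and $\mathcal L^a_k$. So the substitution carries $\mathcal I^L$ to the ideal generated by $\Lambda^{k}\big(\widetilde{\mathsf H}^a_k-\mathcal L^a_k\big)\big|_{\Lambda=1}$, which, the $\Lambda^{k}$ being units, is precisely the shifted Airy structure with $\Lambda$ set to $1$; the initial condition $\mathcal Z^L\big|_{\mathbf x=0}=1$ is preserved, so uniqueness in \cref{thm:pf} forces $\mathcal Z^L(\mathbf x;\Lambda)=\mathcal Z^L(\{\Lambda^{k}x^a_k\};1)$ and therefore
\[
F_{g,n}\big[\begin{smallmatrix}a_1&\cdots&a_n\\k_1&\cdots&k_n\end{smallmatrix}\big]=\Lambda^{k_1+\cdots+k_n}\,F_{g,n}\big[\begin{smallmatrix}a_1&\cdots&a_n\\k_1&\cdots&k_n\end{smallmatrix}\big]\Big|_{\Lambda=1}.
\]
Combined with the previous paragraph applied at $\Lambda=1$, this gives $F_{g,n}[\cdots]/\Lambda^{k_1+\cdots+k_n}\in\Q(\QQ)[\PP,\mathfrak b][L_2,\dots,L_r]$, which is the claim.

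The main obstacle I expect is the bookkeeping in the scaling step: one must verify that the inhomogeneous term $(-1)^r\Lambda^{-1}$ in $\J^r_{-1}$ and the explicit powers of $\Lambda$ in \eqref{eq:defHak} conspire to make \emph{every} generator of $\mathcal I^L$ homogeneous of the expected $\Lambda$-weight under $x^a_k\mapsto\Lambda^{-k}x^a_k$, after which uniqueness of the Airy partition function does the rest. A secondary point, if one wants a fully self-contained argument, is to make explicit the $\hbar$-degree bound that upgrades the a priori power-series dependence on the $L_i$ to polynomial dependence.
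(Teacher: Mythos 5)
Your overall strategy is the same as the paper's: control the ring membership of the coefficients of the diagonalized generators and propagate it through the recursion that determines the $F_{g,n}$, then pin the exact power of $\Lambda$ by the rescaling $x^a_k\mapsto c^{-k}x^a_k$, $\Lambda\mapsto c\,\Lambda$ together with uniqueness of the partition function. Your scaling step is correct and is essentially identical to the paper's homogeneity argument. However, there is a genuine gap in your inductive step. You assert that the recursion of \cref{thm:pf} ``produces each $F_{g,n}\big[\begin{smallmatrix}a_1&\cdots&a_n\\k_1&\cdots&k_n\end{smallmatrix}\big]$ from coefficients with strictly smaller $2g-2+n$.'' For this \emph{shifted} Airy structure that is false: the constraints have nontrivial disc contributions, since $\tfrac{1}{\mathcal Z^L}\widetilde{\mathsf H}^a_k\mathcal Z^L=\widetilde U^a_k$ is a nonzero linear combination of $L_2,\dots,L_r$, so $F_{0,1}\neq0$. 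Consequently, in the expansion \eqref{eq:Hexpansion} the $h=0$, $d=2$ terms $\hbar^2\partial\partial$ produce, after applying Leibniz, products of the form $\partial F_{0,1}\cdot\partial F_{g,n}$ sitting at the \emph{same} $\hbar$-order $2g-1+n$: the very coefficient $F_{g,n}[\cdots]$ (with different column indices) reappears on the right-hand side. An induction on $2g-2+n$ alone is therefore not well-founded, and your ring-membership conclusion does not follow as stated. The paper's proof closes exactly this loop: it establishes the vanishing property of the coefficients $U^{a}_{k,h}\big[\begin{smallmatrix}A_1&A_2\\ I_1&I_2\end{smallmatrix}\big]$ (they vanish unless $\sum_{i''\in I_2}i''-\sum_{i'\in I_1}i'\leq k$), deduces in \eqref{eq:Frecursion} that all equal-Euler-characteristic terms carry a strictly smaller total index sum $k_1+\cdots+k_n$, and then inducts lexicographically on $\bigl(k_1+\cdots+k_n,\,2g-2+n\bigr)$. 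Without some substitute for this degree-lowering input, your induction cannot start from the correct well-ordering.

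A secondary weakness is the polynomiality in $L_2,\dots,L_r$: you defer it to the $\hbar^2$-rescaling trick plus an $\hbar$-degree bound ``supplied by'' a forthcoming reference or ``tracked directly through the recursion'' --- but the recursion is precisely where your bookkeeping is incomplete, and the rescaling trick alone only gives a formal power series in the $L_i$ unless one proves that finitely many orders contribute to each coefficient. In the paper this polynomiality comes for free, because each step of the explicit recursion \eqref{eq:Frecursion} is a \emph{finite} $\mathbb Q(\mathbf Q)[\mathbf P,\mathfrak b,\Lambda][L_2,\ldots,L_r]$-combination of previously computed coefficients and of the shifts $\widetilde U^a_k$, which are linear in the $L_i$. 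On the positive side, your observation that one may tolerate $\Lambda^{\pm1}$ in the coefficient ring at intermediate stages and let the exact scaling monomial $\Lambda^{k_1+\cdots+k_n}$ clean this up at the end is sound, and would mildly simplify the paper's normalization $\Lambda\widetilde{\mathsf W}{}^i_k$ --- but only once the lexicographic induction is in place.
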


	\begin{proof}
In this proof we will
denote by $(I|A)$ a tuple of two ordered sets $I  = \{i_1,\dots,i_{d}\}\subset
\Z^{d}_{> 0}$, and $A = \{a_1,\dots,a_{d}\}\subset
[r]^{d}$ of the same cardinality $|(I|A)| := d$. For
such a tuple we denote $\xx^A_I := \prod_{j=1}^{|A|}x_{i_j}^{a_j}$,
and $\partial_{\xx^A_I}:=
\prod_{j=1}^{|A|}\partial_{x_{i_j}^{a_j}}$. Equations \eqref{eq:WkCombinatorial}, \eqref{eq:Jrep}, and \eqref{eq:Wtilde}
imply that
\[
  \Lambda\widetilde{\mathsf{W}}_{k}^{i}=\sum_{d+h=1}^i\sum_{|(I_1|A_1)\sqcup
    (I_2|A_2)|=d} \hbar^hK_{k,h}^{i}\big[\begin{smallmatrix}{A}_1
    &{A}_2 \\ {I}_1&{I}_2 \end{smallmatrix}\big] \cdot \hbar^d \cdot\xx^{A_1}_{I_1}\cdot \partial_{\xx^{A_2}_{I_2}},\]
where $K_{k,h}^{i}\big[\begin{smallmatrix}{A}_1
    &{A}_2 \\ {I}_1&{I}_2 \end{smallmatrix}\big]
  \in\mathbb{Q}[\PP,\QQ,\bb, \Lambda]$ vanish unless $\sum_{i'' \in
    {I}_2}i''-\sum_{i'\in {I}_1}i' \leq k+1$.
Therefore \eqref{eq:defHak} and \cref{lem:deg1} further imply that
\begin{equation}
  \label{eq:Hexpansion}
  \hbar\partial_{x_{k}^{a}}=\widetilde{\mathsf H}_{k}^{a}+\sum_{d+h=2}^r\sum_{|(I_1|A_1)\sqcup
    (I_2|A_2)|=d} \hbar^hU_{k,h}^{a}\big[\begin{smallmatrix}{A}_1
    &{A}_2 \\ {I}_1&{I}_2 \end{smallmatrix}\big] \cdot \hbar^d
  \cdot\xx^{A_1}_{I_1}\cdot \partial_{\xx^{A_2}_{I_2}},
  \end{equation}
where $U_{k,h}^{a}\big[\begin{smallmatrix}{A}_1 &{A}_2 \\
  {I}_1&{I}_2 \end{smallmatrix}\big]
\in\mathbb{Q}(\mathbf{Q})[\mathbf{P},\mathfrak b,\Lambda]$ vanish
unless $\sum_{i''\in {I}_2}i''-\sum_{i'\in {I}_1}i'\leq
k$. Moreover \eqref{eq:defHak} together with the shifted Airy
structure~\eqref{eq:WopAS} gives that
\begin{equation}
  \label{eq:HAiry}
  \frac{1}{\mathcal{Z}^L}\widetilde{\mathsf H}_{k}^{a}\mathcal{Z}^L =\widetilde{U}_k^a \in  \Lambda^k\cdot \Q(\QQ)[\PP,L_2,\dots,L_r]
\end{equation}
for any $a \in [r], k \geq 1$, where the coefficients
$\widetilde{U}_k^a$ can be written
explicitly from \eqref{eq:defHak}.

Note that we can extract the  coefficients of $\mathcal Z^{L}$ as follows
\[ F_{g,n}\big[\begin{smallmatrix}a_1 & \cdots & a_n \\ k_1 & \cdots &
  k_n \end{smallmatrix}\big] =
[\hbar^{2g-1+n}]\left(\prod_{l=2}^n\partial_{x_{k_l}^{a_l}}\frac{1}{\mathcal{Z}^L}(\hbar\partial_{x_{k_1}^{a_1}})\mathcal{Z}^L\right)\Bigg|_{\mathbf{x}=0}.\]
Applying \eqref{eq:Hexpansion} with $k=k_1,
a=a_1$, and comparing it with \eqref{eq:HAiry}, we obtain the
following identity
\begin{align}
F_{g,n}\big[\begin{smallmatrix}a_1 & \cdots & a_n \\ k_1 &
  \cdots &
  k_n \end{smallmatrix}\big]=\delta_{n,1}\delta_{g,0}\widetilde{U}_{k_1}^{a_1}+&\sum_{d+h=2}^r\sum_{l=1}^d\sum_{\substack{|(I|A)\sqcup
                                                                                 (I_1|A_1)
                                                                                 \sqcup\cdots
                                                                                 \sqcup(I_l|A_l)|=d,\\|(I_1|A_1)|,\cdots,|(I_l|A_l)|\geq
  1}}\cdot \nonumber \\
 \sum_{\substack{(I'_1 |A'_1)\sqcup\cdots\sqcup (I'_l
  |A'_l)\sqcup ({I}
  |{A})=\\=(\{k_2,\dots,k_n\}|\{a_2,\dots,a_n\})}}&\sum_{\substack{g_1,\cdots,g_l\in
                                                    \frac{1}{2}\Z_{>
                                                    0},\\2\sum_{m}
  (g_m-1+|{I}_m|)=2g-h}}
  U_{k_1,h}^{a_1}\big[\begin{smallmatrix}{A} &{A}_1\cdots{A}_l
    \\ {I}&{I}_1 \cdots{I}_l\end{smallmatrix}\big] \prod_{m=1}^l F_{g_m,|I'_m|+|{I}_m|}\big[\begin{smallmatrix}A'_m  & {A}_m \\ I'_m & {I}_m \end{smallmatrix}\big].\label{eq:Frecursion}
\end{align}

While the formula above looks complicated (it follows by applying the Leibnitz rule repeatedly and we leave its verification to
the interested reader), we only need the following
property: the RHS is a finite combination over $\mathbb Q( \mathbf Q)[\mathbf
P,\mathfrak{b},\Lambda][L_2,\ldots, L_r]$ of products of terms of the form $F_{g',n'}\big[\begin{smallmatrix}a'_1 & \cdots & a'_{n'} \\ k'_1 &
  \cdots &
  k'_{n'} \end{smallmatrix}\big]$ with $2g'-2+n' \leq 2g-2+n$ and
$k'_1 +\cdots +k'_{n'} \leq k_1+\cdots+k_n$. Indeed, the condition
$\sum_{i\in{I}_m}i+\sum_{i'\in{I}'_m}i' \leq  k_1+\cdots+k_n$
follows from the vanishing condition for
$U_{k_1,h}^{a_1}\big[\begin{smallmatrix}{A} &A_1\cdots A_l \\ {I}&I_1
  \cdots I_l\end{smallmatrix}\big]$. Similarly,
\[ 2g_m-2+|I'_m|+|I_m|\leq 2\sum_m(g_m-1+|I_m|)+|I'_m|-|I_m| = 2g-h
  +|I'_m|-|I_m|\leq 2g-2+n, \]
and the equality holds if and only if $h=0$, $I'_m = \{k_2,\dots,k_n\}$ (thus, $I= \emptyset$), $|I_m| = 1$ and $g_m = g$. Since $d \geq 2$
these conditions imply that $l \geq 2$, therefore the vanishing
condition for $U_{k_1,h}^{a_1}\big[\begin{smallmatrix}{A} &A_1\cdots A_l \\ {I}&I_1
  \cdots I_l\end{smallmatrix}\big]$ implies that $I_m = \{y\}$ with
$y<k_1$. Thus, we conclude that for $F_{g',n'}\big[\begin{smallmatrix}a'_1 & \cdots & a'_{n'} \\ k'_1 &
  \cdots &
  k'_{n'} \end{smallmatrix}\big]$ with $2g'-2+n' = 2g-2+n$ we
necessarily have a strict inequality $k'_1 +\cdots +k'_{n'} <
k_1+\cdots+k_n$. Consequently, \eqref{eq:Frecursion} can be used to compute the $F_{g,n}\big[\begin{smallmatrix}a_1 & \cdots & a_n \\ k_1 &
  \cdots &
  k_n \end{smallmatrix}\big]$ recursively on $K=k_1+\cdots+k_n$ and
then on $\chi=2g-2+n$ in the lexicographic order and the claim $F_{g,n}\big[\begin{smallmatrix}a_1 & \cdots & a_n \\ k_1 &
  \cdots &
  k_n \end{smallmatrix}\big] \in \mathbb Q( \mathbf Q)[\mathbf
P,\mathfrak{b},\Lambda][L_2,\ldots, L_r]$ follows inductively.

To prove the specific form of the dependence on
$\Lambda$ let us simultaneously rescale $x^a_k\mapsto c^{-k} \,x^a_k$
and $\Lambda\mapsto c\,\Lambda$ for some $c\in\mathbb{C}^*$, which
corresponds to $\mathsf  J_k^a\mapsto c^k \mathsf  J_k^a$ and
$\widetilde{\mathsf W}^i_{k}\mapsto c^k\,\widetilde{\mathsf
  W}^i_{k}$. Then, as~\eqref{eq:WopAS}
 remains unchanged under this rescaling, so does the solution
 $\mathcal{Z}^L$. Thus, every $F_{g,n}\big[\begin{smallmatrix}a_1&a_2
   & \cdots & a_n \\ k_1&k_2 & \cdots & k_n \end{smallmatrix}\big]$ is
 a monomial in $\Lambda$ of degree $k_1+\cdots+ k_n$, which finishes
 the proof.
	\end{proof}

        As there are only a finite number of positive integers
        $k_1,\dots,k_n$ with the fixed sum $k_1+\cdots+k_n=k$, we
        have the following corollary of \cref{prop:Fgnbehaviour}.
	
	\begin{corollary}\label{cor:Fgnbehaviour}
		The expansion coefficients  $F_{g,n}$ of the partition function $\mathcal Z^L$ are elements of the ring $\mathbb Q(\mathbf Q) [\mathbf P, \mathfrak{b}, L_2,\ldots, L_r,\mathbf x]\llbracket \Lambda\rrbracket $.
	\end{corollary}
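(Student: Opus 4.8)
The plan is to obtain \cref{cor:Fgnbehaviour} directly from \cref{prop:Fgnbehaviour} by a purely organizational argument: resumming the series $F_{g,n}$ according to powers of $\Lambda$. First I would recall the expansion
\[
F_{g,n} = \frac{1}{n!}\sum_{\substack{a_1,\ldots,a_n\in[r]\\ k_1,\ldots,k_n\in\Z_{>0}}} F_{g,n}\big[\begin{smallmatrix}a_1 & \cdots & a_n \\ k_1 & \cdots & k_n \end{smallmatrix}\big]\prod_{i=1}^n x^{a_i}_{k_i},
\]
and apply \cref{prop:Fgnbehaviour}, which tells us that each coefficient $F_{g,n}\big[\begin{smallmatrix}a_1 & \cdots & a_n \\ k_1 & \cdots & k_n \end{smallmatrix}\big]$ equals $\Lambda^{k_1+\cdots+k_n}$ times an element of $\mathbb Q(\mathbf Q)[\mathbf P,\mathfrak b,L_2,\ldots,L_r]$.

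Next I would regroup this double sum by the total degree $k := k_1+\cdots+k_n$, writing $F_{g,n} = \sum_{k\geq 0}\Lambda^k\,G_{g,n}^{(k)}$ with
\[
G_{g,n}^{(k)} := \frac{1}{n!}\sum_{\substack{a_1,\ldots,a_n\in[r]\\ k_1,\ldots,k_n\in\Z_{>0}\\ k_1+\cdots+k_n=k}} \Lambda^{-k}\,F_{g,n}\big[\begin{smallmatrix}a_1 & \cdots & a_n \\ k_1 & \cdots & k_n \end{smallmatrix}\big]\prod_{i=1}^n x^{a_i}_{k_i}.
\]
The key point, already flagged before the statement, is finiteness: for fixed $n$ and $k$ there are only finitely many compositions $(k_1,\ldots,k_n)$ of $k$ into positive parts and at most $r^n$ color assignments $(a_1,\ldots,a_n)$, so the sum defining $G_{g,n}^{(k)}$ has finitely many terms. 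By \cref{prop:Fgnbehaviour} each $\Lambda^{-k}\,F_{g,n}\big[\begin{smallmatrix}a_1 & \cdots & a_n \\ k_1 & \cdots & k_n \end{smallmatrix}\big]$ lies in $\mathbb Q(\mathbf Q)[\mathbf P,\mathfrak b,L_2,\ldots,L_r]$, hence $G_{g,n}^{(k)} \in \mathbb Q(\mathbf Q)[\mathbf P,\mathfrak b,L_2,\ldots,L_r,\mathbf x]$; moreover $G_{g,n}^{(0)}=0$ since $n\geq 1$ forces $k\geq n\geq 1$, consistently with $F_{g,n}(0)=0$. Summing over $k$ then gives $F_{g,n}\in\mathbb Q(\mathbf Q)[\mathbf P,\mathfrak b,L_2,\ldots,L_r,\mathbf x]\llbracket\Lambda\rrbracket$, which is the claim.

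I do not expect any genuine obstacle here, since all the substantive content — in particular the control of the $\mathbf P$-, $\mathbf Q$-, $\mathfrak b$-, $L_i$- and $\Lambda$-dependence of the individual coefficients via the recursion \eqref{eq:Frecursion} — has already been carried out in \cref{prop:Fgnbehaviour}. The only thing worth verifying is that this regrouping is legitimate as an identity of formal power series in $\Lambda$, i.e.\ that each $\Lambda$-coefficient of $F_{g,n}$ is a well-defined (finite) element of $\mathbb Q(\mathbf Q)[\mathbf P,\mathfrak b,L_2,\ldots,L_r,\mathbf x]$ rather than an infinite sum over infinitely many $(k_1,\ldots,k_n)$; this is exactly what the finiteness of the set of compositions ensures.
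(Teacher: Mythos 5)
Your proposal is correct and is essentially the paper's own argument: the corollary follows from \cref{prop:Fgnbehaviour} together with the observation that for fixed $n$ there are only finitely many positive integers $k_1,\dots,k_n$ with a given sum $k_1+\cdots+k_n=k$ (and finitely many colour choices), so each $\Lambda$-coefficient of $F_{g,n}$ is a finite sum in $\mathbb Q(\mathbf Q)[\mathbf P,\mathfrak{b},L_2,\ldots,L_r,\mathbf x]$. No gaps.
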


	\section{\texorpdfstring{$b$}{b}-Hurwitz numbers from the Airy structure}\label{sec:bHurwitzfromZ}
	
	In this section, we continue the study of the partition function $\mathcal Z^L$ of the shifted Airy structure. After performing various substitutions, we will prove that  $\mathcal Z^L$ reduces to the generating function $\tau^{(\bb)}_{G}$ of weighted $b$-Hurwitz numbers for an arbitrary rational weight $G(z)$. As a consequence, we will prove a stronger set of constraints for $\tau^{(\bb)}_{G} $, from which the cut-and-join equation derived in \cref{thm:rationalweight} follows.

\subsection{Reduction of the Airy structure} 
Recall that the partition function $\mathcal Z^L$ depends on the parameters $\mathbf P, \mathbf Q, \Lambda,\bb,\xx$ and the $\left(L_i\right)_{i\in[2..r]}$. We will prove  that $\mathcal Z^L$ reduces to $\tau^{(\bb)}_{G} $ for the weight $G(z) = \frac{\prod_{i=1}^r (P_i+z) }{\prod_{i=1}^{r-2}(Q_i-z)} $ after the following substitutions:
\begin{description}
	\item[S1] set $Q_0 = 0$,
	\item[S2] set all the variables $x^a_k = 0$ unless $a =1$, i.e.,
	\begin{equation*}\label{eq:xsub}
		x^a_k = 0,\qquad a \in [2..r] ,\quad k \in \mathbb Z_{\geq 1},
	\end{equation*} 
	\item[S3] for any $i\in[2..r]$, we set the $L_i$ to be the following:
	\begin{equation}\label{eq:Lambdasub}
		L_i  =  \sum_{1 \leq n_1 < \dots <n_{i} \leq r}
		\prod_{m=1}^{i}\big(-P_{r+1-n_m}-\hbar \mathfrak{b}(r-n_m+m-1)\big) - \mathsf V^i.
	\end{equation} 
\end{description}
When $\bb= 0$, it is known \cite{BychkovDuninBarkowskiKazarianShadrin2024} that these $G$-weighted Hurwitz numbers can be obtained by topological recursion on a certain spectral curve of degree $r$ (see \cref{sec:spectralcurve} for the explicit form of the curve). This perspective of topological recursion provides the following motivation for the $\bb= 0$ part of the substitutions {\bf S1}--{\bf S3}.
\begin{itemize}
\item[{\bf S1:}]  The mode $\mathsf J_0^a$ is related to the residue of $\omega_{0,1}$ at the $a$-th preimage of $x^{-1}(\infty)=(0,Q_1,..,Q_{r-2},\infty)$ from which we expect that $Q_0=0$. As the only condition on the Airy structure is that the $Q_0,\ldots, Q_{r-2}$ are pairwise disjoint, we may impose $Q_0 = 0$, as long as we assume that $Q_i \in \mathbb C^*$ for $i \in [r-2]$.
\item[{\bf S2:}] For the spectral curve given in \cref{sec:spectralcurve}, the expansion of the correlators $\omega_{g,n}(z_1,..,z_n)$ at $z_i=0$ produces the corresponding weighted Hurwitz numbers. On the other hand,  \cite{BorotChidambaramUmer2025} (see \cref{thm:tr1}) proves that the expansion of $\omega_{g,n}(z_1,..,z_n)$ at $z_i=0$ is related to the coefficients $F_{g,n}\big[\begin{smallmatrix}1 & \cdots & 1 \\ k_1 & \cdots & k_n \end{smallmatrix}\big]$ of $\mathcal Z^L$. In other words, the part of $\mathcal Z^L$ that survives after setting $x_k^a=0$ for all $k\geq1,a\in[2..r]$  gives  Hurwitz numbers.
\item[{\bf S3:}]  The choice of $L_i$ when $\bb = 0$ is again inspired by \cite{BychkovDuninBarkowskiKazarianShadrin2024} and \cite{BorotChidambaramUmer2025}, in particular the form of the (global) loop equations. 
\end{itemize}

The above observations do not help  determine the $\bb$-dependence of $L_i$, because refinements of the results of \cite{BychkovDuninBarkowskiKazarianShadrin2024,BorotChidambaramUmer2025} for arbitrary $\bb$ are still under investigation. The $\mathfrak{b}$-corrections are dictated by the form of the cut-and-join equation from \cref{thm:rationalweight} (see \cref{sec:Dlimits} for more details).
	
	It is worth emphasizing that the substitution {\bf S3
        }\eqref{eq:Lambdasub} for the $L_i$ is allowed  as the
        $F_{g,n}$ is proved to be a polynomial in $L_i$ in
        \cref{cor:Fgnbehaviour}. Thus, after substitution, we will not
        have arbitrarily negative powers in the $\mathbf P$ and
        $\mathbf Q$, and the resulting $F_{g,n}$ will belong to $\mathbb Q(\mathbf Q)[\mathbf P,\mathfrak b,\mathbf x]\llbracket \Lambda\rrbracket $.  We will denote the function obtained after the substitutions {\bf S1} and {\bf S3} simply by $\mathcal Z$, without a superscript. Then, $\mathcal Z $ has the form 
	\[
	\mathcal Z = \exp \left( \sum_{\substack{g \in \frac{1}{2} \mathbb Z_{\geq 0}, n \in \mathbb Z_{> 0}}} \hbar^{2g-2+n}  F_{g,n} \right)\,
	\] where $F_{g,n}$ is an element of $\mathbb Q(\mathbf Q)[\mathbf P,\mathfrak b,\mathbf x]\llbracket \Lambda\rrbracket $, and satisfies the constraints 
	\begin{equation}\label{eq:Wconst}
		\mathsf  W^i_k  \mathcal Z = \Omega_i \delta_{k,0}\mathcal  Z \qquad \text { for }  i \in [r] \,,\, k\geq 0\,,
	\end{equation}	where we define the $\Omega_i$ for $ i \in [r]$ as 
	\begin{equation}\label{eq:Lambdadef}
		\Omega_i  :=  \sum_{1 \leq n_1 < \dots <n_{i} \leq r}
		\prod_{m=1}^{i}\big(-P_{r+1-n_m}-\mathfrak{b}\hbar(r-n_m+m-1)\big)\,.
	\end{equation} Note that $\mathsf W^1_0 = \sum_{a=1}^r \J_0^a=\Omega_1$, and hence we have included the (trivial) constraint for $\mathsf W^1_0$ in \eqref{eq:Wconst}. It will also be convenient in the following to define $\Omega_0 :=1$. We will refer to the set of equations \eqref{eq:Wconst} as \textit{$\mathcal W$-constraints}. We will denote the function obtained after all the substitutions \textbf{S1}--\textbf{S3} by $\mathcal Z^{\operatorname{red}}$, and refer to it as the \textit{reduced partition function}. 
	
	\begin{remark}\label{rem:Whittaker}
		It is important to note that the state $\mathcal Z$ satisfying \eqref{eq:Wconst} is not the highest weight vector for the representation of ${\mathcal W}^{\mathsf k}(\mathfrak{gl}_r)$ that we are considering in this paper. Indeed, the highest weight vector in this representation is the partition function $1$ of the Airy structure $\mathcal I$ of \cref{prop:AS}. This vector $\mathcal Z$ is obtained by a shift of the zero modes $\mathsf W^i_0$ and we will refer to it as a Whittaker vector (extending the terminology used in \cite{SchiffmannVasserot2013, BorotBouchardChidambaramCreutzig2024}). Alternatively, the term singular vector is also used in the literature to describe such vectors.
	\end{remark}

\subsection{Constraints for $ \mathcal Z^{\operatorname{red}}$}

The goal of this section is to find a set of constraints for the reduced partition function $  \mathcal Z^{\operatorname{red}}$ which will determine it uniquely. Naively, one might want to apply the  substitutions \textbf{S1}--\textbf{S3} directly to the operators $\mathsf W^i_k$ appearing in the $\mathcal W$-constraints \eqref{eq:Wconst}. However, the resulting operators will not annihilate the reduced partition function $  \mathcal Z^{\operatorname{red}}$. Indeed, the recursive process of solving the $\mathcal W$-constraints \eqref{eq:Wconst} to obtain $\mathcal Z$ mixes together the various $a$, and hence, cannot directly be reduced to a recursive formula purely in terms of the $ F_{g,n}\big[\begin{smallmatrix} 1 & \cdots & 1 \\ k_1 & \cdots & k_n \end{smallmatrix}\big]$. 

Instead, we find a sequence of different operators $ \widehat{D}^r_k$, defined as certain combinations of the $\mathsf W^i_k$, that annihilate $\mathcal Z$. These operators $ \widehat{D}^r_k$  are constructed such that the substitution  \textbf{S2} commutes with the action of $ \widehat{D}^r_k$ on $\mathcal Z$. Then the operators $D_k$, defined as the reduction of $\widehat{D}^r_k$ under substitution \textbf{S2}, annihilate $\mathcal Z^{\operatorname{red}}$, and are closely related to the cut-and-join equation of \cref{thm:rationalweight}.

\subsubsection{The $\widehat{D} $-operators}

As before, we fix an integer $r \geq 2$. Let us begin by defining certain differential operators $\widehat{D}^i_k$. For $i = 0$, we set $\widehat D^{0}_{k} := 0 $ for any $k \in \mathbb Z$. Then we recursively define, for any $ i \in [r] $ and $ k \in \mathbb Z$, 
\begin{equation}\label{eq:Dhatdef}
	\widehat D^i_{k} := (-1)^{i+1} (\mathsf W^i_k - \delta_{k,0} \Omega_i) + \sum_{\substack{ k_2\geq 0 \\ k_1+k_2 = k}}\left(\J^1_{k_1} - \delta_{k_1,0} \hbar \mathfrak{b}(k_2+i-1)\right)\widehat D^{i-1}_{k_2}\,.
\end{equation} From the  definition of $\Omega_1$, it is easy to see that $ \widehat{D}^1_{0} = 0$.

\begin{lemma}\label{lem:DhatZ=0}
	We have the following constraints for the function $\mathcal Z$:
	\[
		\widehat{D}^i_k \mathcal Z = 0 \qquad \text { for }  i \in [r] \,,\, k\geq 0\,.
	\] 
\end{lemma}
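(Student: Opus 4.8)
The plan is to prove the stronger statement $\widehat{D}^i_k\mathcal Z = 0$ for all $i\in[r]$, $k\geq 0$, by induction on $i$, using the $\mathcal W$-constraints \eqref{eq:Wconst} as the input. The base case $i=0$ is the definition $\widehat D^0_k:=0$; the case $i=1$ follows because $\widehat D^1_k = \mathsf W^1_k - \delta_{k,0}\Omega_1$ (the sum in \eqref{eq:Dhatdef} is empty since $\widehat D^0_{k_2}=0$), and this annihilates $\mathcal Z$ by \eqref{eq:Wconst} together with $\mathsf W^1_0 = \Omega_1$. For the inductive step, I would assume $\widehat D^{i-1}_{k_2}\mathcal Z = 0$ for all $k_2\geq 0$ and show $\widehat D^i_k\mathcal Z = 0$. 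The obvious attempt is to apply \eqref{eq:Dhatdef} directly to $\mathcal Z$: the second term $\sum_{k_1+k_2=k,\,k_2\geq 0}(\J^1_{k_1}-\delta_{k_1,0}\hbar\mathfrak b(k_2+i-1))\widehat D^{i-1}_{k_2}\mathcal Z$ vanishes by the inductive hypothesis (the operators $\J^1_{k_1}$ act on the left and the inner operator already kills $\mathcal Z$), and the first term $(-1)^{i+1}(\mathsf W^i_k - \delta_{k,0}\Omega_i)\mathcal Z$ vanishes by \eqref{eq:Wconst}. So naively each term vanishes separately.

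The subtlety — and the main obstacle — is that this is only legitimate if the infinite sum $\sum_{k_1+k_2=k}$ makes sense as an operator on $\widehat{\mathcal M}$ and one is allowed to distribute it over the already-established relations. For $k\geq 0$ fixed, the constraint $k_2\geq 0$ forces $k_1 = k-k_2 \leq k$, so $k_1$ ranges over $\{k,k-1,\dots\}$ down to $-\infty$; the terms with $k_1>0$ act as $\hbar k_1\partial_{\tilde p_{k_1}}$ and the terms with $k_1<0$ multiply by $\hbar\tilde p_{-k_1}$. One must check that when applied to the specific form of $\mathcal Z$ (an exponential of a series with the stated $\hbar$- and $\Lambda$-grading from \cref{cor:Fgnbehaviour}), only finitely many terms contribute at each fixed power of $\hbar$, $\Lambda$, and each monomial in $\xx$ — i.e., the sum converges in the topology of $\widehat{\mathcal D}^\hbar_A\llbracket\Lambda\rrbracket$ acting on $\widehat{\mathcal M}$. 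This is exactly the boundedness/convergence bookkeeping that underlies the whole Airy-structure formalism, and since each $\widehat D^{i-1}_{k_2}$ is a finite $\mathbb Q(\QQ)[\PP,\mathfrak b,\Lambda]$-linear combination of the $\mathsf W^{i'}_{k'}$ with $k'\geq 0$ (which follows by unwinding \eqref{eq:Dhatdef} inductively), the sum $\sum_{k_2}(\dots)\widehat D^{i-1}_{k_2}$ is a bounded family in the sense of \cite{BouchardCreutzigJoshi2022}.

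Concretely, I would organize the inductive step as follows. First, record that $\widehat D^{i-1}_{k_2}$ is an $\hbar$-adically convergent combination of $\{\mathsf W^{i'}_{k'}: i'\in[i-1], k'\in\Z\}$, all of which have nonnegative lower index in the relevant range, so that $\sum_{k_1+k_2=k,\,k_2\geq 0}(\J^1_{k_1}-\delta_{k_1,0}\hbar\mathfrak b(k_2+i-1))\widehat D^{i-1}_{k_2}$ defines an element of $\widehat{\mathcal D}^\hbar_A\llbracket\Lambda\rrbracket$; this is where I expect to lean on the completions set up in \cref{sec:Walg} and on the grading controls from \cref{prop:Fgnbehaviour}–\cref{cor:Fgnbehaviour}. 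Second, apply this operator to $\mathcal Z$ termwise: each summand is $(\J^1_{k_1}-\dots)\big(\widehat D^{i-1}_{k_2}\mathcal Z\big) = (\J^1_{k_1}-\dots)\cdot 0 = 0$ by the inductive hypothesis. Third, add the term $(-1)^{i+1}(\mathsf W^i_k-\delta_{k,0}\Omega_i)\mathcal Z = 0$, which is \eqref{eq:Wconst}. Summing gives $\widehat D^i_k\mathcal Z = 0$, completing the induction. The only genuinely nontrivial point is the convergence justification in the first step; everything else is formal manipulation with the already-proven $\mathcal W$-constraints.
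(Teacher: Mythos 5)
Your argument is correct and is essentially the paper's own (one-line) proof: the paper simply observes that the vanishing follows directly from the $\mathcal W$-constraints \eqref{eq:Wconst} together with the recursive definition \eqref{eq:Dhatdef}, which is exactly your induction on $i$ since the recursion only involves $\widehat D^{i-1}_{k_2}$ with $k_2\geq 0$. Your extra convergence/boundedness bookkeeping for the infinite sum over $k_1\leq k$ is harmless but not something the paper deems necessary, as these manipulations happen inside the completed mode algebra already used throughout.
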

\begin{proof}
The statement of the lemma follows directly from the the $\mathcal W$-constraints of equation \eqref{eq:Wconst}, and the definition \eqref{eq:Dhatdef} of the operators $\widehat D^i_{k}$. 
\end{proof}
In order to prove certain properties of the operators $\widehat{D}^i_k
$, we will find a combinatorial interpretation for them in terms of
weighted lattice paths. We will use the combinatorial formula for the
operators $\mathsf W^i_k$ in terms of weighted colored paths proved in
\cref{lem:WkCombinatorial}.  Recall the notion of bridges from
\cref{def:bridges}. Here is a combinatorial formula for the
$\widehat{D}$-operators, which is illustrated in \cref{fig:Paths2}.

\begin{figure}[h]
	\centering
	\includegraphics[width=0.9\textwidth
	]{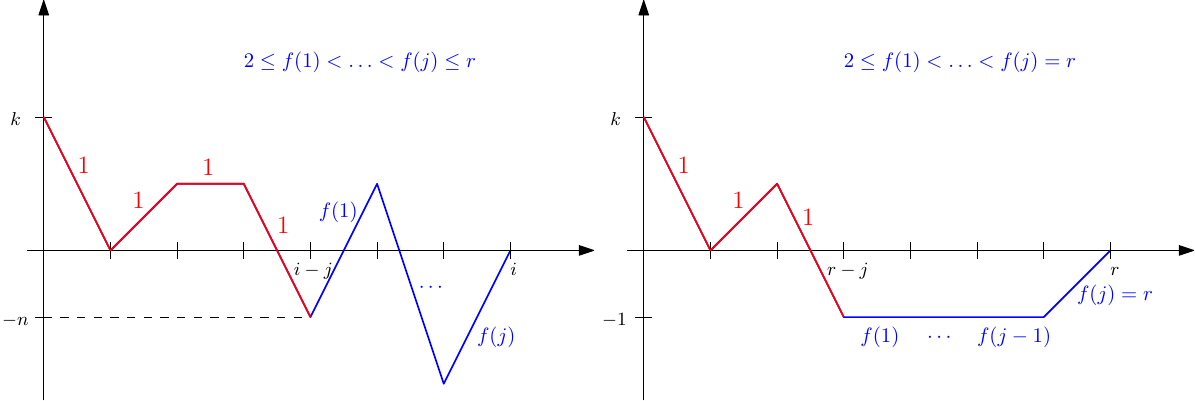}
	\caption{Left: A typical path contributing to the operator 
          $\widehat{D}^i_k$ from the last sum in
          \eqref{eq:DHatCombinatorial}. Right: A path contributing to the operator 
          $\widehat{D}^r_k$ from the last sum in
          \eqref{eq:DHatCombinatorial} that doesn't vanish after
          substituting $x^a_k=0$ for all $a \in [r], k \in \Z_{\geq
            1}$. Here the associated weight after the substitution is equal to $\J^{1}_{2}\J^{1}_{-1}\J^{1}_{2}(Q_{f(1)-1}-\bb\hbar(f(1)+j-3))\cdots (Q_{f(j)-1}-\bb\hbar(f(j-1)-1))(-1)^{r+1}\Lambda^{-1}$.}
	\label{fig:Paths2}
\end{figure} 

\begin{lemma}
	For any $i \in [r]$ and  $k \geq 0$, we  have the following combinatorial interpretation of the
	$\widehat{D}$-operators:
	\begin{align}
		\widehat{D}^{i}_k &= (-1)^{i+1}\sum_{\substack{\gamma \in \Gamma_{(0,k) \to
				(i,0)},\\ f\in \FI([i],[2..r])}}\wt(\gamma,f)
			+ \sum_{j=0}^{i} \sum_{\gamma \in \Gamma^{\geq0}_{(0,k) \to	(i-j,0)}} (-1)^{j}\Omega_{j} \prod_{u = 0}^{i-j}\wt_u(\gamma \cup \gamma_0,\mathbf 1) \nonumber\\
		&+\sum_{j=1}^{i-1}\sum_{n=1 }^\infty\sum_{\gamma \in \Gamma^{\geq0}_{(0,k) \to
				(i-j,-n)}}\sum_{\substack{\gamma' \in \Gamma_{(i-j,-n) \to
				(i,0)},\\ f\in \FI([j],[2..r])}}\wt(\gamma\cup\gamma',\tilde{f})\,, \label{eq:DHatCombinatorial}
	\end{align}
	where $\tilde{f}$ is the coloring of $\gamma\cup\gamma'$
	that colors each step of $\gamma$ by $1$, and the $\ell$-th
	step of $\gamma'$ by $f(\ell)$, and the coloring $\mathbf 1$
        colors each step of the associated path by $1$. The path
        $\gamma_0$ is a path that starts at $(i-j,0)$ and runs along
        the $X$-axis till $(i,0)$ for any $0\leq j\leq i$, and
        $\wt_0(\gamma,f) :=1$ by convention.
\end{lemma}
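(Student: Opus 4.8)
The plan is to prove \eqref{eq:DHatCombinatorial} by induction on $i$: denote by $E^i_k$ the right-hand side of \eqref{eq:DHatCombinatorial}, set $E^0_k:=0$, and show that the $E^i_k$ satisfy the same recursion \eqref{eq:Dhatdef} as the $\widehat D^i_k$; since that recursion determines the operators uniquely, this forces $E^i_k=\widehat D^i_k$. The base case $i=1$ is a direct check: $\widehat D^1_k=\mathsf W^1_k-\delta_{k,0}\Omega_1=\sum_{a=1}^r\J^a_k-\delta_{k,0}\Omega_1$, while in \eqref{eq:DHatCombinatorial} with $i=1$ the third sum is empty, the first sum contributes $\sum_{a=2}^r\J^a_k$ (a single step coloured $a\ge 2$), and the second sum contributes $\J^1_k-\delta_{k,0}\Omega_1$ (its $j=0$ and $j=1$ terms), which add up to $\widehat D^1_k$. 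For the inductive step the key is to decompose the combinatorial formula of \cref{lem:WkCombinatorial} according to the colour $f(1)$ of the first step: in $\mathsf W^i_k=\sum_{\gamma\in\Gamma_{(0,k)\to(i,0)},\,f\in\FI([i],[r])}\wt(\gamma,f)$, the subsum with $f(1)\ge 2$ is exactly the sum over $\FI([i],[2..r])$, i.e.\ (after the sign $(-1)^{i+1}$) the first sum of \eqref{eq:DHatCombinatorial}; the subsum with $f(1)=1$ has a leading $1$-coloured step of weight $\J^1_{k-k_2}$, or $\J^1_0-\hbar\bb(k_2+i-1)$ if that step is horizontal (the correction depends only on the total drop and so agrees with the recursion prefactor), followed by a path from $(1,k_2)$ to $(i,0)$ coloured by a strictly increasing function into $[2..r]$ --- that is, after a horizontal shift, a configuration counted by the first sum of $\widehat D^{i-1}_{k_2}$. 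Crucially, \eqref{eq:Dhatdef} only prepends $\J^1$-steps landing at heights $k_2\ge 0$, so the $k_2<0$ contributions are \emph{not} produced from $\widehat D^{i-1}$ by the recursion; these surplus contributions are precisely the ``bottom'' ($j=i-1$) terms of the third sum of \eqref{eq:DHatCombinatorial}, in which $\gamma$ is a single step dropping below the $X$-axis.

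\textbf{The prepending bijection.} The rest of the matching rests on one weight-preserving operation: prepending to a bridge $\gamma'\in\Gamma^{\ge 0}_{(0,k_2)\to(x',y')}$ with $k_2\ge 0$ a step from $(0,k)$ to $(1,k_2)$ produces a bridge in $\Gamma^{\ge 0}_{(0,k)\to(x'+1,y')}$, and conversely every bridge of length $\ge 1$ with non-negative starting height arises uniquely this way. Here the weight of the prepended step matches the recursion prefactor $\J^1_{k_1}-\delta_{k_1,0}\hbar\bb(k_2+i-1)$, and the horizontal-step corrections of $\gamma'$ are unchanged, because the correction at step $u$ depends only on the vertical drop to the right of $u$ and on the number of remaining steps, both of which transform compatibly when one unit of length is added at the front (the appended tail $\gamma_0$ along the $X$-axis, present only to generate these corrections, also shifts correctly). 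Applying this termwise in $j$: the second sum of $E^i_k$ equals $\sum_{k_1+k_2=k,\,k_2\ge 0}(\J^1_{k_1}-\delta_{k_1,0}\hbar\bb(k_2+i-1))$ applied to the second sum of $E^{i-1}_{k_2}$, plus the $j=i$ boundary term, which is the degenerate length-zero bridge weighted by $(-1)^i\Omega_i\delta_{k,0}=-(-1)^{i+1}\Omega_i\delta_{k,0}$ --- exactly the shift term of \eqref{eq:Dhatdef}. Likewise the third sum of $E^i_k$ equals the same prepending operation applied to the third sum of $E^{i-1}_{k_2}$ (for $1\le j\le i-2$), plus the new $j=i-1$ term identified above. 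Substituting $\widehat D^{i-1}_{k_2}=E^{i-1}_{k_2}$ into \eqref{eq:Dhatdef} and collecting, the two copies of the first-sum data --- one from $(-1)^{i+1}\mathsf W^i_k$ (the $f(1)=1$, $k_2\ge0$ piece), one prepended from $\widehat D^{i-1}$ --- cancel because their overall signs are $(-1)^{i+1}$ and $(-1)^{i}$, and what remains is precisely $E^i_k$.

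\textbf{Main obstacle.} The delicate point is the bookkeeping of signs and of the horizontal-step corrections across all the boundary cases: the degenerate length-zero bridge appearing as the $j=i$ term of the second sum, the length-one bridges appearing as the $j=i-1$ term of the third sum, and the borderline colourings where $\FI([i-1],[2..r])$ is enlarged to $\FI([i],[r])$ by setting $f(1)=1$; a misplaced $(-1)$ or a mismatched offset between $\hbar\bb(k_2+i-1)$ and the path-correction $\hbar\bb(\sum_{l>u}\gamma_l+(\text{length})-u)$ would break the matching. A convenient way to keep this under control is to strip off the scalar prefactors and define the ``bare'' $j$-indexed pieces $\mathcal T^{(i)}_j(k)$ of the second and third sums, prove via the prepending bijection the uniform recursions $\mathcal T^{(i)}_j(k)=\sum_{k_1+k_2=k,\,k_2\ge0}(\J^1_{k_1}-\delta_{k_1,0}\hbar\bb(k_2+i-1))\,\mathcal T^{(i-1)}_j(k_2)$ for $j$ below the top value in each sum, and separately evaluate the top pieces and the $f(1)=1$ part by a single one-step path computation against \eqref{eq:Jrep} and \cref{lem:WkCombinatorial}. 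This reduces the whole lemma to routine verifications of single-step weights.
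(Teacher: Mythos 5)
Your proposal follows essentially the same route as the paper's proof: induction on $i$, the decomposition of $\mathsf W^i_k$ from \cref{lem:WkCombinatorial} according to the colour $f(1)$ of the first step (this is exactly \eqref{eq:CombRecW}), cancellation of the $f(1)=1$, $k_2\ge 0$ piece against the prepended first-sum part of $\widehat D^{i-1}$, and the weight-preserving prepending of a first step onto the bridge sums, with the degenerate $j=i$ bridge producing the shift $-\delta_{k,0}\Omega_i$. Your treatment of the horizontal-step corrections (height plus number of remaining steps, both compatible with adding one step at the front because of the appended tail $\gamma_0$) is the right bookkeeping, and your base case is fine.

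However, the one point you yourself flag as delicate is exactly where your sketch does not close. The surplus $f(1)=1$, $k_2<0$ contributions inherit the overall factor $(-1)^{i+1}$ from $(-1)^{i+1}\mathsf W^i_k$, so they agree with the $j=i-1$ piece of the third sum only if that piece carries the sign $(-1)^{i-1}$; propagating this through the induction forces the whole third sum to be weighted by $(-1)^j$. Hence your conclusion that ``what remains is precisely $E^i_k$'' with the third sum as printed (unsigned) cannot be literally correct: already for $i=2$ the leftover from the recursion is $-\sum_{n\ge 1}\sum_{a=2}^{r}\J^1_{k+n}\J^a_{-n}$, whereas the unsigned third sum contributes $+\sum_{n\ge 1}\sum_{a=2}^{r}\J^1_{k+n}\J^a_{-n}$. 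In fact the displayed statement appears to have dropped the factor $(-1)^j$: the paper's own inductive computation produces $(-1)^j\wt(\gamma\cup\gamma',\tilde f)$ in the third sum, and it is the signed version that is used afterwards (e.g.\ in the proof of \cref{prop:Dkprops}). So your argument is the paper's argument in substance, but to turn it into a proof you must carry the $(-1)^{i+1}$ through the identification of the surplus terms and prove the signed form of \eqref{eq:DHatCombinatorial}; as written, the key matching step is asserted with an incorrect sign.
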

\begin{proof}
	We will prove the lemma by induction on $i$. When $ i = 1$, and $k\geq 1$ we see that \eqref{eq:DHatCombinatorial} reduces to the combinatorial formula \eqref{eq:WkCombinatorial} for the operators $\mathsf W^1_k$. As for the case $i=1$ and $k = 0$, recall that $\Omega_0 = 1$, and we directly obtain $\widehat{D}^{1}_0  = \sum_{a=1}^r \J^a_0 - \Omega_1 = 0 $. 
	
	Now assume that \eqref{eq:DHatCombinatorial}  holds for all $\widehat{D}^{\ell}_k $, such that $\ell \leq i-1$ and all $ k \geq 0$. Then, by plugging in the formula \eqref{eq:CombRecW} for the $\mathsf W^i_k$ into the definition  \eqref{eq:Dhatdef} of the $ \widehat{D}^{i}_k$, and using the inductive hypothesis for $ \widehat D^{i-1}_{k_2}$, we get the following formula:
	\begin{multline*}
	\widehat D^i_{k} = (-1)^{i+1} 
	\sum_{\substack{\gamma \in \Gamma_{(0,k) \to
				(i,0)},\\ f\in
                              \FI([i],[2..r])}}\wt(\gamma,f) + (-1)^i
                          \delta_{k,0} \Omega_i + (-1)^{i+1}
                          \sum_{\substack{k_1+k_2=k\\k_1<0}}
                          \left(\J^1_{k_1}-\delta_{k_1,0}\hbar\mathfrak{b}(k_2+i-1)\right)
                          \cdot \\
                          \cdot \sum_{\substack{\gamma \in \Gamma_{(0,k_2) \to
				(i-1,0)},\\ f\in \FI([i-1],[2..r])}}\wt(\gamma,f) +  \sum_{\substack{ k_2\geq 0 \\ k_1+k_2 = k}}\left(\J^1_{k_1} - \delta_{k_1,0} \hbar \mathfrak{b}(k_2+i-1)\right) \cdot\left( \sum_{j=1}^{i-1} (-1)^{j}\Omega_{j}\cdot \right. \\
	\cdot \left.\sum_{\gamma \in \Gamma^{\geq0}_{(0,k_2) \to	(i-1-j,0)}} \prod_{u = 0}^{i-1-j}  \wt_u(\gamma \cup \gamma_0,\mathbf 1 ) +\sum_{j=1}^{i-2}\sum_{n=1 }^\infty\sum_{\gamma \in \Gamma^{\geq0}_{(0,k_2) \to
			(i\sminus 1\sminus j,\sminus n)}}\sum_{\substack{\gamma' \in
                        \Gamma_{(i\sminus 1\sminus j,\sminus n) \to
				(i,0)},\\ f\in \FI([j],[2..r])}}
                          (\sminus 1)^{j}\wt(\gamma\!\cup\!\gamma',\tilde{f}) \right),
	\end{multline*} where $\tilde{f}$ is the coloring that colors all the steps of $\gamma$ by $1$ and the $l$-th step of $\gamma'$ by $f(l)$ and the path $\gamma_0 $ is the path that starts from $(i-1-j,0)$ and runs along the $X$-axis until $(i-1,0)$. The factor $\left(\J^1_{k_1}-\delta_{k_1,0}\hbar\alpha_0(k_2+i-1)\right) $ is the weight associated to the first step of a path that starts from $(0,k) $ and ends at $(i,0)$, such that its first step has increment $-k_1$, and hence we will concatenate this step with the paths that appear to the right (after shifting those paths by one unit in the positive $X$-direction) to obtain the following expression:
	\begin{multline*}
		\widehat D^i_{k} = (-1)^{i+1} 
		\sum_{\substack{\gamma \in \Gamma_{(0,k) \to
				(i,0)},\\ f\in
                              \FI([i],[2..r])}}\wt(\gamma,f)+\sum_{j=1}^{i} \sum_{\gamma \in \Gamma^{\geq0}_{(0,k) \to	(i-j,0)}} (-1)^{j}\Omega_{j} \prod_{u = 0}^{i-j} \wt_u(\gamma \cup \gamma_0,\mathbf 1) \\ 
		+ 	\sum_{n=1 }^\infty\sum_{\gamma \in
                  \Gamma^{\geq 0}_{(0,k) \to (1,\sminus n)}}\sum_{\substack{\gamma' \in \Gamma_{(1,\sminus n) \to
				(i,0)},\\ f\in
                              \FI([i\sminus 1],[2..r])}}(-1)^{i-1}\wt(\gamma\cup\gamma',\tilde{f})  + \\
		+  \sum_{j=1}^{i-2}\sum_{n=1 }^\infty\sum_{\gamma \in \Gamma^{\geq0}_{(0,k) \to
                    (i-j,-n)}}\sum_{\substack{\gamma' \in
                    \Gamma_{(i\sminus j,\sminus n) \to
				(i,0)},\\ f\in
                              \FI([j],[2..r])}} (-1)^{j} \wt(\gamma\cup\gamma',\tilde{f}),
	\end{multline*}  where $\gamma_0$ is now the path along the $X$-axis from $(i-j,0)$ to $(i,0)$. We have  combined the term $(-1)^i \delta_{k,0} \Omega_i   $ with the sum over the rest of the $\Omega_j$ to get the above expression. The sum over $j$ in the last line, can be expanded to include $j = i-1$, which absorbs the second to last line into the last line, and the resulting expression matches \eqref{eq:DHatCombinatorial}, which concludes the proof.
\end{proof}

\subsubsection{The operators $D_k$}

Recall that one of the motivations for defining the new  $\widehat{D}$-operators is precisely to be able to do the substitutions $(x^a_\ell)_{a \in[2..r],\ell\geq1} = 0$ and obtain operators that annihilate the reduced partition function $ \mathcal{Z}^{\operatorname{red}}$. For this purpose, we define new operators $D_k $ as follows. For any $k \geq 0$, define
\begin{equation}\label{eq:defDk}
	D_k := \Lambda \, \left( \widehat{D}^r_k\big|_{\,x^a_\ell = 0  \text{ for } a \in [2..r] \,,\, \ell \geq 1 }\right).
\end{equation} The operators $D_k$  depend on the  integer $ r\geq 2$, but we omit this for notational simplicity. 

We show that the operators $D_k$ annihilate the reduced partition function $\mathcal Z^{\operatorname{red}}$. During the course of the proof, we also prove an explicit combinatorial formula for $D_k$.

\begin{proposition}\label{prop:Dkprops} Consider the operators $D_k$ defined in \eqref{eq:defDk}.
	\begin{enumerate}[label=(\alph*)]
		\item The reduced partition function $\mathcal Z^{\operatorname{red}}$ satisfies the following constraints,
		\[
		D_k \mathcal Z^{\operatorname{red}} = 0 \qquad \text{
                  for } k \geq 0 \,.
		\] 
		\item The operators $D_k$ take the following form for any $k \geq 0$:
                  \begin{multline}
                    \label{eq:Dksimpler}
		D_k = \Lambda \sum_{j=0}^{r} \sum_{\gamma \in
                  \Gamma^{\geq0}_{(0,k) \to	(r\sminus j,0)}}
                \prod_{u = 0}^{r-j}
                  \wt_u(\gamma\!\cup\!\gamma_0,\mathbf 1)
                \sum_{1 \leq n_1<\cdots<n_{j}\!\leq r}
		\prod_{m=1}^{j}\big(P_{r\sminus n_m\splus 1}\splus
                \mathfrak{b}\hbar(r\sminus n_m\splus m\sminus 1)\big) 
		+ \\
		+(-1)^{r+1}\sum_{j=0}^{r-1} \sum_{\gamma \in
                  \Gamma^{\geq0}_{(0,k) \to	(r\sminus j,\sminus
                    1)}}\prod_{u = 0}^{r-j}
                  \wt_u(\gamma\!\cup\!\gamma_0,\mathbf 1) 
                \sum_{1 \leq n_1 < \cdots <n_{j\sminus 1} \leq r\sminus
                2}
		\prod_{m=1}^{j}\big(\sminus Q_{r\sminus n_m\sminus 1}+
                \mathfrak{b}\hbar(r\sminus n_m\splus m\sminus 2)\big),
			\end{multline}
			where $\gamma_0$ is the path that runs along the $ X $-axis from $(r-j,0) \to (r,0)$, and $\gamma_{-1}$ is the path that is horizontal from  $(r-j,-1)$ to $(r-1,-1)$ and then goes up to $(r,0)$.
	\end{enumerate}
\end{proposition}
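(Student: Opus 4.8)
The plan is to prove (a) and (b) simultaneously, by pushing the substitution \textbf{S2} through the combinatorial formula \eqref{eq:DHatCombinatorial} for $\widehat D^r_k$ and combining it with the constraints $\widehat D^r_k\mathcal Z=0$ of \cref{lem:DhatZ=0}. Specialising \eqref{eq:DHatCombinatorial} to $i=r$, its first sum vanishes identically, since $\FI([r],[2..r])=\emptyset$: there is no strictly increasing map from a set of size $r$ into one of size $r-1$. Thus $\widehat D^r_k$ consists only of the second group of terms (bridges $\gamma$ to $(r-j,0)$ completed along the $X$-axis, all steps coloured by $\mathbf 1$) and the third group (bridges $\gamma$ to $(r-j,-n)$ completed by a path $\gamma'$ from $(r-j,-n)$ to $(r,0)$ coloured by a strictly increasing $f\in\FI([j],[2..r])$).

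For the second group every weight uses only the modes $\J^1_\ell$, which act on the $x^1$-variables alone; hence this group is fixed by \textbf{S2}, and after multiplying by $\Lambda$ and inserting the explicit value of $\Omega_j$ from \eqref{eq:Lambdadef} it reproduces the first line of \eqref{eq:Dksimpler}. The crux is the third group. I would first observe that $\wt(\gamma\cup\gamma',\tilde f)$ factorises as $\wt(\gamma)\cdot\wt(\gamma')$, with $\wt(\gamma)$ a differential operator in the $x^1$-variables only and $\wt(\gamma')$, placed to its right, a differential operator in the variables $\{x^a:a\geq 2\}$ only; moreover the colours of the steps of $\gamma'$ are pairwise distinct (as $f$ is strictly increasing), so the factors of $\wt(\gamma')$ commute and no contractions arise. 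Next I would analyse $\wt(\gamma')$ under \textbf{S2} using \eqref{eq:Jrep}: a flat step contributes a scalar, an up-step of height $m$ coloured $a$ contributes $\J^a_{-m}$ which becomes $(-1)^r\Lambda^{-1}$ if $(a,m)=(r,1)$ and $0$ otherwise, and a down-step of height $m$ contributes $\hbar\partial_{x^a_m}$ with $a\geq 2$. Since the colour $r$ can occur only at the last step of $\gamma'$, at most one up-step survives \textbf{S2}; and since flat and down-steps cannot raise the height while the total rise along $\gamma'$ equals $n\geq 1$, a surviving $\gamma'$ must have exactly one up-step — the last, of height one and colour $r$ — no down-steps, and hence $n=1$. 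This pins $\gamma'$ down to the path $\gamma_{-1}$, the preceding flat steps being coloured by an increasing tuple valued in $[2..r-1]$.

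With this in hand I would verify, for each path, the ``commutation'' identity $\big(\wt(\gamma)\wt(\gamma')\,\mathcal Z\big)\big|_{\textbf{S2}}=\wt(\gamma)\big(\wt(\gamma')\big|_{\textbf{S2}}\big)\mathcal Z^{\operatorname{red}}$: the operator $\wt(\gamma)$ commutes with the restriction because it involves only $x^1$; if $\gamma'$ is of the surviving type then $\wt(\gamma')\mathcal Z=(\text{scalar})\cdot\big(\hbar x^r_1+(-1)^r\Lambda^{-1}\big)\mathcal Z$, and the restriction sends $x^r_1\mapsto 0$, giving both sides; and if $\gamma'$ is not of this type, the increment analysis forces an up-step with $(a,m)\neq(r,1)$, so the monomial $x^a_m$ ($a\geq 2$) can be pulled to the far left of the operator and the restriction kills both sides (the right one also because $\partial_{x^a_m}\mathcal Z^{\operatorname{red}}=0$ whenever down-steps are present). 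Summing over all paths yields $\big(\widehat D^r_k\mathcal Z\big)\big|_{\textbf{S2}}=\big(\widehat D^r_k\big|_{\textbf{S2}}\big)\mathcal Z^{\operatorname{red}}$, so multiplying $\widehat D^r_k\mathcal Z=0$ by $\Lambda$ gives $D_k\mathcal Z^{\operatorname{red}}=0$ — part (a) — and in particular $D_k$ involves only the $x^1$-variables. Reading off the surviving contributions to the third group (the factor $(-1)^r\Lambda^{-1}$ from the last step of $\gamma_{-1}$, the scalars $\J^a_0-\mathfrak b\hbar(\cdots)$ of its flat steps with $\J^a_0=Q_{a-1}-\hbar\mathfrak b(a-1)$, and the $x^1$-weight of the bridge $\gamma$ to $(r-j,-1)$), then multiplying by $\Lambda$, produces the second line of \eqref{eq:Dksimpler}, which is (b).

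I expect the main obstacle to be the bookkeeping in the third group: checking that the scalars from the flat steps at height $-1$ — the modes $\J^a_0$ from \eqref{eq:Jrep} together with the $-\mathfrak b\hbar(\,\cdot\,)$ corrections in the weight of a flat step, whose argument counts the steps to the right and so sees the final up-step of $\gamma_{-1}$ — combine, after the cancellation $\Lambda\cdot(-1)^r\Lambda^{-1}$, exactly into the $Q$-dependent products $\prod_m\big(-Q_{r-n_m-1}+\mathfrak b\hbar(r-n_m+m-2)\big)$ of \eqref{eq:Dksimpler}, with the signs, the reindexing $a\leftrightarrow n_m$, and the summation range $1\leq n_1<\cdots\leq r-2$ all matching. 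Keeping straight the endpoint height of the various bridges — which, alongside the label $k$ in \eqref{eq:Dhatdef}, behaves like a second recursion parameter — is where the argument is most delicate.
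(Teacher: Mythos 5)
Your proposal is correct and follows essentially the same route as the paper's proof: specialize \eqref{eq:DHatCombinatorial} to $i=r$, note the first sum dies because $\FI([r],[2..r])=\emptyset$, keep the $\Omega_j$-terms (colored entirely by $\mathbf 1$, hence unaffected by \textbf{S2}), and show via \eqref{eq:Jrep} that in the third group only $n=1$ paths ending with the step $\J^r_{-1}$ survive, so that the substitution commutes with acting on $\mathcal Z$ and \cref{lem:DhatZ=0} gives (a), while plugging in $\Omega_j$ and the zero-mode scalars gives (b). Your increment/commutation analysis (pulling the multiplication operators $x^a_m$, $a\geq 2$, to the left using distinctness of the colors) is in fact slightly more explicit than the paper's; the only piece you leave unexecuted is the final reindexing $n_m=r-f(j-m)$ that turns the flat-step scalars into the $Q$-products of \eqref{eq:Dksimpler}, which is exactly the computation the paper performs at the end.
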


\begin{proof}
	We will prove that for all $ k \geq 0 $, 
	\begin{equation}\label{eq:DZred}
		D_k \mathcal Z^{\operatorname{red}}  =\left( \widehat{D}^r_k \mathcal Z \right)\bigg|_{ x^a_\ell = 0 \text{ for } a \in [2..r] \,,\, \ell \geq 1},
	\end{equation}
	which implies part (a) of the proposition. Indeed,  \cref{lem:DhatZ=0} specialized to the case $i = r$  states that $\widehat{D}^r_k \mathcal Z = 0$ for all $k\geq 0$.
	
	Notice first that the reduction of the modes for $a \neq 1$ does not affect the action of the $\J^1_k$, i.e., we have
	\[
	\left(\J^1_{k_1}\cdots \J^1_{k_n} \mathcal Z\right)\bigg|_{ x^a_\ell = 0 \text{ for } a \in [2..r] \,,\, \ell \geq 1 }= \J^1_{k_1}\cdots \J^1_{k_n} \mathcal Z^{\operatorname{red}}\,.
	\] With this in mind, let us look at the three terms appearing in formula \eqref{eq:DHatCombinatorial} for $\widehat{D}^r_k$, which could contribute to the RHS of \eqref{eq:DZred}. The first term in \eqref{eq:DHatCombinatorial} vanishes identically when $i  = r$. Only paths where the coloring is always $1$ appear in the second term, and hence survives in its entirety. 
	
	Let us analyze the third term now, which involves paths
        $\gamma' \in \Gamma_{(j,-n) \to (r,0)}$  colored by
        $[2..r]$. As $n \geq 1$, all these paths $\gamma'$  start
        strictly below the $X$-axis, and as they end on the $X$-axis,
        they must touch the $X$-axis at some point. The segment that
        goes up and touches the $X$-axis will then contribute the
        weight $\J^a_{-\ell}$ for some $a \in[2..r]$ and $\ell \geq
        1$. Unless $a = r$ and $\ell = 1$, in which case $\J^r_{-1} =
        x^r_1 +\frac{(-1)^r}{\Lambda}$ (see \eqref{eq:Jrep}), all
        these terms vanish under the substitution $x^a_\ell = 0 $ for
        $a\in[2..r],\ell\geq1$. This means that the only paths
        $\gamma'$ that survive are such that $n = 1$, are horizontal
        from $(r-j,-1)$ to $(r-1,-1)$, and the last step $\gamma'_r$
        is colored by $r$ and connects $(r-1,-1)$ to $(r,0)$ (such a
        path is illustrated on the right side in~\cref{fig:Paths2}). The weights of these surviving paths are polynomials in the $\J^1_{\ell}$ and hence are not affected by the substitution, as noted before. This proves \eqref{eq:DZred}, and hence part (a) of the lemma. 
	
	As for part (b) of the proposition, the preceding analysis gives all the terms in  $\widehat{D}^r_k$ that survive after the reduction $x^a_\ell = 0$. Collecting them together, we get
		\begin{multline*}
	\frac{D_k}{ \Lambda} = 	\sum_{j=0}^{r} \sum_{\gamma \in
          \Gamma^{\geq0}_{(0,k) \to	(r\sminus j,0)}} (\sminus 1)^{j}\Omega_{j}  \prod_{u=0}^{r-j} \wt_u(\gamma\cup\gamma_0,\mathbf 1)+ \sum_{j=1}^{r-1}\sum_{\substack{\gamma \in \Gamma^{\geq0}_{(0,k) \to
				(r\sminus j,\sminus 1)},\\ f \in
                              \FI([j\sminus 1],[2..r\sminus 1]}}(-1)^{j}\wt(\gamma\cup\gamma_{-1},\tilde{f}_{-1})\bigg|_{x^r_1 = 0}\,,
	\end{multline*} where $\gamma_{-1}$ is the path that is horizontal from  $(r-j,-1)$ to $(r-1,-1)$ and then goes up to $(r,0)$, and $\tilde{f}_{-1}$ is the coloring that colors all the steps of $\gamma$ by $1$, the $\ell$-th step of $\gamma_{-1}$ by $f(\ell)$ for any $\ell \in [j-1]$, and the last step of $\gamma_{-1}$ by $r$. Now, we can extract the contributions coming from $\gamma_{-1}$ explicitly, which gives 
		\begin{multline*}
		D_k = 	\Lambda   \sum_{j=0}^{r} \sum_{\gamma \in
                  \Gamma^{\geq0}_{(0,k) \to	(r-j,0)}}
                (-1)^{j}\Omega_{j} \prod_{u = 0}^{r-j} \wt_u(\gamma
                \cup \gamma_0,\mathbf 1)+(-1)^{r+1} \sum_{j=1}^{r-1}
                \cdot\\
                \cdot\sum_{\gamma \in \Gamma^{\geq0}_{(0,k) \to
				(r-j,-1)}} \left( \prod_{u = 0}^{r-j} \wt_u(\gamma\cup\gamma_{-1},\mathbf 1) \right) \sum_{1 \leq n_1<\cdots<n_{j-1} \leq r-2 } \prod_{m=1}^{j-1} \left(-Q_{r-n_m-1}+\hbar \mathfrak{b} (r+m-n_m-2)\right) \,,
	\end{multline*} where we have set $n_m = r - f(j-m)$. Finally we plug in the expression \eqref{eq:Lambdadef} for the $\Omega_j$, and $\Omega_0 = 1$ to get the expression \eqref{eq:Dksimpler} stated in part (b) of the proposition.
\end{proof}

Having proved that the operators $D_k$ annihilate the reduced partition function $\mathcal Z^{\operatorname{red}}$, we would like to understand them better. We will find a simpler expression for the $D_k$  which will help us compare these operators with the cut-and-join operators. Before that, we need to state a technical lemma which will be useful in the proof of the simplification.

 \begin{lemma}\label{lem:xyidentity}
	We have the following identity between polynomials in $u_1,\dots,u_\ell,w_1,\dots,w_\ell$:
	\begin{equation}\label{eq:3DimStirling}
		\prod_{k=1}^\ell (u_k+w_k) = \sum_{I \sqcup J = [\ell]} \prod_{i \in I}(u_i+|J \cap [i]|) \prod_{j \in J} (w_j-(|J \cap [j-1]|))\,.
	\end{equation} 
\end{lemma}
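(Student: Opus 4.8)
The plan is to prove \eqref{eq:3DimStirling} by induction on $\ell$, peeling off the largest index $\ell$ from the right-hand side. Expanding the left-hand side as $\prod_{k=1}^\ell(u_k+w_k)=\sum_{I\sqcup J=[\ell]}\prod_{i\in I}u_i\prod_{j\in J}w_j$, the content of the lemma is that the shifts $|J\cap[i]|$ and $-|J\cap[j-1]|$ on the right-hand side, summed over all $(I,J)$, contribute nothing net. The base case $\ell=1$ is immediate: the two terms are $u_1+|J\cap[1]|=u_1$ (for $I=\{1\}$) and $w_1-|J\cap[0]|=w_1$ (for $J=\{1\}$), using the convention $[0]=\emptyset$, and they sum to $u_1+w_1$.

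\textbf{Inductive step.} Assume the identity for $\ell-1$ and split the right-hand sum for $\ell$ according to whether $\ell\in I$ or $\ell\in J$. If $\ell\in I$, then $J\subseteq[\ell-1]$, so the factor indexed by $\ell$ equals $u_\ell+|J\cap[\ell]|=u_\ell+|J|$, while for every $i\in I\setminus\{\ell\}$ and $j\in J$ one has $|J\cap[i]|$, $|J\cap[j-1]|$ unchanged when $J$ is regarded as a subset of $[\ell-1]$. Writing $I'=I\setminus\{\ell\}$, this part equals $\sum_{I'\sqcup J=[\ell-1]}(u_\ell+|J|)\prod_{i\in I'}(u_i+|J\cap[i]|)\prod_{j\in J}(w_j-|J\cap[j-1]|)$. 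If instead $\ell\in J$, write $J=J'\cup\{\ell\}$ with $J'\subseteq[\ell-1]$; the factor indexed by $\ell$ is $w_\ell-|J\cap[\ell-1]|=w_\ell-|J'|$, and $|J\cap[i]|=|J'\cap[i]|$, $|J\cap[j-1]|=|J'\cap[j-1]|$ for all remaining $i,j\le\ell-1$. This part equals $\sum_{I\sqcup J'=[\ell-1]}(w_\ell-|J'|)\prod_{i\in I}(u_i+|J'\cap[i]|)\prod_{j\in J'}(w_j-|J'\cap[j-1]|)$. Adding the two contributions over the common index set of pairs $(I,J)$ partitioning $[\ell-1]$, the scalar $(u_\ell+|J|)+(w_\ell-|J|)=u_\ell+w_\ell$ factors out, and what remains is $(u_\ell+w_\ell)\sum_{I\sqcup J=[\ell-1]}\prod_{i\in I}(u_i+|J\cap[i]|)\prod_{j\in J}(w_j-|J\cap[j-1]|)$, which by the inductive hypothesis equals $(u_\ell+w_\ell)\prod_{k=1}^{\ell-1}(u_k+w_k)=\prod_{k=1}^\ell(u_k+w_k)$, as desired.

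\textbf{Main difficulty.} There is no genuine obstacle here; the only care needed is the bookkeeping that deleting $\ell$ from $I$ or from $J$ leaves all the remaining shift quantities intact, which is precisely the observation $|J\cap[i]|=|(J\setminus\{\ell\})\cap[i]|$ and $|J\cap[j-1]|=|(J\setminus\{\ell\})\cap[j-1]|$ for $i,j\le\ell-1$. One could alternatively give a purely bijective proof by interpreting both sides as weighted enumerations of decorated partitions of $[\ell]$ into an $I$-block and a $J$-block, but the short induction above suffices for our use of the lemma in simplifying the operators $D_k$.
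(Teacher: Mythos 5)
Your proof is correct and follows essentially the same route as the paper: induction on $\ell$, splitting the right-hand sum according to whether an extremal index lies in $I$ or in $J$. The only (harmless) difference is that you peel off the largest index $\ell$, so all counts $|J\cap[i]|$, $|J\cap[j-1]|$ are untouched and the factor $(u_\ell+w_\ell)$ pulls out before invoking the hypothesis, whereas the paper peels off the index $1$ and must apply the inductive hypothesis at shifted arguments $u_{i+1}+1$, $w_{i+1}-1$ in the case $1\in J$ — your variant is, if anything, slightly cleaner.
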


\begin{proof}
	We will prove the lemma by induction on $\ell$. For $\ell=1$, the statement is obvious. Let $F_\ell(\mathbf u,\mathbf w)$ denote the polynomial that appears on the RHS of \eqref{eq:3DimStirling}. For any $\ell > 1$,  $F_\ell(\mathbf u,\mathbf w)$ can be split into two parts, depending on whether $1$ appears in $I$ or $J$:
	\begin{multline*}
		u_1 \sum_{I \sqcup J = [2..\ell]} \prod_{i \in
			I}(u_i+|J \cap [i]|)\prod_{j \in J}(w_j-(|J \cap [j-1]|))+\\
		+w_1  \sum_{I \sqcup J = [2..\ell]} \prod_{i \in
			I}(u_i+1+|J\cap [i]|)\prod_{j \in J}(w_j-(1+|J \cap [j-1]|)) = \\
		=u_1F_{\ell-1}(\mathbf u',\mathbf w')+ w_1F_{\ell-1}(\mathbf u'',\mathbf w''),
	\end{multline*}
	where we define $u_i' = u_{i+1}$, $u_i''=u_{i+1}+1$, $w_i' = w_{i+1}$, $w_i'' =	w_{i+1}-1$ for $i \in [\ell-1]$. Then, using the inductive hypothesis for $F_{\ell-1}$, we get 
	\[ 
	F_\ell(\mathbf u, \mathbf w) = u_1\prod_{k=2}^\ell(u_k+w_k)+w_1
	\prod_{k=2}^\ell(u_k+w_k) = \prod_{k=1}^\ell(u_k+w_k),
	\]
	which completes the proof.
\end{proof}
 
 Now, we prove a simplified formula for the operators $D_k$ in terms of weighted paths with the weight $\widetilde{\operatorname{wt}}$ (defined in \eqref{eq:tildewt}) instead of the weight $\operatorname{wt}$.

 \begin{lemma}
 	For any $k \geq 0$, we have the following combinatorial formula for the operators $D_k$:
 		\begin{equation}
 		\label{eq:combinatorial2}
 		D_k =  \Lambda \sum_{\gamma \in \Gamma^{\geq 0}_{(0,k)\to (r,0)}}\widetilde{\wt}\left(\gamma| \left(\mathbf P \right)\right) + (-1)^{r-1} \sum_{\gamma \in \Gamma^{\geq 0}_{(0,k)\to (r-1,-1)}}\widetilde{\wt}\left(\gamma|\left(-\mathbf Q,0\right) \right),
 	\end{equation} where we define $\left(\mathbf P\right) := \left(P_1,\ldots, P_{r}\right) $ and $\left(-\mathbf Q,0\right) := \left(-Q_1,\ldots, -Q_{r-2},0\right)$.
 \end{lemma}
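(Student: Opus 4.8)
The plan is to derive the formula \eqref{eq:combinatorial2} directly from the expression for $D_k$ already obtained in \cref{prop:Dkprops}(b), namely \eqref{eq:Dksimpler}; the one genuinely nontrivial ingredient is the polynomial identity \cref{lem:xyidentity}, which is exactly what is needed in order to pass between the two conventions for the weight of a horizontal step. In \eqref{eq:Dksimpler} a horizontal step sitting at position $u$ and height $\ell$ inside a path of length $N$ contributes $\J^1_0-\hbar\bb(\ell+N-u)=-\hbar\bb(\ell+N-u)$ (recall that $\J^1_0=0$ in the representation at hand), whereas in \eqref{eq:combinatorial2} the convention $\widetilde{\wt}$ of \eqref{eq:tildewt} makes the same step contribute $u_i-\hbar\bb\ell$ for the prescribed entry $u_i$; the discrepancy $-\hbar\bb(N-u)$ is precisely what \cref{lem:xyidentity} will redistribute. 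Since the $\mathbf P$-- and $\mathbf Q$--parts of both \eqref{eq:Dksimpler} and \eqref{eq:combinatorial2} have the same shape, I would first treat the $\mathbf P$--part; the only features special to the $\mathbf Q$--part are the overall sign $(-1)^{r-1}$ and the trailing entry $0$ of the vector $(-\mathbf Q,0)$, both of which come from the closing step of $\gamma_{-1}$ being colored by $r$ together with the value $\J^r_{-1}=x^r_1+(-1)^r/\Lambda$ prescribed in \eqref{eq:Jrep}, exactly as they already arise in the derivation of \eqref{eq:Dksimpler} and in \eqref{eq:DHatCombinatorial}.

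For the $\mathbf P$--part I would first note, using \eqref{eq:Lambdadef}, that the inner $\mathbf P$--sum of \eqref{eq:Dksimpler} equals $(-1)^j\Omega_j$, so the $\mathbf P$--part of $D_k/\Lambda$ is $\sum_{j=0}^r(-1)^j\Omega_j\sum_{\gamma'\in\Gamma^{\geq 0}_{(0,k)\to(r-j,0)}}\prod_u\wt_u(\gamma'\cup\gamma_0,\mathbf 1)$, which is literally the $\Omega$--sum term of $\widehat D^r_k$ in \eqref{eq:DHatCombinatorial}. I would then fix a bridge $\gamma\in\Gamma^{\geq 0}_{(0,k)\to(r,0)}$ and gather exactly those summands above whose sequence of non--horizontal steps (the steps carrying a $\J^1_\bullet$) coincides with that of $\gamma$: these are precisely the terms in which $\gamma'$ is obtained from $\gamma$ by deleting some subset of its horizontal steps, the index set $1\le n_1<\cdots<n_j\le r$ of \eqref{eq:Lambdadef} recording the positions of the deleted steps through the reversal $n_m=r-(\text{position})+1$. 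Writing the horizontal steps of $\gamma$ at positions $p_1<\cdots<p_t$ with heights $\ell_1,\dots,\ell_t$, so that $\widetilde{\wt}(\gamma\,|\,(\mathbf P))$ factors as the product of the $\J^1_\bullet$ over the non--horizontal steps times $\prod_{s=1}^t(P_{p_s}-\hbar\bb\ell_s)$, the claim reduces --- after cancelling the common $\J^1_\bullet$--prefactor and summing over $\gamma$ --- to a polynomial identity expressing $\prod_{s=1}^t(P_{p_s}-\hbar\bb\ell_s)$ as a sum over all ways of splitting the horizontal steps into ``kept'' and ``deleted'' steps, where the kept steps contribute their $\wt$--weights computed inside the contracted length--$r$ bridge $\gamma'\cup\gamma_0$ (so with shift $-\hbar\bb(\ell+\#\{\text{steps to the right}\})$) and the deleted steps contribute the shifted $\mathbf P$--factors $P_{p}+\hbar\bb(\,\cdot\,)$ read off from \eqref{eq:Dksimpler}. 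This last identity is what \cref{lem:xyidentity} supplies: one substitutes its variables $u_k,w_k$ by explicit affine expressions in $\ell_k,p_k,P_{p_k}$ and $(\hbar\bb)^{\pm 1}$, and matches the partial--sum counters $|J\cap[i]|$ and $|J\cap[j-1]|$ with the numbers of horizontal steps removed to the left, respectively to the right, of a given step.

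I expect the real work to lie entirely in this matching. It is elementary but delicate: one must track carefully how deleting a horizontal step shifts the positions --- hence the numbers of ``steps to the right'' --- of the survivors inside $\gamma'\cup\gamma_0$; reconcile the two ways of writing the deletion counts via $\#\{\text{deleted left of }p\}+\#\{\text{deleted right of }p\}=\#\{\text{deleted}\}-1$ for a deleted position $p$ (this is the kind of bookkeeping that \eqref{eq:3DimStirling} is built to absorb); and get every index reversal together with every off--by--one in the shifts --- for instance the $r-n_m+m-1$ of \eqref{eq:Dksimpler} against the $r-n_m+m-2$ in its $\mathbf Q$--analogue --- and every sign exactly right. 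Once the identity is established one reattaches the $\J^1_\bullet$--prefactor of $\gamma$, sums over all bridges $\gamma$, and repeats the argument for the $\mathbf Q$--part, where one additionally uses \eqref{eq:DHatCombinatorial} and \eqref{eq:Jrep} to check that the height--$(-1)$ endpoint together with the forced $r$--colored closing step is exactly what produces the weight vector $(-\mathbf Q,0)$ and the sign $(-1)^{r-1}$.
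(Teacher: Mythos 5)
Your proposal follows essentially the same route as the paper's own proof: start from \eqref{eq:Dksimpler}, group its terms according to the length-$r$ bridge obtained by re-inserting the deleted horizontal steps (a bijection between bridges with a marked subset of horizontal steps and the pairs (contracted path, positions $n_1<\cdots<n_j$)), and reduce the conversion between the $\wt$- and $\widetilde{\wt}$-weights of horizontal steps to the polynomial identity of \cref{lem:xyidentity}, treating the $\mathbf Q$-term analogously. The only piece you leave as "delicate bookkeeping" is precisely what the paper pins down in one line --- the substitution $u_i = y_i + r - n_i$, $w_j = -P_{r+1-n_j}/(\hbar\bb) - r + n_j$ (note both counters $|J\cap[i]|$ and $|J\cap[j-1]|$ record deletions to the \emph{left} in the reversed indexing, the left/right flip being absorbed by the final relabeling $n\mapsto r+1-n$) --- so the plan is correct and matches the paper's argument.
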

 \begin{proof}
 	Recall equation \eqref{eq:Dksimpler}  for the operators $D_k$. Let us consider the first term that depends on $\mathbf P$. Consider a path $\gamma \in \Gamma^{\geq 0}_{(0,k) \to (r,0)}$, and let $ 1\leq n_1<\cdots< n_m \leq r$ denote the positions of the horizontal steps in $\gamma$, i.e., $\gamma$ has a  segment $s_n = (1,0)$ if and only if $n \in \{n_1,\ldots, n_m\}$. Let $s_{n_i}$ connect $(n_i-1, y_i)$ to $(n_i,y_i) $. For any subset $J \subset [m]$, we define the path $\gamma_J \in \Gamma^{\geq 0}_{(0,k) \to (r-|J|,0)}$ as the path obtained from $\gamma$ by removing all the horizontal steps $s_{n}$ for any $n =n_j$ with $ j \in J $. Then, we claim that the associated weights coincide:
 	\begin{equation}\label{eq:claim}
 		\widetilde{\wt}\left(\gamma|\left(\mathbf P\right)^{\operatorname{op}}\right) =  \sum_{J \subset [m]} \prod_{\ell=1}^{r-|J|} \wt_\ell(\gamma_J\cup \gamma_0, \mathbf 1) \prod_{j \in J}  \left(P_{r+1-n_j}+ \hbar \mathfrak{b}\left(r-n_j +\left|J\cap[j-1]\right|\right)\right),
 	\end{equation} where $\gamma_0 $ is the  path running along
        the $X$-axis from $(r-j,0)$ to $(r,0)$, and we use the
        notation $  \left(\mathbf P\right)^{\operatorname{op}}= \left( P_r, P_{r-1},\ldots, P_1\right) $. Notice that the contributions from the weights $\wt$ and $\widetilde{\wt}$ coincide for non-horizontal steps. Thus, in order to prove the claim, it suffices to prove the contributions to \eqref{eq:claim} for horizontal steps. Hence, \eqref{eq:claim} is equivalent to the following equation:
  \begin{equation*}
  	 \frac{\prod_{i=1}^m \left(P_{r+1-n_i} \sminus \hbar
             \mathfrak{b} y_i) \right)}{ (-\hbar \mathfrak{b})^m} =
         \sum_{I \sqcup J = [m]}   \prod_{i\in I}  \big( y_i +
         r-n_i+|J\cap[i]| \big)  \prod_{j \in J}
         \left(\frac{-P_{r+1-n_j}}{\hbar \mathfrak{b}} \sminus r \splus n_j \sminus\left|J\cap[j-1]\right|\right),
  \end{equation*} which is precisely the content of \cref{lem:xyidentity} with the choice $w_j = \frac{-P_{r+1-n_j}}{\hbar \mathfrak{b}}- r+ n_j$ and $u_i =  y_i + r-n_i$.
  
  Finally, by considering all possible choices of $m$,  we get a
  bijection between the set of paths $\gamma \in \Gamma^{\geq
    0}_{(0,k) \to (r,0)} $ that appears in \eqref{eq:combinatorial2},
  and the set of paths $\gamma_J \in \Gamma^{\geq 0}_{(0,k) \to
    (r-j,0)}$ along with the choice of an integer $ 0\leq j\leq r$ and
  integers $ 1 \leq n_1 < \cdots < n_j \leq r $ appearing in the first
  term of \eqref{eq:Dksimpler}. Thus, summing \eqref{eq:claim} over
  all paths $\gamma \in \Gamma^{\geq 0}_{(0,k) \to (r,0)} $ and
  shifting $n_j \to r+1-n_j$ on the LHS, gives that the sum
  $\sum_{\gamma \in \Gamma^{\geq 0}_{(0,k)\to (r,0)}}\widetilde{\wt} \left(\gamma|\left(\mathbf P\right)\right) $ is equal to 
 \[
\sum_{j=0}^{r} \sum_{\gamma \in \Gamma^{\geq0}_{(0,k) \to	(r-j,0)}} \prod_{u=1}^{r-j}  \wt_u(\gamma \cup \gamma_0,\mathbf 1) \sum_{1 \leq n_1 < \dots <n_{j} \leq r}
  	 \prod_{m=1}^{j}\big(P_{r+1-n_m}+ \hbar
         \mathfrak{b}(r - n_m+ m- 1)\big).\]
       A completely analogous argument works for the second term in \eqref{eq:combinatorial2} involving the $\mathbf Q$, which proves the lemma.
 \end{proof}

\subsection{Relation to $b$-Hurwitz numbers}

By comparing the explicit form of the  operators $D_k$ with the cut-and-join equation of \cref{thm:rationalweight}, we can prove one of our main results -- the function $\mathcal Z^{\operatorname{red}}$  matches the tau function $\tau^{(\bb)}_G$  of weighted $b$-Hurwitz numbers with the weight $G(z) = \frac{\prod_{i=1}^r (P_i+ z) }{\prod_{i=1}^{r-2}(Q_i-z)} $, after an identification between the variables $x^1_k$ and $\tilde p_k$. By taking appropriate limits, we can obtain the weighted $b$-Hurwitz generating function for arbitrary rational weights.

\subsubsection{$\mathcal Z^{\operatorname{red}}$ as a $b$-Hurwitz generating function}
We have the following theorem.
\begin{theorem}\label{thm:tauZidentification}
	We have the following equality between the reduced partition function $\mathcal Z^{\operatorname{red}}$ and the weighted b-Hurwitz tau function $\tau_G^{(\bb)}$, for weight $ G(z) = \frac{\prod_{i=1}^r (P_i+ z) }{\prod_{i=1}^{r-2}(Q_i-z)} $:
	\[
		 \tau^{(\bb)}_G = \mathcal Z^{\operatorname{red}}\big|_{x^1_k = \frac{\tilde p_k}{k},\, \Lambda = t} \,.
	\]
\end{theorem}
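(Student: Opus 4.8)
The plan is to show that the series $\mathcal Z^{\operatorname{red}}\big|_{x^1_k=\tilde p_k/k,\ \Lambda=t}$ satisfies the cut-and-join PDE \eqref{eq:CutAndJoinGeneralRational} of \cref{thm:rationalweight} for the weight $G(z)=\frac{\prod_{i=1}^r(P_i+z)}{\prod_{i=1}^{r-2}(Q_i-z)}$ (so $n=r$ and $m=r-2$ in the notation there), that it has initial value $1+O(t)$, and that it belongs to $\Q(\QQ)[\PP,\tilde\pp,\bb](\!(\hbar)\!)\llbracket t\rrbracket$; the uniqueness part of \cref{thm:rationalweight} then identifies it with $\tau^{(\bb)}_G$. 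The first point to observe is how $x^1_k=\tilde p_k/k$ acts on the modes $\J^1_\bullet$ of \eqref{eq:Jrep}: for $k>0$ one gets $\J^1_k=\hbar\partial_{x^1_k}=\hbar k\,\partial_{\tilde p_k}$, for $k<0$ one gets $\J^1_k=\hbar(-k)x^1_{-k}=\hbar\tilde p_{-k}$ (the $\delta$-shift affects only $a=r$, and $r\ge2$), and $\J^1_0=Q_0=0$ by \textbf{S1} --- these are exactly the operators of \eqref{eq:J1def}. Since the combinatorial formula \eqref{eq:combinatorial2} expresses $D_k$ entirely through the modes $\J^1_\bullet$ and the constants $\PP,\QQ,\hbar\bb$, after the substitution and $\Lambda\mapsto t$ the $D_k$ become operators on $\Q(\QQ)[\PP,\tilde\pp,\bb](\!(\hbar)\!)\llbracket t\rrbracket$ built from the very same $\widetilde{\wt}$-weighted bridges that appear in \cref{thm:rationalweight}.

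The heart of the argument is to reassemble the family $\{D_k\}_{k\ge1}$ into the single cut-and-join operator. In any bridge $\gamma\in\Gamma^{\geq 0}_{(0,-1)\to(r+1,0)}$ the first step must go up, to $(1,k)$ with $k\ge1$, and carries the multiplication weight $\widetilde{\wt}(s_1)=\J^1_{-k}=\hbar\tilde p_k$; deleting $s_1$ and shifting left gives a weight-preserving bijection onto $\Gamma^{\geq 0}_{(0,k)\to(r,0)}$, because $\widetilde{\wt}$ is invariant under horizontal translation and the leading entry $0$ of $(0,\PP)$ is never read on a horizontal step of $\gamma$. Hence
\[
\sum_{\gamma\in\Gamma^{\geq 0}_{(0,-1)\to(r+1,0)}}\widetilde{\wt}(\gamma\,|(0,\PP))=\hbar\sum_{k\ge1}\tilde p_k\sum_{\gamma'\in\Gamma^{\geq 0}_{(0,k)\to(r,0)}}\widetilde{\wt}(\gamma'\,|\PP),
\]
and similarly $\sum_{\gamma\in\Gamma^{\geq 0}_{(0,-1)\to(r,-1)}}\widetilde{\wt}(\gamma\,|(0,-\QQ,0))=\hbar\sum_{k\ge1}\tilde p_k\sum_{\gamma'\in\Gamma^{\geq 0}_{(0,k)\to(r-1,-1)}}\widetilde{\wt}(\gamma'\,|(-\QQ,0))$. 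Comparing these with the two terms of \eqref{eq:combinatorial2} and using $(-1)^{r-1}=-(-1)^{r-2}$, the difference of the left- and right-hand sides of \eqref{eq:CutAndJoinGeneralRational} (with $t$ for $\Lambda$), applied to any series, equals $\hbar\sum_{k\ge1}\tilde p_k\,D_k\big|_{x^1_\bullet=\tilde p_\bullet/\bullet,\ \Lambda=t}$ applied to that series. Since part (a) of \cref{prop:Dkprops} gives $D_k\mathcal Z^{\operatorname{red}}=0$ for every $k\ge0$, this combination annihilates $\mathcal Z^{\operatorname{red}}\big|_{x^1_k=\tilde p_k/k,\ \Lambda=t}$, i.e.\ the latter solves \eqref{eq:CutAndJoinGeneralRational}.

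For the initial condition, \cref{prop:Fgnbehaviour} says each $F_{g,n}\big[\begin{smallmatrix}a_1 & \cdots & a_n \\ k_1 & \cdots & k_n\end{smallmatrix}\big]$ is a monomial in $\Lambda$ of degree $k_1+\cdots+k_n\ge n\ge1$, so $F_{g,n}\big|_{\Lambda=0}=0$ and thus $\mathcal Z^{\operatorname{red}}\big|_{t=0}=1$, which is unaffected by the substitution $x^1_k=\tilde p_k/k$. For the ambient ring, the substitution \eqref{eq:Lambdasub} writes each $L_i$ as a polynomial in $\PP,\QQ,\bb,\hbar$ (both $\mathsf V^i$, through \eqref{eq:Jrep} with $Q_0=0$, and the explicit product are of this form), so by \cref{cor:Fgnbehaviour} the reduced $F_{g,n}$ lie in $\Q(\QQ)[\PP,\bb,\hbar,\xx]\llbracket\Lambda\rrbracket$, of $\Lambda$-valuation $\ge n$ and $\hbar$-valuation $\ge 0$; after \textbf{S2}, $x^1_k=\tilde p_k/k$ and $\Lambda=t$, the exponent $\sum_{g,n}\hbar^{2g-2+n}F_{g,n}$ has, at each order in $t$, only bounded-below powers of $\hbar$ (the worst pole is $\hbar^{-1}$, from $g=0,n=1$), hence so does its exponential, and $\mathcal Z^{\operatorname{red}}\big|_{x^1_k=\tilde p_k/k,\ \Lambda=t}\in\Q(\QQ)[\PP,\tilde\pp,\bb](\!(\hbar)\!)\llbracket t\rrbracket$. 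The uniqueness in \cref{thm:rationalweight} now yields $\mathcal Z^{\operatorname{red}}\big|_{x^1_k=\tilde p_k/k,\ \Lambda=t}=\tau^{(\bb)}_G$ for $Q_1,\dots,Q_{r-2}$ pairwise distinct and nonzero (as the Airy structure requires), and hence for all $\QQ$ since both sides are rational in $\QQ$.

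The step I expect to be the main obstacle is the bookkeeping in the second paragraph: verifying that, once the initial $\J^1_{-k}$-step is re-attached and one sums over $k\ge1$, the bridge sums of \eqref{eq:combinatorial2} together with their $\PP$- and $\QQ$-dependent prefactors recombine into precisely the two path sums of \eqref{eq:CutAndJoinGeneralRational}, with the correct overall sign $(-1)^{r-2}=(-1)^r$. A secondary point to watch is that $D_k$ genuinely descends to an operator in the $\tilde\pp$-variables after \textbf{S2}, which is exactly \eqref{eq:DZred} established in the proof of \cref{prop:Dkprops}; beyond these, the argument is routine formal power-series manipulation.
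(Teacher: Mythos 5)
Your argument is essentially the paper's own proof: the paper forms the single operator $C=\sum_{k\geq 0}\J^1_{-k-1}D_k$, identifies it via the path-concatenation picture with the cut-and-join operator of \cref{thm:rationalweight} under $x^1_k=\tilde p_k/k$, $\Lambda=t$, and then invokes $D_k\mathcal Z^{\operatorname{red}}=0$ together with \cref{cor:Fgnbehaviour} and the uniqueness statement of \cref{thm:rationalweight}, exactly as you do (your extra checks of the ring membership and of $1+O(t)$ are just a more explicit version of the paper's citation of \cref{cor:Fgnbehaviour}). The only flaw is an off-by-one in your reassembly step: a first step of a bridge leaving $(0,-1)$ and landing at height $k\geq 0$ has increment $k+1$ and weight $\J^1_{-k-1}=\hbar\tilde p_{k+1}$, with the remainder a bridge in $\Gamma^{\geq 0}_{(0,k)\to(r,0)}$ (resp.\ $\Gamma^{\geq 0}_{(0,k)\to(r-1,-1)}$), so the correct combination is $\hbar\sum_{k\geq 0}\tilde p_{k+1}D_k$ rather than $\hbar\sum_{k\geq 1}\tilde p_k D_k$; with this indexing fixed the argument goes through verbatim.
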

\begin{proof}
	Let us prove that the function $\mathcal  Z^{\operatorname{red}} \big|_{x^1_k = \frac{\tilde p_k}{k}} $ is a solution to the cut-and-join equation derived in \cref{thm:rationalweight}. Indeed, consider the operator
	\[
		C = \sum_{k\geq 0} \J^1_{-k-1} D_k.
	\] 
	Note that $\J^1_{-k-1}$ is the weight of a segment that starts at $(-1,-1)$ and ends at $(0,k)$. We can concatenate this segment with the paths that appear in \eqref{eq:combinatorial2} for the operators $D_k$, and shift this combined path one step to the right to obtain the following formula for the operator $C$:
		\begin{equation*}\label{eq:Coperator}
			C = \Lambda \sum_{\gamma \in \Gamma^{\geq 0}_{(0,-1)\to (r+1,0)}} \widetilde{\wt}\left(\gamma| \left(0,\mathbf P \right)\right)  
			+ (-1)^{r-1} \sum_{\gamma \in \Gamma^{\geq 0}_{(0,-1)\to (r,-1)}}\widetilde{\wt}\left(\gamma|\left(0,-\mathbf Q,0\right) \right),
		\end{equation*} where we denote $\left(0,\mathbf P\right):= \left(0, P_1,\ldots, P_r \right)$ and $\left(0,- \mathbf Q ,0\right):= \left(0,- Q_1 ,\ldots,- Q_{r-2} ,0 \right)$. Part (a) of \cref{prop:Dkprops} implies that $C \mathcal  Z^{\operatorname{red}} = 0 $. In this form, it is easy to recognize $C$ as  the cut-and-join operator of \cref{thm:rationalweight} after the substitutions $x^1_k = \frac{\tilde p_k}{k}$ for $k >0$ and $\Lambda = t$.

	 Moreover, \cref{cor:Fgnbehaviour} gives that $\mathcal
         Z^{\operatorname{red}}\big|_{x^1_k = \frac{\tilde p_k}{k},
           \Lambda = t}$ is an element of $ \Q(\QQ)[\PP, \tilde{\pp}, \bb](\!( \hbar)\!)\llbracket
t\rrbracket$ of the form $1 + O(t)$. Such a solution of the cut-and-join equation is proved to be unique in \cref{thm:rationalweight}, and hence we get the statement of the theorem.\end{proof}

 \subsubsection{Arbitrary rational weights}\label{sec:Dlimits} In this
 section, we explain how one can get constraints for $\tau^{(\bb)}_G$
 for an arbitrary rational weight, which we fix to be $G(z) =
 \frac{\prod_{i=1}^n\left(P_i+
     z\right)}{\prod_{i=1}^m\left(Q_i-z\right)}$ (where $m$ is not
 necessarily equal to $n-2$) by taking appropriate limits of the parameters $\mathbf Q, \mathbf P$.

       \begin{theorem}\label{thm:taufromZ}
 	The generating function  $\tau^{(\bb)}_G$ of
        $b$-Hurwitz numbers weighted by $G(z) =
        \frac{\prod_{i=1}^n\left(P_i+
            z\right)}{\prod_{i=1}^m\left(Q_i-z\right)}  $, for any
        integers $n,m \geq 0$, can be obtained from the reduced
        partition function $ \mathcal Z^{\operatorname{red}}$ for $r = \max(m+2,n)$ as
        \[ 
        \tau^{(\bb)}_G = \lim_{\substack{Q_{m+1},\ldots,Q_{r-2} \to \infty,\\ P_{n+1},\ldots,
            P_{r}\to \infty}}\mathcal
          Z^{\operatorname{red}}\big|_{\Lambda=t\frac{Q_{m+1}\cdots Q_{r-2}}{P_{n+1}\cdots
              P_{r}} \,,\, x^1_k = \frac{\tilde p_k}{k}}.
              \]
        In addition, $\tau^{(\bb)}_G$ satisfies the following constraints
 		\begin{equation}\label{eq:splitcutandjoin}
 			t \sum_{\gamma \in \Gamma^{\geq 0}_{(0,k)\to (n,0)}} \widetilde{\wt}\left(\gamma| \left(\mathbf P \right)\right)  \tau^{(\bb)}_{G}
 			+ (-1)^{m+1} \sum_{\gamma \in \Gamma^{\geq 0}_{(0,k)\to (m+1,-1)}}\widetilde{\wt}\left(\gamma|\left(-\mathbf Q,0\right) \right)  \tau^{(\bb)}_{G} = 0
 		\end{equation}
 		for any $k \geq 0$, where we denote $\left( \mathbf P \right) = \left( P_1,\dots,  P_n \right)$ and  $\left(- \mathbf Q,0 \right) = \left(- Q_1,\dots, - Q_{m},0 \right)$. 
      \end{theorem}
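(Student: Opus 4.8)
The goal is to reduce an arbitrary rational weight $G(z) = \frac{\prod_{i=1}^n(P_i+z)}{\prod_{i=1}^m(Q_i-z)}$ to the special case handled by \cref{thm:tauZidentification}, namely the weight $\check G(z) = \frac{\prod_{i=1}^r(P_i+z)}{\prod_{i=1}^{r-2}(Q_i-z)}$ with $r = \max(m+2,n)$, by sending the ``extra'' parameters $P_{n+1},\dots,P_r$ and $Q_{m+1},\dots,Q_{r-2}$ to infinity. First I would observe that if $r = \max(m+2,n)$ then one of the two lists of extra parameters is empty (either $r = n \geq m+2$, in which case there are no extra $P_i$'s, or $r = m+2 > n$, in which case there are no extra $Q_i$'s), and the argument is symmetric, so it suffices to treat, say, the case $r = m+2$. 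Write $\widetilde G(z) = \frac{\prod_{i=1}^r(P_i+z)}{\prod_{i=1}^{r-2}(Q_i-z)}$ with $P_{n+1},\dots,P_r$ still free. The key analytic fact is that for a partition $\lambda$, $\prod_{\square\in\lambda}\widetilde G(\hbar\tilde c_\alpha(\square)) = \prod_{\square\in\lambda}G(\hbar\tilde c_\alpha(\square)) \cdot \prod_{j=n+1}^r\prod_{\square\in\lambda}(P_j + \hbar\tilde c_\alpha(\square))$, and since $\tilde c_\alpha(\square)$ takes finitely many values for $|\lambda|$ fixed, $P_j^{-|\lambda|}\prod_{\square\in\lambda}(P_j+\hbar\tilde c_\alpha(\square)) \to 1$ as $P_j\to\infty$. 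Hence, after the rescaling $\Lambda = t\cdot\prod_j P_j^{-1}$ built into the statement (which exactly soaks up the factor $\prod_j P_j^{|\lambda|}$ appearing when one tracks the power of $\Lambda$ weighting a diagram of size $|\lambda|$ — recall from \cref{prop:Fgnbehaviour} that each $F_{g,n}$-coefficient is homogeneous of degree $k_1+\cdots+k_n$ in $\Lambda$, and from \eqref{eq:TauBGWeight'} that $\Lambda^{|\lambda|}$ accompanies a diagram of size $|\lambda|$), the limit of $\tau^{(\bb)}_{\widetilde G}\big|_{\Lambda = t\prod_j P_j^{-1}}$ is $\tau^{(\bb)}_G(t)$, coefficient by coefficient in $t$.

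\textbf{Carrying it out.} Concretely I would argue order by order in $t$: fix $N\geq 0$ and look at $[t^N]$. By \eqref{eq:TauBGWeight'} this is a finite sum over $\lambda\vdash N$ of $\frac{J_\lambda^{(\alpha)}(\sqrt\alpha\tilde\pp)}{j_\lambda^{(\alpha)}}(\sqrt\alpha/\hbar)^N\prod_{\square\in\lambda}\widetilde G(\hbar\tilde c_\alpha(\square))$ divided by $(\prod_j P_j)^N$; each such term converges to the corresponding term of $[t^N]\tau^{(\bb)}_G(t)$ by the elementary limit above, because on the factor $\prod_{j=n+1}^r\big(P_j^{-N}\prod_{\square\in\lambda}(P_j+\hbar\tilde c_\alpha(\square))\big)$ each $P_j\to\infty$ independently gives $1$. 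Because \cref{cor:Fgnbehaviour} guarantees $F_{g,n}$ is a genuine polynomial in the $Q$'s-and-$P$'s-and-$\mathfrak b$ after the $L_i$-substitution (not merely a Laurent series), the limit $Q_{m+1},\dots,Q_{r-2}\to\infty$ — vacuous in the case $r=m+2$ — is likewise legitimate term by term; in the complementary case $r=n$ one runs the identical argument with the roles of $P$ and $Q$ interchanged, noting $\check G(z)$ picks up $\prod_{j=m+1}^{r-2}(Q_j-z)^{-1}$ and $Q_j^{N}\prod_{\square\in\lambda}(Q_j-\hbar\tilde c_\alpha(\square))^{-1}\to 1$. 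Combined with \cref{thm:tauZidentification} (which identifies $\mathcal Z^{\mathrm{red}}\big|_{x^1_k = \tilde p_k/k,\,\Lambda=t}$ with $\tau^{(\bb)}_{\widetilde G}$), this proves the displayed limit formula.

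\textbf{The constraints \eqref{eq:splitcutandjoin}.} For the system of PDEs I would start from \cref{prop:Dkprops}(a), which says $D_k\mathcal Z^{\mathrm{red}} = 0$ for all $k\geq 0$ for the weight $\widetilde G$, together with the combinatorial formula \eqref{eq:combinatorial2} for $D_k$. Under the substitution $x^1_k = \tilde p_k/k$, $\Lambda = t$ the operator $\Lambda^{-1}D_k$ becomes exactly $t\sum_{\gamma\in\Gamma^{\geq0}_{(0,k)\to(r,0)}}\widetilde{\wt}(\gamma|(\mathbf P)) + (-1)^{r-1}\sum_{\gamma\in\Gamma^{\geq0}_{(0,k)\to(r-1,-1)}}\widetilde{\wt}(\gamma|(-\mathbf Q,0))$ acting on $\tau^{(\bb)}_{\widetilde G}$, i.e.\ \eqref{eq:splitcutandjoin} with $n,m$ replaced by $r,r-2$. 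Now I take the same $P_j\to\infty$ (resp.\ $Q_j\to\infty$) limit with $\Lambda$ rescaled: on the $\tau$-side the limit gives $\tau^{(\bb)}_G$ by the first part; on the operator side, a bridge $\gamma\in\Gamma^{\geq0}_{(0,k)\to(r,0)}$ has $r$ steps, and in $\widetilde{\wt}(\gamma|(P_1,\dots,P_r))$ each flat step at position $j$ contributes $u_j - \hbar\bb\ell = P_j - \hbar\bb\ell$ — so after dividing by $\prod_{j=n+1}^r P_j$ (the $\Lambda$-rescaling) and sending $P_{n+1},\dots,P_r\to\infty$, the surviving terms are precisely those bridges whose flat steps occupy \emph{all} of positions $n+1,\dots,r$ (any bridge lacking a flat step at some such position contributes a bounded quantity divided by a factor blowing up), and collapsing those $r-n$ forced flat steps identifies $\Gamma^{\geq0}_{(0,k)\to(r,0)}$-bridges with $\Gamma^{\geq0}_{(0,k)\to(n,0)}$-bridges, each flat step at a former position $>n$ now contributing the limiting value $1$; this is the same bookkeeping as in the proof of \cref{thm:rationalweight} where the product of $(u_j - \hbar\bb\ell)$-weights is expanded as a sum over subsets of removed flat steps. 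The second sum behaves analogously. The result is exactly \eqref{eq:splitcutandjoin}.

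\textbf{Main obstacle.} The genuinely delicate point is justifying that the limits — both in the $\tau$-function identity and, more subtly, inside the operator $D_k$ acting on $\tau^{(\bb)}_{\widetilde G}$ — may be taken ``coefficient by coefficient in $t$'' and commute with the infinite sums over bridges of growing length; this rests on the polynomiality statement of \cref{cor:Fgnbehaviour} (so no negative powers of $P_j,Q_j$ appear after the $L_i$-substitution) and on the finiteness, at each fixed order in $t$, of the diagrams $\lambda$ and of the values of $\tilde c_\alpha(\square)$, which makes every limit a limit of a fixed finite expression. I expect the careful write-up of the subset-expansion bookkeeping in the operator limit (keeping track of which flat steps are forced, and the signs $(-1)^{r-1}$ vs.\ $(-1)^{m+1}$) to be the fiddly part, while the conceptual content is just ``$P^{-|\lambda|}\prod_\square(P+c) \to 1$''.
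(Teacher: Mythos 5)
Your proposal is correct, and it reaches the same destination as the paper but by more hands-on means. The paper never takes an honest analytic limit: it routes everything through the reparametrization \eqref{eq:RationalRelation}, i.e.\ it passes to the weight $\check G$ in which the extra parameters appear only through $P_j^{-1}, Q_j^{-1}$, so that "$P_j,Q_j\to\infty$ with $\Lambda$ rescaled" becomes the formal substitution $\check P_{n+1}=\cdots=\check P_r=\check Q_{m+1}=\cdots=\check Q_{r-2}=0$; for the constraints it then reuses, verbatim, the elementary-symmetric-function expansion $\sum_i e_i(\PP_r)\sum_{\gamma}\widetilde{\wt}(\gamma|0)$ already established in the proof of \cref{thm:rationalweight}, specialized at the extra check-parameters equal to zero, and concludes by \cref{thm:tauZidentification} and \eqref{eq:combinatorial2}. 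You instead take genuine coefficient-wise limits: on the $\tau$-side the elementary estimate $P_j^{-|\lambda|}\prod_{\square}(P_j+\hbar\tilde c_\alpha(\square))\to 1$ applied to the finite Jack expansion at each order of $t$, and on the operator side a direct survival analysis of bridges (only those with flat steps at all positions $n+1,\dots,r$ — necessarily at height $0$ — survive the division by $\prod_{j>n}P_j$, and collapsing them gives the $\Gamma^{\geq0}_{(0,k)\to(n,0)}$-bridges; dually for the $Q$-limit, where each forced flat step contributes $-1$ and the sign $(-1)^{r-1}(-1)^{r-2-m}=(-1)^{m+1}$ comes out right). Your justification that all limits are limits of fixed finite rational expressions (finitely many $\lambda$ and finitely many contributing bridges at each coefficient of $t,\tilde\pp,\hbar$, since the bridge length is fixed and step sizes are bounded by the extracted monomial) is exactly what makes this legitimate; the paper's $\check G$-detour buys shorter bookkeeping by recycling an identity it already proved, while your route is self-contained and makes the "degeneration of the spectral data" mechanism explicit. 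Two cosmetic slips, neither a gap: the operator matching \eqref{eq:splitcutandjoin} with $(n,m)$ replaced by $(r,r-2)$ is $D_k\big|_{\Lambda=t}$ itself (equivalently $t\cdot\Lambda^{-1}D_k\big|_{\Lambda=t}$), not $\Lambda^{-1}D_k$; and the bridges in $D_k$ have fixed length $r$ (resp.\ $r-1$) with unbounded step heights, not "growing length".
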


      \begin{proof}
This theorem follows from \eqref{eq:RationalRelation}. Indeed, let
$\check G(z)$ be the rational function associated with $G$ as in
\cref{rem:RationalRelation}, and let
$G'(z) := \frac{\prod_{i=1}^r\left(P_i+
            z\right)}{\prod_{i=1}^{r-2}\left(Q_i-z\right)}$, and
        $\check G'(z)$ be the associated function. Let $\PP_r :=
        (\PP,P_{n+1},\dots,P_r)$ and  $\QQ_{r-2} :=
        (\QQ,Q_{m+1},\dots,Q_{r-2})$. Then
        \begin{multline*}
          \tau^{(\bb)}_G(t,\PP,\QQ) = \tau^{(\bb)}_{\check G}( t\cdot \frac{P_1\cdots
          P_n}{Q_1\cdots
          Q_m}, \PP^{-1}, \QQ^{-1}) = \\
        =\tau^{(\bb)}_{\check G'}\left( t\cdot \frac{P_1\cdots
          P_n}{Q_1\cdots
          Q_m},
       \PP_r^{-1}, \QQ_{r-2}^{-1}\right)\bigg|_{\substack{Q^{-1}_{m+1},\ldots,Q^{-1}_{r-2}=0,\\P^{-1}_{n+1},\ldots,P^{-1}_{r}=0}}
        = \lim_{\substack{Q_{m+1},\ldots,Q_{r-2} \to \infty,\\ P_{n+1},\ldots,
            P_{r}\to \infty}}
          \tau^{(\bb)}_{G'}\left(t\frac{Q_{m+1}\cdots Q_{r-2}}{P_{n+1}\cdots
              P_{r}},\PP_r,\QQ_{r-2}\right)
        \end{multline*}
        which is equal to $\lim_{\substack{Q_{m+1},\ldots,Q_{r-2} \to \infty,\\ P_{n+1},\ldots,
            P_{r}\to \infty}}\mathcal
          Z^{\operatorname{red}}\big|_{\Lambda=t\frac{Q_{m+1}\cdots Q_{r-2}}{P_{n+1}\cdots
              P_{r}} \,,\, x^1_k = \frac{\tilde p_k}{k}}$ by
          \cref{thm:tauZidentification}.
          Similarly, $\tau^{(\bb)}_G$ satisfies \eqref{eq:splitcutandjoin} if
          and only if $\tau^{(\bb)}_{\check G}$ satisfies
          \eqref{eq:splitcutandjoin} after the change of
          variables as in \cref{rem:RationalRelation}. It was shown in the proof
          of~\cref{thm:rationalweight} that the latter is equal
          to
        \begin{multline*}\bigg[t\sum_{i=0}^n e_{i}(\PP)\sum_{\gamma \in \Gamma^{\geq 0}_{(0,k) \to
                  (i,0)}}\widetilde{\wt}(\gamma\,|0)- \sum_{i =0}^m e_{i}(-\QQ) \sum_{\gamma \in \Gamma^{\geq 0}_{(0,k) \to
                  (i+1,-1)}}\widetilde{\wt}(\gamma\,|0)\bigg]\tau^{(\bb)}_{\check G} = \\
              = \bigg[t\sum_{i=0}^r e_{i}(\PP_r)\sum_{\gamma \in \Gamma^{\geq 0}_{(0,k) \to
                  (i,0)}}\widetilde{\wt}(\gamma\,|0)- \sum_{i =0}^{r-2} e_{i}(-\QQ_{r-2}) \sum_{\gamma \in \Gamma^{\geq 0}_{(0,k) \to
                  (i+1,-1)}}\widetilde{\wt}(\gamma\,|0)\tau^{(\bb)}_{\check G'}\bigg]\bigg|_{\substack{Q_{m+1},\ldots,Q_{r-2} =0,\\ P_{n+1},\ldots,
            P_{r}=0}}
      \end{multline*}
      and we conclude as before by applying
      \cref{thm:tauZidentification}, and the expression \eqref{eq:combinatorial2} for the $D_k$.
        \end{proof}
 
This is a vast generalization of recent results that were limited to the cases $(n,m) = (0,1), (1,0),
 (2,0)$~\cite{BonzomChapuyDolega2023} and $(n,m) = (3,0)$~\cite{BonzomNador2023}. Notice that \cref{thm:taufromZ} proves a stronger set of constraint
 on the generating function $\tau^{(\bb)}_G$ as compared to
 \cref{thm:rationalweight}. Indeed multiplying  equation \eqref{eq:splitcutandjoin} on the left by $\J^1_{-k-1}$ and summing over $k \in \mathbb Z_{\geq 0} $ gives the cut-and-join equation proved in \cref{thm:rationalweight}.
 
 \begin{remark}\label{rem:AGT}
 	The  case of $G(z) = \frac{1}{\prod_{i=1}^{r-1}(Q_i - z)}$ can be treated without taking limits as in \cref{thm:taufromZ}. Instead, we could consider the  representation of $ \mathcal W^{\mathsf k}(\mathfrak{gl}_r)$ given by 
 	\begin{equation*}
 		\label{eq:Jrep'}
 		\begin{split}
 			\mathsf J^a_{k} = \left\{
 			\begin{array}{lr}
 				\hbar  \partial_{x^a_k}& k> 0,\\
 				\hbar (-k)x^{a}_{-k}  & k < 0, \\
 				Q_{a-1}-\hbar \mathfrak{b}(a-\delta_{a,1})  & a \in [r] ,k = 0\, 
 			\end{array}
 			\right.
 		\end{split}
 	\end{equation*} with $Q_0 = 0$ as before. Then  the Whittaker vector $\widehat{\mathcal Z}$ (whose existence and uniqueness follows from a slight modification of the Airy structure of \cite{BorotBouchardChidambaramCreutzig2024}) satisfying 
 	\[
 		\mathsf W^{i}_k \widehat{\mathcal Z }= \delta_{i,r}\delta_{k,1} t \widehat{\mathcal Z } \quad i \in [r], k \geq 1 \,,
 	\] is such that its reduction $\widehat{\mathcal Z }^{\operatorname{red}}$ (under the substitutions {\bf S1,S2}) matches the  generating function $\tau^{(\bb)}_G$ on the nose. We do not provide a proof, but the interested reader can  follow the combinatorial methods of the previous sections to derive this result. We mention this result as this Whittaker vector is a slight variant of the Gaiotto state that appears in the AGT correspondence \cite{AldayGaiottoTachikawa2010,SchiffmannVasserot2013, MaulikOkounkov2019, BorotBouchardChidambaramCreutzig2024}.
 \end{remark}
 
 \section{Topological recursion for weighted Hurwitz numbers}\label{sec:TR}
 In this section we prove that, when $\bb = 0$, rationally weighted Hurwitz numbers can be computed using   topological recursion \cite{EynardOrantin2007} on an associated spectral curve. This provides a totally different proof of one of the main theorems of \cite{BychkovDuninBarkowskiKazarianShadrin2024} using $\mathcal W$-algebra representations. Our proof relies on the  weighted $b$-Hurwitz interpretation of the Airy structure partition function $\mathcal Z$ proved in \cref{sec:bHurwitzfromZ}. Throughout this section, we set $\bb = 0$.

 \subsection{A spectral curve for $\mathcal Z$}\label{sec:spectralcurve}
 
 Let us give a very brief introduction to the topological recursion (TR) formalism.  TR takes as input data a \textit{spectral curve} $( \Sigma, x, \omega_{0,1}, \omega_{0,2}) $, where $\Sigma $ is a Riemann surface, $x$ is a  meromorphic function on $\Sigma$, $\omega_{0,1}$ is a meromorphic differential on $\Sigma$ and $\omega_{0,2}$ is a fundamental bidifferential on $\Sigma \times \Sigma$ (i.e., a symmetric meromorphic bidifferential, whose only poles consist of a double pole on the diagonal with biresidue 1). From an admissible\footnote{See \cite[Definition 2.5]{BorotBouchardChidambaramKramerShadrin2025} for a definition of admissibility.} spectral curve, TR\footnote{Throughout this paper, we understand topological recursion to mean the generalized topological recursion formula of \cite{BouchardHutchinsonLoliencarMeiersRupert2014, BouchardEynard2013} whenever $x$ has non-simple ramification points.} produces symmetric meromorphic differentials $ \left(\omega_{g,n} \right)_{g \in \mathbb  Z_{\geq 0}\,,\, n\in \mathbb Z_{\geq 1}} $  called \textit{correlators} on $\Sigma^{\times n}$, in the range $ 2g - 2 + n >0 $.  For  the explicit topological recursion formula,  various nice properties satisfied by the correlators and their appearance in many different contexts in geometry, see \cite{EynardOrantin2007,BouchardEynard2013}.

 We are interested in the  curve $\Sigma \subset \mathbb P^1_x \times \mathbb P^1_y $, cut out by the equation
 \begin{equation}\label{eq:curve}
 	\sum_{i=0}^r (-y)^{r-i}\left( \frac{e_i(P_1,\dots, P_r)}{(-x)^i}  +(-1)^r \frac{e_{i-1}(Q_0,Q_1,\dots, Q_{r-2})}{\Lambda \,x^{i-1}}\right) = 0,
 \end{equation} where we recall that $Q_0 $ has been set to $0$, but we keep using $Q_0$ for notational convenience. The curve $\Sigma$ admits a normalization $\tilde{\Sigma}$ of genus zero which has the following explicit parametrization (with uniformizing coordinate $z \in \mathbb P^1$)
 \begin{equation}\label{eq:algcurve}
 	x(z) = -\Lambda \frac{\prod_{i=1}^r (P_i +z)}{\prod_{i=0}^{r-2} (Q_i-z)},\qquad y(z) = - \frac{z \prod_{i=0}^{r-2} (Q_i-z)}{ \Lambda \prod_{i=1}^r (P_i + z)}.
 \end{equation} We also define $\omega_{0,1} = y dx$ and $\omega_{0,2}(z_1,z_2) = \frac{dz_1 dz_2}{(z_1-z_2)^2}$ to obtain the spectral curve $(\tilde{\Sigma}, x, \omega_{0,1}, \omega_{0,2} )$.  Notice that the fiber $  x^{-1}(\infty)$ is of rank $r$ and consists of the points $ z = Q_0,\ldots, Q_{r-2},\infty$. Then, we have the following statement of \cite[Theorem 2.3]{BorotChidambaramUmer2025}.
 \begin{theorem}\label{thm:tr1}
 	Consider the spectral curve $(\tilde{\Sigma}, x, \omega_{0,1}, \omega_{0,2} ) $ defined from \eqref{eq:algcurve}, and the corresponding correlators $\left(\omega_{g,n} (z_1,\ldots,z_n) \right)_{g \in \mathbb  Z_{\geq 0}\,,\, n\in \mathbb Z_{\geq 1}} $    constructed by topological recursion. The expansion coefficients of the correlators $\omega_{g,n}$ at $ x = \infty $ coincide with the $F_{g,n}$ of the partition function $\mathcal Z$ satisfying the $\mathcal W$-constraints \eqref{eq:Wconst}. More precisely, for $i \in [n]$, fix $1 \leq a_i \leq r $. Assuming that $z_i$ is near $ Q_{a_i-1}$ if $a_i \in [r-1]$ or that $z_i $ is near $\infty$ if $a_i = r$, we have  
 	\[
 		\frac{\omega_{g,n}(z_1,\dots, z_n)}{\dxz{1}\cdots \dxz{n}} - \frac{\delta_{g,0}\delta_{n,2}}{(x(z_1)-x(z_2))^2}  = \sum_{k_1,\ldots, k_n \geq 1} F_{g,n}\big[\begin{smallmatrix} a_1 & \cdots & a_n \\ k_1 & \cdots & k_n \end{smallmatrix}\big] \prod_{i=1}^n x(z_i)^{-k_i-1}\,,
 	\] under the following  assumptions that $\mathbf P, \mathbf Q$ are generic:
 	\begin{itemize}
 		\item $Q_0,\ldots, Q_{r-1}$ are pairwise distinct;
 		\item $x(z): \tilde{\Sigma} \to \mathbb P^1$ only has simple ramification points.
 	\end{itemize} 
 \end{theorem}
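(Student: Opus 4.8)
This is \cite[Theorem 2.3]{BorotChidambaramUmer2024}; we indicate the strategy one would follow. The plan is to show that the generating series assembled from the topological recursion correlators $\omega_{g,n}$ of the spectral curve \eqref{eq:algcurve} solves exactly the $\mathcal W$-constraint system \eqref{eq:Wconst} which, together with the exponential form prescribed by \cref{thm:pf}, uniquely determines the partition function $\mathcal Z$ of the shifted Airy structure of \cref{thm:shiftedAS}. Granting this, the two series coincide, and reading off Laurent coefficients at $x=\infty$ is precisely the asserted identification of the $F_{g,n}$.

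First I would make the geometry of \eqref{eq:algcurve} explicit. For generic $\mathbf P,\mathbf Q$ the map $x\colon\tilde\Sigma\to\mathbb P^1$ has degree $r$ with only simple ramification points, so ordinary Eynard--Orantin recursion \cite{EynardOrantin2007} applies, and the fibre $x^{-1}(\infty)$ consists of the $r$ simple points $z=Q_0,\dots,Q_{r-2},\infty$. Since $\bb=0$ (self-dual level $\mathsf k=1-r$) the defining equation \eqref{eq:curve} is the monic degree-$r$ polynomial in $y$ whose $y^{r-i}$-coefficient is, up to sign and a power of $x$, the elementary symmetric function $e_i(\mathbf P)$ together with the $\Lambda$-correction built from $e_{i-1}(\mathbf Q)$. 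Expanding each of the $r$ branches $y=y_a(x)$ in the local coordinate at the corresponding point of $x^{-1}(\infty)$ gives $r$ formal Laurent series whose nonnegative part is governed by the Heisenberg zero modes: one checks that their leading coefficients reproduce the values $\mathsf J^a_0$ of \eqref{eq:Jrep} at $\bb=0$, hence that the elementary symmetric combinations of the branches match the shift constants $\Omega_i$ of \eqref{eq:Lambdadef} at $\bb=0$. This is the classical-limit check identifying $\omega_{0,1}=y\,dx$ with the representation data in \eqref{eq:Jrep}.

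Second I would invoke the abstract loop equations of topological recursion \cite{BouchardEynard2013,BouchardHutchinsonLoliencarMeiersRupert2014}: the correlators, combined through the branches $y_a$, form quantities rational in $x$ with poles only at $x=\infty$, of a type dictated by $\omega_{0,1}$. Following the dictionary of \cite{BorotBouchardChidambaramCreutzigNoshchenko2018,BorotBouchardChidambaramCreutzig2021}, which is the correspondence between such loop equations and the modes of $\mathcal W^{1-r}(\mathfrak{gl}_r)$ under the quantum Miura realization \eqref{eq:miura}, these translate into the statement that the nonnegative modes $\mathsf W^i_k$ with $k\geq\delta_{i,1}$ annihilate the topological recursion partition function while the zero modes act by the scalars $\Omega_i$ computed in the first step. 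Packaging the correlators as $\exp\big(\sum_{g,n}\hbar^{2g-2+n}F_{g,n}\big)$, with $F_{g,n}$ read off from the $x=\infty$ expansion, then produces an object of the precise form of \cref{thm:pf} satisfying \eqref{eq:Wconst} with the same initial condition as $\mathcal Z$; by uniqueness it equals $\mathcal Z$, which is the theorem.

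The hard part is the second step: establishing the precise dictionary between the Bouchard--Eynard loop equations for this particular curve and the $\mathcal W^{1-r}(\mathfrak{gl}_r)$-modes, with exact matching of the $\hbar$-corrections. Even though $\bb=0$ simplifies the Miura transform, one must still track the quantum corrections encoded recursively in topological recursion and check that they match the subleading $\hbar$-terms of the operators $\widetilde{\mathsf W}^i_k$ (cf.\ \cref{lem:deg1}), and verify that the pole structure at $x=\infty$ forces exactly the nonnegative modes, and no more, to annihilate, with exactly the shift $\Omega_i\delta_{k,0}$. This verification is the technical core of \cite{BorotChidambaramUmer2024}.
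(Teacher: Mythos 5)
The paper does not prove this statement at all: it is imported verbatim as \cite[Theorem 2.3]{BorotChidambaramUmer2024}, which is exactly what you do, so your proposal takes essentially the same approach as the paper. Your sketch of that reference's strategy (match $\omega_{0,1}=y\,dx$ with the zero-mode data of \eqref{eq:Jrep} at $\bb=0$, translate the Bouchard--Eynard loop equations into the $\mathcal W$-constraints \eqref{eq:Wconst} via the Airy-structure dictionary, and conclude by the uniqueness of \cref{thm:pf}) is consistent with how that work proceeds, so nothing further needs to be compared here.
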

 
 We will view $(\tilde{\Sigma}, x, \omega_{0,1}, \omega_{0,2} )$ as a family of spectral curves over a base parametrized by $\mathbf P, \mathbf Q$ (see \cite[Section 5.1]{BorotBouchardChidambaramKramerShadrin2025} for a discussion on families of spectral curves). 
 Then, we will repeatedly use the result that the correlators are analytic in globally admissible families as proved in \cite[Theorem 5.8]{BorotBouchardChidambaramKramerShadrin2025}. Global admissibility of a family in our situation boils down to the verification of the three conditions \textit{(gA1)-(gA3)} given in Definition 4.9 of \textit{loc.cit.}

Putting together the above theorem and the $b$-Hurwitz interpretation of $\mathcal Z$ from \cref{sec:bHurwitzfromZ} proves that the $\omega_{g,n}$ are generating functions for certain weighted Hurwitz numbers.  We also use the results of \cite{BorotBouchardChidambaramKramerShadrin2025} to lift the assumptions on $\mathbf P, \mathbf Q$ being generic in \cref{thm:tr1}.
 
 \begin{proposition}\label{prop:TR1}
	Consider the spectral curve $(\tilde{\Sigma}, x, \omega_{0,1}, \omega_{0,2} ) $ defined from \eqref{eq:algcurve}, and corresponding correlators $\left(\omega_{g,n} (z_1,\ldots,z_n) \right)_{g \in \mathbb  Z_{\geq 0}\,,\, n\in \mathbb Z_{\geq 1}} $    constructed using the topological recursion formula. Assume that for all $i \in [n]$, $z_i $ is near $Q_0 = 0$. Under the identification $\Lambda = t$ we have 
	\[
	\frac{\omega_{g,n}(z_1,\dots, z_n)}{\dxz{1}\cdots \dxz{n}} - \frac{\delta_{g,0}\delta_{n,2}}{(x(z_1)-x(z_2))^2}  = 	\sum_{\mu_1,\dots,\mu_n \in \Z_{\geq
			1}}\HurGg(\mu_1,\dots,\mu_n)|\Aut(\mu)|\prod_{i=1}^n\mu_ix(z_i)^{-\mu_i-1}\,,
	\] for the weight $G(z) = \frac{\prod_{i=1}^r (P_i+ z) }{\prod_{i=1}^{r-2}(Q_i-z)}  $, where $\mathbf P, \mathbf Q \in \mathbb C^{*}$.
 \end{proposition}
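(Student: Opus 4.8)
The plan is to combine \cref{thm:tr1}, \cref{thm:tauZidentification}, and the analyticity of topological recursion in families to deduce the statement. First I would observe that the right-hand side of the desired identity is, by \cref{thm:tauZidentification} (with $\bb=0$), nothing but the generating function of the coefficients $F_{g,n}\big[\begin{smallmatrix} 1 & \cdots & 1 \\ k_1 & \cdots & k_n \end{smallmatrix}\big]$ of the reduced partition function $\mathcal Z^{\operatorname{red}}$, rewritten in the variable $x(z_i)$; indeed, after setting $x^1_k = \tilde p_k/k$ and $\Lambda = t$, the identity \eqref{eq:TauBGWeight'} together with $\mathcal Z^{\operatorname{red}} = \tau^{(\bb)}_G\big|_{x^1_k = \tilde p_k/k}$ expresses $\log \mathcal Z^{\operatorname{red}}$ via the $\HurGg(\mu_1,\dots,\mu_n)$. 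On the other hand, \cref{thm:tr1} with all $a_i = r$ — wait, no: the relevant case here is all $a_i$ corresponding to the point $Q_0 = 0$. Since in the representation \eqref{eq:Jrep} the role of $x^1_k$ is played by the mode $\J^1$, and the $a=1$ preimage of $x^{-1}(\infty)$ is precisely $z = Q_0 = 0$ (recall $Q_0$ has been set to $0$), \cref{thm:tr1} says that the expansion of $\omega_{g,n}$ near $z_i = 0$ produces exactly $F_{g,n}\big[\begin{smallmatrix} 1 & \cdots & 1 \\ k_1 & \cdots & k_n \end{smallmatrix}\big]$. Chaining these identifications gives the claimed formula.

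The one gap to fill is that \cref{thm:tr1} is stated only for \emph{generic} $\mathbf P, \mathbf Q$ — namely under the two bulleted assumptions that $Q_0, \dots, Q_{r-1}$ are pairwise distinct and that $x$ has only simple ramification — whereas \cref{prop:TR1} should hold for all $\mathbf P, \mathbf Q \in \mathbb C^*$. To remove these assumptions I would invoke \cite[Theorem 5.8]{BorotBouchardChidambaramKramerShadrin2023}: the correlators $\omega_{g,n}$ are analytic in a globally admissible family of spectral curves. Concretely, I would view the curve \eqref{eq:algcurve} as a family over the base parameterized by $(\mathbf P, \mathbf Q) \in (\mathbb C^*)^{r} \times (\mathbb C^*)^{r-2}$, verify the three conditions (gA1)--(gA3) of global admissibility for this family (this is the main technical step — one checks that the ramification structure degenerates only on a proper subvariety, that $\omega_{0,1}$ and $\omega_{0,2}$ extend, and that the relevant cycles/local coordinates vary holomorphically), and conclude that both sides of the identity in \cref{prop:TR1} are analytic functions of $(\mathbf P, \mathbf Q)$ on all of $(\mathbb C^*)^{r}\times(\mathbb C^*)^{r-2}$. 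Since they agree on the dense open locus where \cref{thm:tr1} applies, they agree everywhere by the identity theorem.

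I expect the verification of global admissibility (gA1)--(gA3) to be the main obstacle, or at least the only non-formal part: one must ensure that as $\mathbf P, \mathbf Q$ vary over $(\mathbb C^*)^\bullet$, no ramification point of $x$ collides with a pole of $\omega_{0,1}$ or escapes to a bad locus in a way that would violate admissibility of the TR construction, and that the family of normalizations $\tilde\Sigma$ stays a smooth genus-zero family — this is essentially automatic here since $\tilde\Sigma = \mathbb P^1$ with the explicit rational parameterization \eqref{eq:algcurve}, so the zeros and poles of $x$ are $-P_i$ and $Q_j$ respectively, which stay finite and nonzero, and $dx$ has its ramification points varying holomorphically away from a discriminant hypersurface. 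The remaining steps — the bookkeeping identification of $x^1_k \leftrightarrow \tilde p_k / k$, the translation between the $\mu_i$-indexed sum over Hurwitz numbers and the $k_i$-indexed sum of $F_{g,n}$ coefficients via \eqref{eq:TauBGWeight'}, and matching $|\Aut(\mu)|\prod \mu_i$ against the normalization in $\check J^{(1)}_\lambda = \check s_\lambda$ — are routine once one unwinds the definitions, and I would state them without belaboring the algebra.
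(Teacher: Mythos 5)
Your proposal follows essentially the same route as the paper's proof: chain \cref{thm:tr1} (with $a_i=1$, i.e.\ expansion near $Q_0=0$) with the $\bb=0$ specialization of \cref{thm:tauZidentification} to get the identity for generic $\mathbf P,\mathbf Q$, then remove the genericity assumptions by viewing \eqref{eq:algcurve} as a family of spectral curves and invoking the analyticity result of \cite[Theorem 5.8]{BorotBouchardChidambaramKramerShadrin2023} after verifying global admissibility (gA1)--(gA3). The only real difference is organizational: the paper treats the two genericity assumptions separately --- collisions of the $Q_i$ via the family argument (with the concrete check that the only singularity is $(x,y)=(\infty,0)$, local parameter $\bar s=0$, plus condition C-ii and non-resonance), and the non-simple-ramification locus by switching to the Bouchard--Eynard formula via \cite[Section 6.3.1]{BorotBouchardChidambaramKramerShadrin2023} --- whereas you fold both into a single identity-theorem argument over the $(\mathbf P,\mathbf Q)$-family and leave the admissibility verification (the only non-formal step, as you yourself note) unexecuted.
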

 \begin{proof}
 	\cref{thm:tr1} shows that  expanding the correlators $\omega_{g,n}$ such that  $z_1,\dots, z_n$ are near $Q_0 = 0$, gives the coefficients $F_{g,n}\big[\begin{smallmatrix} 1 & \cdots & 1 \\ k_1 & \cdots & k_n \end{smallmatrix}\big]$. These coefficients are precisely the ones that appear in the reduced partition function $\mathcal Z^{\operatorname{red}}$. Then, the specialization of \cref{thm:tauZidentification} to $\bb = 0$  proves that $\tau^{(0)}_G $ coincides with $\mathcal Z^{\operatorname{red}}$ for the weight $ G(z) = \frac{\prod_{i=1}^r (P_i+ z) }{\prod_{i=1}^{r-2}(Q_i-z)}  $, up to the identification $\Lambda = t$. More precisely, extracting the expansion coefficients of $\tau^{(0)}_G $ and $\mathcal Z^{\operatorname{red}}$ yields the following equality
 	\[
 		F_{g,n}\big[\begin{smallmatrix} 1 & \cdots & 1 \\ \mu_1 & \cdots & \mu_n \end{smallmatrix}\big] = \HurGg(\mu_1,\dots,\mu_n)|\Aut(\mu)|\prod_{i=1}^n\mu_i\,
 	\] which proves the statement of the proposition when $\mathbf P, \mathbf Q$ are generic.
 	
 	In order to remove the restriction that $Q_1,\ldots, Q_{r-2} $ are pairwise disjoint, we view our spectral curve defined using \eqref{eq:curve} as a family over the base $\left(\mathbb C^*\right)^{r-2}$ parametrized by $ Q_1,\dots, Q_{r-2}$. Then, we will show that this family is globally admissible to conclude that the  correlators $\omega_{g,n}$ are analytic  \cite[Theorem 5.8]{BorotBouchardChidambaramKramerShadrin2025}. 	If  $Q_1,\ldots, Q_{r-2} $ are pairwise disjoint, the curve $\Sigma$ from \eqref{eq:curve} is easily seen to be smooth, and hence automatically globally admissible. On the other hand, if the $Q_i$ are not pairwise disjoint, the curve acquires a single singular point at $x = \infty , y = 0$ and hence satisfies \textit{(gA1)}. At any preimage in $ \tilde{\Sigma} $ of this singular point, the local parameter is  $\bar{s} = 0$, and thus satisfies the local admissibility condition  \textit{(gA2)}. Finally for \textit{(gA3)}, condition \textit{C-ii} is always satisfied when $\bar{s} =0 $, and  checking the non-resonance condition for points with $\bar{s} = 0$ is a straightforward computation.
 	
 	The last restriction that $x(z)$ has simple ramification points, can be lifted by using the  Bouchard--Eynard topological recursion formula instead of the Eynard--Orantin topological recursion formula - see \cite[Section 6.3.1]{BorotBouchardChidambaramKramerShadrin2025} for a proof.
 \end{proof}
 
 \subsection{TR for arbitrary rational weights} Now we extend the results of the previous subsection to prove topological recursion for weighted Hurwitz numbers with arbitrary rational weights. We will take appropriate limits of the statement proved in \cref{prop:TR1}, using the results of \cite{BorotBouchardChidambaramKramerShadrin2025}. Consider the weight 
 \[
 	G(z) = \frac{\prod_{i=1}^p (P_i+z)}{\prod_{i=1}^q (Q_i-z)}
 \] where $p,q \geq 0$. Consider the curve $ S $  defined by 
 \begin{equation} \label{eq:curve2}
	x(z) =  t \frac{\prod_{i=1}^p (P_i +z)}{z \prod_{i=1}^{q} (Q_i-z)},\qquad y(z) = \frac{z^2 \prod_{i=1}^{q} (Q_i-z)}{ t \prod_{i=1}^p (P_i + z)}.
\end{equation}  As before, define $\omega_{0,1} = y dx$ and $\omega_{0,2}(z_1,z_2) = \frac{dz_1 dz_2}{(z_1-z_2)^2}$ to obtain the spectral curve $(S, x, \omega_{0,1}, \omega_{0,2} )$. 

\begin{theorem}\label{thm:TRmain}
	Consider the correlators $ \left(\omega_{g,n} (z_1,\ldots,z_n) \right)_{g \in \mathbb  Z_{\geq 0}\,,\, n\in \mathbb Z_{\geq 1}}  $ constructed using topological recursion on the spectral curve  $(S, x, \omega_{0,1}, \omega_{0,2} )$. When $z_1,\ldots,z_n$ are near $0$, we have 
	\[
		\frac{\omega_{g,n}(z_1,\dots, z_n)}{\dxz{1}\cdots \dxz{n}} - \frac{\delta_{g,0}\delta_{n,2}}{(x(z_1)-x(z_2))^2}  = 	\sum_{\mu_1,\dots,\mu_n \in \Z_{\geq
				1}}\HurGg(\mu_1,\dots,\mu_n)|\Aut(\mu)|\prod_{i=1}^n\mu_ix(z_i)^{-\mu_i-1}\,,
		\]  for the weight $G(z) = \frac{\prod_{i=1}^p (P_i+z)}{\prod_{i=1}^q (Q_i-z)}  $.
\end{theorem}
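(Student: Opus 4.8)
The plan is to deduce Theorem~\ref{thm:TRmain} from Proposition~\ref{prop:TR1} by a limiting argument in the parameters, exactly in the spirit of the limit appearing in Theorem~\ref{thm:taufromZ}. First I would set $r := \max(q+2,p)$ and compare the spectral curve $(S,x,\omega_{0,1},\omega_{0,2})$ of \eqref{eq:curve2} with the curve $(\tilde\Sigma,x,\omega_{0,1},\omega_{0,2})$ of \eqref{eq:algcurve} for the auxiliary weight $G'(z) = \frac{\prod_{i=1}^r(P_i+z)}{\prod_{i=1}^{r-2}(Q_i-z)}$: after the substitution $\Lambda = t\frac{Q_{q+1}\cdots Q_{r-2}}{P_{p+1}\cdots P_r}$ and sending $Q_{q+1},\dots,Q_{r-2}\to\infty$ and $P_{p+1},\dots,P_r\to\infty$, the parameterization \eqref{eq:algcurve} degenerates (up to an overall harmless sign and rescaling of the coordinate $z$) to \eqref{eq:curve2}. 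On the enumerative side, Theorem~\ref{thm:taufromZ} already tells us that $\tau^{(0)}_G$ is precisely this limit of $\tau^{(0)}_{G'} = \mathcal Z^{\operatorname{red}}\big|_{x^1_k = \tilde p_k/k}$, so the only thing left to establish is that topological recursion commutes with this limit of spectral curves.

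The key technical input is the analyticity of TR correlators in globally admissible families of spectral curves, \cite[Theorem 5.8]{BorotBouchardChidambaramKramerShadrin2023}, which was already used in the proof of Proposition~\ref{prop:TR1}. So the main steps are: (1) assemble the curves \eqref{eq:curve2} into a family over a base parameterized by $(\mathbf P,\mathbf Q)\in(\mathbb C^*)^{p}\times(\mathbb C^*)^{q}$ together with the ``large'' parameters $P_{p+1}^{-1},\dots,P_r^{-1},Q_{q+1}^{-1},\dots,Q_{r-2}^{-1}$ ranging in a neighborhood of $0$; (2) check that this family is globally admissible — i.e.\ verify conditions (gA1)--(gA3) of \cite[Definition 4.9]{BorotBouchardChidambaramKramerShadrin2023}, paying attention to what happens at the boundary point where the inverse parameters vanish (at that point the curve \eqref{eq:algcurve} genuinely degenerates, some ramification points run off to infinity, and one must confirm that no pole of $\omega_{0,1}$ collides with a ramification point in a way that violates admissibility or the non-resonance condition); (3) conclude by analyticity that $\lim_{P_{p+1},\dots,P_r\to\infty,\,Q_{q+1},\dots,Q_{r-2}\to\infty}\omega_{g,n}^{G'}$ equals the correlators $\omega_{g,n}^G$ of the curve \eqref{eq:curve2}; (4) expand both sides near $z_i=0$ and match with the corresponding limit of the generating series identity in Proposition~\ref{prop:TR1}, using that $\tau^{(0)}_{G'}\to\tau^{(0)}_G$ coefficient-wise. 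I should also re-run the two standard removals already invoked in Proposition~\ref{prop:TR1}: lifting the genericity/pairwise-distinctness of the $Q_i$ (again via global admissibility, the curve picks up at worst an isolated singular point at $x=\infty,y=0$ with local parameter $\bar s=0$ satisfying (gA2)), and passing from Eynard--Orantin TR to the Bouchard--Eynard formula when $x$ has non-simple ramification, citing \cite[Section 6.3.1]{BorotBouchardChidambaramKramerShadrin2023}.

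I expect the main obstacle to be step (2), the global-admissibility check at the degenerate boundary of the family. When $P_{p+1},\dots,P_r$ and $Q_{q+1},\dots,Q_{r-2}$ are all sent to infinity simultaneously, the degree of the map $x$ drops from $r$ to $\max(q+1,p)$ in a controlled way, but one must verify that the limiting curve \eqref{eq:curve2} still has a well-defined normalization of genus zero, that the ramification structure varies tamely (no ramification point escapes to a pole of $\omega_{0,1}$, no two ramification points with incompatible orders collide), and crucially the non-resonance condition \textit{(gA3)} — the latter is a genericity statement about the local exponents of $\omega_{0,1}$ at the ramification points, and near the boundary it requires a short but genuine computation. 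Once global admissibility is in hand, the rest is bookkeeping: the identity of generating series, the substitution $x_i(z)^{\mu_i-1}\,\mathsf d x(z_i)$ versus $x(z_i)^{-\mu_i-1}\,\mathsf d x(z_i)$ (note $x(z) = z/G(z)$ here, consistent with the normalization in Theorem~\ref{theo:TRRational} after inverting $z\mapsto 1/z$ and relabeling $\mathbf P\leftrightarrow\mathbf P^{-1}$, $\mathbf Q\leftrightarrow\mathbf Q^{-1}$ as in \cref{rem:RationalRelation}), and the fact that $|\Aut(\mu)|$ and the factors $\mu_i$ appear identically on both sides.
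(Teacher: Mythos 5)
Your route is essentially the paper's: deduce \cref{thm:TRmain} from \cref{prop:TR1} by a limit in the parameters (with the same substitution for $\Lambda$ as in \cref{thm:taufromZ}), and justify exchanging the limit with topological recursion via the analyticity of correlators in globally admissible families, \cite[Theorem 5.8]{BorotBouchardChidambaramKramerShadrin2023}, together with the same two removals (non-distinct $Q_i$'s and non-simple ramification). Since $r=\max(q+2,p)$, your ``single'' limit in fact splits into the same two effective cases the paper treats: $q<p-2$ (only extra $Q$'s) and $q>p-2$ (only extra $P$'s).

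There is, however, one concrete point your plan glosses over, precisely in the case $q>p-2$. There the limit of the plane curve \eqref{eq:curve} (with $r=q+2$, $\Lambda=t/(P_{p+1}\cdots P_{q+2})$) is \emph{not} the curve $S$ of \eqref{eq:curve2}: the defining polynomial keeps degree $q+2$ in $y$ while the degree of $x$ on the relevant component drops to $\max(p,q+1)$, so the boundary fiber becomes \emph{reducible}, acquiring an extra component $y=\infty$ (in homogeneous coordinates $Y_1=0$). Your statement that the parameterization \eqref{eq:algcurve} ``degenerates to \eqref{eq:curve2}'' is fine pointwise on the normalization, but the family you would feed into \cite[Theorem 5.8]{BorotBouchardChidambaramKramerShadrin2023} has this reducible fiber, and you must argue that the spurious component does not contribute to topological recursion and can be discarded; the paper does exactly this by taking the limit in two stages (first $P_{q+2}^{-1}\to 0$, discard the $Y_1=0$ component by \cite[Section 6.2.2]{BorotBouchardChidambaramKramerShadrin2023}, then send the remaining $P^{-1}$'s to $0$) before invoking admissibility, now with singular point $(x,y)=(0,\infty)$ rather than $(\infty,0)$. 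With that step added, the rest of your outline (admissibility with local parameter $\bar s\leq 0$ at the new preimage, non-resonance, Bouchard--Eynard for non-simple ramification, and coefficientwise convergence $\tau^{(0)}_{G'}\to\tau^{(0)}_G$) coincides with the paper's argument.
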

\begin{proof} The case of $q = p-2  $ is precisely the content of \cref{prop:TR1} for $r = p$. We  need to address the remaining cases $ 	q> p-2$ and $ q <p-2$ separately. 
	
	\textit{The case  $ 	q < p-2 $: } Consider the curve
        \eqref{eq:curve} with the parameter $r = p $,  set $\Lambda  =
        t Q_{q+1} \cdots Q_{p-2}$, and denote the resulting curve by
        $\Sigma_1$. The normalization of the curve obtained in  the
        limit $Q^{-1}_{q+1}, \ldots, Q^{-1}_{p-2} \to 0 $ of
        $\Sigma_1$ is precisely the curve $ S$.  Thus, if the
        correlators $\omega_{g,n}$ were analytic,  applying the above
        limit  to  \cref{prop:TR1} for the curve
        $(\tilde{\Sigma_1},x,\omega_{0,1},\omega_{0,2})$ would yield
        the statement. The analyticity for Hurwitz numbers is
        immediate from the definition, as explained in the proof of
        \cref{thm:taufromZ}. 
	
	 Let us prove the analyticity of the correlators $\omega_{g,n}$ using  \cite[Theorem 5.8]{BorotBouchardChidambaramKramerShadrin2025}. To apply this theorem, we need to check that the  spectral curve $(\tilde{\Sigma}_1,x,\omega_{0,1},\omega_{0,2})$ viewed as  a family over the base $\mathbb C^{p-q-2}$ parametrized by $Q^{-1}_{q+1}, \ldots, Q^{-1}_{p-2}$ is  globally admissible. If all the $Q^{-1}_{q+1}, \ldots, Q^{-1}_{p-2} \neq 0$, the curve is globally admissible as proved in the last paragraph of the proof of \cref{prop:TR1}. If some of the $ Q^{-1}_{q+1}, \ldots, Q^{-1}_{p-2}$ go to $0$,  the only possible singularity of $\Sigma_1$ is still $(x,y) = (\infty,0)$ and thus condition \textit{(gA1)} is satisfied. However, the structure of the singularity changes -- the point  $z = 0$ in $\tilde{\Sigma}_1$ is now a preimage of this singular point.  At $z=0,$ the local parameter $\bar{s} = -1$, and the local admissibility condition \textit{(gA2)} is still satisfied. Finally for condition $ \textit{(gA3)}$, condition $C-ii$ is satisfied automatically when $\bar{s} \leq  0$ (and the non-resonance for points with $\bar{s} = 0$ is again an easy calculation).
	
	\textit{The case  $ 	q > p-2 $: } Consider the curve \eqref{eq:curve} with the parameter $r = q+2 $,  set $\Lambda  = t/( P_{p+1} \cdots P_{q+2})$, and denote the resulting curve by $\Sigma_2$. First, let us take the limit $  P^{-1}_{q+2} \to 0$. In this limit, the curve becomes reducible as is evident using the homogeneous coordinates $[X_0:X_1] = x$ and $[Y_0:Y_1]= y$,
		\begin{equation*}
		Y_1 \left(  \sum_{i=1}^{q+2} Y_1^{i-1} (-Y_0)^{q+2-i} X_1^{i-1} X_0^{q+2-i} \left(X_1\frac{e_{i-1}(P_1,\ldots, P_{q+1})}{P_{p+1}\cdots P_{q+1}} +(-1)^q X_0 \frac{e_{i-1}(Q_0,Q_1,\dots, Q_{q})}{t}  \right) \right) = 0\,.
	\end{equation*}  However, the extra component $Y_1 = 0$ (or $y =\infty$) does not contribute to topological recursion and can be discarded as explained in \cite[Section 6.2.2]{BorotBouchardChidambaramKramerShadrin2025}.  Finally, if we send the remaining parameters $ P^{-1}_{p+1} ,\ldots, P^{-1}_{q+1} \to 0 $ we recover the spectral curve $S$ of interest, and we can apply the analyticity result of \cite[Theorem 5.8]{BorotBouchardChidambaramKramerShadrin2025} to finish the proof. As the verification of  global admissibility is analogous to the previous case (the only difference being that $(x,y) = (0,\infty)$ is the singular point), we omit this.
\end{proof} 
The above theorem first appeared in \cite{BychkovDuninBarkowskiKazarianShadrin2024} and generalizes the results of \cite{AlexandrovChapuyEynardHarnad2020}. Here, we give a completely new proof using $\mathcal W$-algebra representations.

 \begin{remark}\label{rem:scconventions}
 	Typically, in papers relating topological recursion and weighted Hurwitz theory such as \cite{BychkovDuninBarkowskiKazarianShadrin2024,AlexandrovChapuyEynardHarnad2020},  \cref{thm:TRmain} is formulated for the following slightly different spectral curve:
 	\[
 	\tilde{x}(z) =   \frac{z \prod_{i=1}^{q} (Q_i-z)}{ t \prod_{i=1}^p (P_i +z)}, \qquad \tilde{y}(z) = \frac{ t \prod_{i=1}^p (P_i + z)} { \prod_{i=1}^{q} (Q_i-z)}.
 	\] Then,  Hurwitz numbers are obtained by expanding the corresponding correlators $\tilde{\omega}_{g,n}$ at $z= 0$ (thus, $\tilde{x} = 0$) in powers of $\tilde{x}$ (as we have presented in the introduction \cref{theo:TRRational}). However, notice that $\tilde{x} = 1/x$ and $\tilde{y} d \tilde{x} = -y dx$ where $ x,y$ define the curve $S$ \eqref{eq:curve2}. The  correlators $\tilde{\omega}_{g,n}$ and $\omega_{g,n}$ only differ by a sign $(-1)^n$ which is cancelled by the signs coming from $\frac{d \tilde{x} }{\tilde{x} }= - \frac{d x}{x} $ in the expansion.
 \end{remark}

\bibliographystyle{amsalpha}

\bibliography{biblio2015}

\end{document}